\documentclass{article}

\usepackage{
    amsfonts,
    amsmath,
    amssymb,
    amsthm,
    accents, 
    enumitem,
    float,
    graphicx,
    mathrsfs,
    shuffle,
    stmaryrd,
    subcaption,
    tikz,
    ytableau
}

\usepackage[margin=2.5cm]{geometry}

\setlist[enumerate]{label=\textup{(\arabic*)}}

\newtheorem{theorem}{Theorem}[section]
\newtheorem{proposition}[theorem]{Proposition}

\theoremstyle{definition}

\newtheorem{remark}[theorem]{Remark}

\usepackage[colorinlistoftodos]{todonotes}

\definecolor{melon}{rgb}{0.99,0.72,0.67}
\definecolor{cornflower}{rgb}{0.66,0.77,0.95}
\definecolor{lightgreen}{rgb}{0.67, 0.99, 0.67}
\definecolor{aqua}{rgb}{0, 1, 1}

\renewcommand{\operatorname}{\mathsf}
\renewcommand{\emptyset}{\varnothing}

\newcommand{\sym}{\mathfrak{S}}

\newcommand{\Sym}{\operatorname{Sym}}
\newcommand{\QSym}{\operatorname{QSym}}
\newcommand{\sSym}{\operatorname{sSym}}
\newcommand{\sQSym}{\operatorname{sQSym}}
\newcommand{\FQSym}{\operatorname{FQSym}}
\newcommand{\NCSym}{\operatorname{NCSym}}
\newcommand{\NCQSym}{\operatorname{NCQSym}}
\newcommand{\sNCSym}{\operatorname{sNCSym}}
\newcommand{\sNCQSym}{\operatorname{sNCQSym}}
\newcommand{\sFQSym}{\operatorname{sFQSym}}

\newcommand{\Asc}{\operatorname{Asc}}
\newcommand{\Cut}{\operatorname{Cut}}
\newcommand{\Des}{\operatorname{Des}}
\newcommand{\QSh}{\operatorname{QSh}}
\newcommand{\SSh}{\operatorname{SSh}}
\newcommand{\GDes}{\operatorname{GDes}}
\newcommand{\sComp}{\operatorname{sComp}}

\newcommand{\ind}{\operatorname{ind}}
\newcommand{\inv}{\operatorname{inv}}
\newcommand{\len}{\operatorname{len}}
\newcommand{\sgn}{\operatorname{sgn}}
\newcommand{\std}{\operatorname{std}}

\renewcommand{\min}{\operatorname{min}}
\renewcommand{\max}{\operatorname{max}}

\newcommand{\dbrack}[1]{[\kern-0.115em[#1]\kern-0.121em]}
\newcommand{\dangle}[1]{\langle\kern-0.18em\langle #1\rangle\kern-0.18em\rangle}
\newcommand{\ubar}[1]{\underaccent{\bar}{#1}}
\def\blue{\color{blue}}\def\red{\color{red}}

\newcommand{\id}{\operatorname{id}}

\newcommand{\Q}{\mathbb{Q}}
\newcommand{\N}{\mathbb{N}}
\newcommand{\Z}{\mathbb{Z}}

\newcommand{\M}{\mathcal{M}}

\title{On quasisymmetric functions in superspace}

\author{
    Diego Arcis\footnote{Departamento de Matem\'aticas, Universidad de La Serena, Cisternas 1200 -- 1700000 La Serena, Chile (\texttt{diego.arcis@userena.cl}).}
\and
    Camilo Gonz\'alez\footnote{Departamento de Matem\'atica, Universidad de Concepci\'on, Casilla 160-C -- 4030000 Concepci\'on, Chile (\texttt{camgonzalezp@udec.cl}).}
\and
    Sebasti\'an M\'arquez\footnote{Departamento de Matem\'aticas, Universidad Aut\'onoma de Chile, Pedro de Valdivia 425 -- 7500000 Providencia, Chile (\texttt{sebastian.marquez@uautonoma.cl}).}
}

\date{}

\usepackage{hyperref}

\begin{document}

\maketitle

\begin{abstract}
Quasisymmetric functions in superspace were introduced as a natural extension of classical quasisymmetric functions involving both commuting and anticommuting variables. In this paper, we first provide a characterization of the algebra of quasisymmetric functions in superspace as an algebra of invariants under a quasisymmetrizing action of the symmetric group. Furthermore, we complete the superspace analogue of the classical hierarchy of combinatorial Hopf algebras by introducing the algebra of quasisymmetric functions in noncommuting variables in superspace. We endow this algebra with a Hopf superalgebra structure and thoroughly investigate its $Q$-basis and monomial basis, which are indexed by set supercompositions. By restricting to the minimal elements of the underlying poset, we construct the Hopf superalgebra of superpermutations, serving as the superspace analogue of the Malvenuto--Reutenauer algebra. We provide explicit product and coproduct formulas for these bases in terms of super-shuffles and global descents. Finally, via an abelianization morphism, we apply these noncommutative structures to derive a product formula for fundamental quasisymmetric functions in superspace.
\end{abstract}

\tableofcontents

\section{Introduction}

The interplay between symmetric functions, their generalizations, and combinatorial Hopf algebras has been a central theme in algebraic combinatorics~\cite{Mac99,GrRe20,AgBeSo06}. Classical symmetric functions and quasisymmetric functions are fundamental in the study of symmetric groups and representation theory~\cite{Mac99,Sa01}. Their noncommutative analogues, the algebras of symmetric functions in noncommuting variables and noncommutative quasisymmetric functions, have further deepened this connection, revealing rich underlying partial orders and basis transformations~\cite{Wol36,RoSa06,BeZa09}. Central to this hierarchy lies the Malvenuto--Reutenauer Hopf algebra of permutations~\cite{MaRe95,AgSo02}, which serves as a terminal object governing the product and coproduct rules of these structures through permutations and the weak Bruhat order.

In recent years, the classical framework has been extended to superspace, motivated by theories in physics involving both commuting and anticommuting variables~\cite{DeLaMa06}. Following the development of Jack polynomials in superspace~\cite{DeLaMa03,DeLaMa04}, the algebra of symmetric functions in superspace $\sSym$ was rigorously formalized and has since revealed a remarkably rich combinatorial structure~\cite{DeLaMa06}. Many fundamental features of the classical theory admit natural extensions to this setting, including analogues of Schur functions and Macdonald polynomials, together with their associated combinatorics such as Pieri rules and triangularity properties~\cite{BlDeLaMa12,JoLa17,GaJoLa19}. In parallel, important developments have emerged in related directions, notably the study of coinvariant rings in superspace, which has been actively investigated in recent years~\cite{RhoWi24,AnCoKaMuRho25}.

Subsequently, Fishel, Lapointe, and Pinto~\cite{FiLaPi19} introduced the algebra of quasisymmetric functions in superspace $\sQSym$, uncovering a combinatorial framework governed by dotted compositions and a superspace analogue of the fundamental basis. Furthermore, from a categorical perspective, it has been shown that $\sQSym$ plays a universal role as a terminal object in the category of combinatorial Hopf superalgebras~\cite{HaYaYa25}. Recently, the symmetric functions in noncommuting variables in superspace $\sNCSym$ were also introduced~\cite{ArGoMa25}. However, the noncommutative quasisymmetric counterpart, as well as the overarching superspace analogue of the Malvenuto--Reutenauer algebra, remained unexplored.

In this paper, we fill this gap by completing the superspace hierarchy of combinatorial Hopf algebras. We formally introduce the algebra of quasisymmetric functions in noncommuting variables in superspace $\sNCQSym$ and construct the Hopf superalgebra of free quasisymmetric functions in superspace $\sFQSym$. Our approach closely mirrors the robust algebraic framework of classical $\FQSym$, generalizing it to accommodate the parity conditions imposed by fermionic variables.\medskip

We summarize here the main results of the paper.

\medskip

Let $x=(x_1,x_2,\ldots)$ and $\theta=(\theta_1,\theta_2,\ldots)$ be two sets of variables satisfying the superspace commutativity relations: $x_ix_j=x_jx_i$, $x_i\theta_j=\theta_jx_i$, and $\theta_i\theta_j=-\theta_j\theta_i$ for all $i,j$. A formal power series $f(x,\theta)$ is defined as \emph{quasisymmetric in superspace} if any two monomials appearing in $f$ with the same relative ordering in both sets of variables simultaneously have identical coefficients (Subsection~\ref{003}). The algebra of such functions, denoted by $\sQSym$, is indexed by dotted compositions. We extend Hivert's quasisymmetrizing action to the superspace setting by introducing \emph{pseudo-dotted compositions}. This framework allows for a description of simple transpositions that respects the relative ordering of variables, leading to the following invariant-theoretic characterization.

\begin{proof}[{\bf Proposition~\ref{031}}]\emph{
A formal power series $f\in\Q^\theta\dbrack{x}$ belongs to $\sQSym$ if and only if it is invariant under the quasisymmetrizing action in superspace, that is, $\sigma f=f$ for all $\sigma\in\sym_\infty$.
}\end{proof}

We then introduce $\sNCQSym$, the algebra of quasisymmetric functions in noncommuting variables in superspace. This construction generalizes the notion of packed words and characterizes quasisymmetry through superspace standardization. Elements of $\sNCQSym$ are naturally indexed by set supercompositions.

\begin{proof}[{\bf Theorem~\ref{032}}]\emph{
The algebra $\sNCQSym$ carries a natural structure of a graded Hopf superalgebra, with product and coproduct rules governed by quasi-shuffle operations in superspace.
}\end{proof}

The combinatorial structure of $\sNCQSym$ is governed by set supercompositions, which generalize the role of set compositions in the classical theory. Within this framework, we identify a distinguished class of minimal elements under a natural partial order, which we term \emph{superpermutations}. These objects generate a superspace analogue of the Malvenuto--Reutenauer algebra.

\begin{proof}[{\bf Proposition~\ref{033}}]\emph{
The subspace $\sFQSym\subset\sNCQSym$ spanned by the functions $\{Q_I\}$, where $I$ ranges over superpermutations, is a sub-Hopf superalgebra. This structure serves as the superspace analogue of the algebra of free quasisymmetric functions.
}\end{proof}

Finally, we establish a fundamental connection between the noncommutative and commutative settings via the abelianization morphism $\pi:\sNCQSym\to\sQSym$.

\begin{proof}[{\bf Theorem~\ref{002}}]\emph{
The natural abelianization morphism maps the $\{Q_I\}$ basis of $\sFQSym$ onto the fundamental basis of $\sQSym$. Consequently, the super-shuffle product in the noncommutative setting projects directly onto the product formula for fundamental quasisymmetric functions in superspace.
}\end{proof}

\medskip

The paper is organized as follows. In Section~\ref{021}, we review the preliminary notions of Hopf superalgebras and our conventions regarding superspace alphabets.

In Section~\ref{022}, we revisit the algebra of quasisymmetric functions in superspace $\sQSym$ (Subsection~\ref{003}). Extending the classical work of Hivert~\cite{Hi00}, we define a quasisymmetrizing action of the finitary symmetric group $\sym_\infty$ on superspace monomials. We prove that $\sQSym $ is precisely the algebra of invariants under this action, tightly linking the commutative superspace variables with group actions (Subsection~\ref{034}). We also recall the basis of fundamental quasisymmetric functions in superspace $L_\alpha$ (Subsection~\ref{035}).

In Section~\ref{023}, we introduce the main object of study: the algebra $\sNCQSym$ (Subsection~\ref{036}). We show that $\sNCQSym$ naturally inherits a Hopf superalgebra structure (Subsection~\ref{038}). Similar to $\sQSym$, we demonstrate that $\sNCQSym$ can be characterized as an invariant algebra under a noncommutative analogue of the quasisymmetrizing action (Subsection~\ref{037}). Furthermore, we formally position $\sNCSym$ as a sub-Hopf superalgebra within this new space (Subsection~\ref{039}).

Section~\ref{024} constitutes the core combinatorial machinery of the paper. We introduce a new basis for $\sNCQSym$, the $Q$-basis, indexed by set supercompositions. By defining a suitable partial order on set supercompositions that generalizes the classical refinement order (Subsection~\ref{005}), we study the product and coproduct rules for this basis, which naturally involve the combinatorics of super-shuffles (Subsection~\ref{040}). Restricting our attention to the minimal elements of this poset, which we term superpermutations, we define $\sFQSym$ as the sub-Hopf superalgebra generated by these elements. We define the monomial basis $\M$ for $\sFQSym$ via M\"{o}bius inversion on the super left weak order, and provide formulas for its product and coproduct utilizing global descents (Subsection~\ref{041}). Finally, we exploit the natural abelianization morphism from noncommuting to commuting variables to bridge our new noncommutative structures with the existing commutative ones. We show how the super-shuffle product of the $Q$-basis perfectly projects onto the product of fundamental quasisymmetric functions $L_\alpha$, providing a transparent combinatorial formula for their multiplication (Subsection~\ref{025}).

\section{Preliminaries}\label{021}

\subsection{Notations and conventions}

Throughout this paper, the word ``algebra'' refers to an associative unitary algebra over the field of rational numbers $\Q$. We denote the set of positive integers by $\N=\{1,2,\ldots\}$ and the set of nonnegative integers by $\N_0=\N\cup\{0\}$. For $n\in\N$, we define the interval $[n]=\{1,\ldots,n\}$ and $[n]_0=[n]\cup\{0\}$. If $a$ is a finite sequence, its length is denoted by $\ell(a)$. We also define the set of dotted nonnegative integers as $\dot{\N}_0=\{\dot{a}\mid a\in\N_0\}$.

Given a sequence $I=(I_1,\ldots,I_k)$ of subsets of $\N_0$, its \emph{standardization} $\std(I)$ is the sequence obtained from $I$ by replacing its nonzero elements with those of $[n]$ via the unique order-preserving bijection $(I_1\cup\cdots\cup I_k)\setminus\{0\}\to[n]$, where $n=|(I_1\cup\cdots\cup I_k)\setminus\{0\}|$.

\subsection{Hopf superalgebras}

For an algebra $A$, the \emph{product} of $A$ is the linear map $m:A\otimes A\to A$ defined by $m(a\otimes b)=ab$, and the \emph{unit} of $A$ is the linear map $\iota:\Q\to A$ defined by $\iota(k)=k1$. These maps satisfy the associativity condition $m(m\otimes\id)=m(\id\otimes m)$ and the unit axioms $m(\id\otimes\iota)=\id=m(\iota\otimes\id)$.

A \emph{superalgebra} is an algebra $A$ equipped with a $\Z_2$-grading; that is, $A$ admits a direct sum decomposition $A=A_0\oplus A_1$ such that $A_iA_j\subseteq A_{i+j}$ for all $i,j\in\Z_2$. Elements of $A_0$ are called \emph{even}, and those of $A_1$ are called \emph{odd}. Specifically, an element $a\in A_i$ is said to be \emph{homogeneous} of \emph{parity} $|a|:=i$.

Given superalgebras $A$ and $B$, the tensor product space $A\otimes B$ is naturally equipped with a superalgebra structure, called the \emph{super tensor product} of $A$ with $B$, with multiplication defined on homogeneous elements by $(a\otimes b)\cdot(c\otimes d)=(-1)^{|b||c|}ac\otimes bd$, and identity $1\otimes 1$. The $\Z_2$-grading is given by $(A\otimes B)_0=(A_0\otimes B_0)\oplus(A_1\otimes B_1)$ and $(A\otimes B)_0=(A_0\otimes B_1)\oplus(A_1\otimes B_0)$.

A map $\psi:A\to B$ between superalgebras is called \emph{even} (resp. \emph{odd}) if $\psi(A_i)\subseteq B_i$ (resp. $\psi(A_i)\subseteq B_{i+1}$) for each $i\in\Z_2$. In particular, $\Q$ can be regarded as a superalgebra with the trivial $\Z_2$-grading $\Q_0=\Q$ and $\Q_1=\{0\}$. Consequently, for any superalgebra $A$, the product $m:A\otimes A\to A$ and the unit $\iota:\Q\to A$ are even linear maps, where $A\otimes A$ is endowed with the super tensor product superalgebra structure.

A \emph{superbialgebra} is a superalgebra $H$ together with even linear maps $\Delta:H\to H\otimes H$ and $\varepsilon:H\to\Q$, called \emph{coproduct} and \emph{counit}, respectively, such that the following conditions hold:\begin{gather*}
(\id\otimes\Delta)\Delta=(\Delta\otimes\id)\Delta,\qquad(\varepsilon\otimes\id)\Delta=\id=(\id\otimes\varepsilon)\Delta,\\
\Delta(1)=1,\qquad\Delta(ab)=\Delta(a)\Delta(b),\qquad\varepsilon(1)=1,\quad\varepsilon(ab)=\varepsilon(a)\varepsilon(b).
\end{gather*}

A \emph{Hopf superalgebra} is a superbialgebra $H$ endowed with an even linear map $S:H\to H$, called the \emph{antipode}, such that $m(\id\otimes S)\Delta=\iota\varepsilon=m(S\otimes\id)$.

\section{Quasisymmetric functions in superspace}\label{022}

In this section, we study the algebra of quasisymmetric functions in superspace, a framework introduced in \cite[Section 5]{FiLaPi19} that extends the classical theory of quasisymmetric functions by incorporating anticommuting variables. We first recall its formal definition and its associated monomial basis \cite[Subsection 5.1]{FiLaPi19}. Subsequently, we demonstrate that, much like $\sSym$, the algebra $\sQSym$ can also be realized as an algebra of invariants under an action of the finitary symmetric group $\sym_\infty$. We call this the \emph{quasisymmetrizing action in superspace}, which naturally extends the classical quasisymmetrizing action on $\QSym$ \cite[Section 3]{Hi00}. Finally, we recall the basis of fundamental quasisymmetric functions in superspace~\cite[Subsection 5.5]{FiLaPi19} \cite{FiGaLaPi25}.

\subsection{The algebra of quasisymmetric functions in superspace}\label{003}

Let $\Q^\theta\dbrack{x}$ be the algebra of formal power series of bounded degree in the variables $x=(x_1,x_2,\ldots)$ and $\theta=(\theta_1,\theta_2,\ldots)$, subject to the relations $x_ix_j=x_jx_i$, $x_i\theta_j=\theta_jx_i$, and $\theta_i\theta_j=-\theta_j\theta_i$. Note that $\theta_i^2= 0$ for all $i$, and that every monomial $u\in\Q^\theta\dbrack{x}$ can be uniquely expressed in the normal form $u=q\theta_{i_1}\cdots\theta_{i_m}x_{j_1}\cdots x_{j_n}$, where $q\in\Q$, $i_1<\cdots<i_m$, and $j_1\leq\cdots\leq j_n$. In what follows, we shall always assume that monomials are written in this form. For a monomial $u=\theta_{i_1}\cdots\theta_{i_m}x_{j_1}\cdots x_{j_n}$, the \emph{set of indices} of $u$ is defined as $\ind(u)=\{i_1,\ldots,i_m,j_1,\ldots,j_n\}$.

A formal power series $f=f(x,\theta)\in\Q^\theta\dbrack{x}$ is called a \emph{symmetric function in superspace} if it is invariant under any simultaneous permutation of the indices of both $x$ and $\theta$; that is, $f(x,\theta)=f(\sigma x,\sigma\theta)$ for all $\sigma\in\sym_n$ and all $n\in\N$, where $\sigma x=(x_{\sigma(1)},x_{\sigma(2)},\ldots)$ and $\sigma\theta=(\theta_{\sigma(1)},\theta_{\sigma(2)},\ldots)$. The set of all such symmetric functions forms a subalgebra of $\Q^\theta\dbrack{x}$ called the \emph{algebra of symmetric functions in superspace}, denoted by $\sSym$ \cite{DeLaMa03,DeLaMa04}. This algebra naturally contains the classical algebra of symmetric functions $\Sym$ as the subalgebra of elements that are independent of the variables $\theta$. By definition, both $\sSym$ and its subalgebra $\Sym$ are algebras of invariants under the natural action of the finitary symmetric group $\sym_\infty$ on the sets of variables $x$ and $\theta$.

A formal power series $f\in\Q^\theta\dbrack{x}$ is called a \emph{quasisymmetric function in superspace} if, for any sequence of nonnegative integers $a_1,\ldots,a_k$ and any boolean sequence $\epsilon_1,\dots,\epsilon_k\in\{0,1\}$ such that $a_i+\epsilon_i\geq 1$ for all $i\in[k]$, the coefficient of the monomial $\theta_{i_1}^{\epsilon_1}\cdots\theta_{i_k}^{\epsilon_k}x_{i_1}^{a_1}\cdots x_{i_k}^{a_k}$ occurring in $f$ is the same for all strictly increasing sequences of indices $i_1<\cdots<i_k$ \cite[Definition 5.2]{FiLaPi19}. The set of all such functions forms a subalgebra of $\Q^\theta\dbrack{x}$ called the \emph{algebra of quasisymmetric functions in superspace}, denoted by $\sQSym$ \cite[Proposition 5.5]{FiLaPi19}. This algebra naturally contains $\sSym$ as a subalgebra and contains the classical algebra of quasisymmetric functions $\QSym$ as the subalgebra of elements that are independent of the variables $\theta$.

The natural basis for $\sQSym$ is indexed by dotted compositions. A \emph{dotted composition} is a finite sequence $\alpha=(\alpha_1,\ldots,\alpha_k)$, where each component $\alpha_i\in\N\cup\dot{\N}_0$ \cite[Definition 5.1]{FiLaPi19}. Note that classical integer compositions are naturally recovered as dotted compositions with no dotted components. The \emph{monomial quasisymmetric function in superspace} indexed by $\alpha$ is defined as the formal power series \cite[Definition 5.3]{FiLaPi19}:\[M_\alpha=\sum_{i_1<\cdots<i_k}\theta_{i_1}^{\bar{\alpha}_1}\cdots\theta_{i_k}^{\bar{\alpha}_k}x_{i_1}^{\ubar{\alpha}_1}\cdots x_{i_k}^{\ubar{\alpha}_k},\]where, for each $i\in[k]$, $\bar{\alpha}_i=0$ and $\ubar{\alpha}_i=\alpha_i$ if $\alpha_i\in\N$, and $\bar{\alpha}_i=1$ and $\ubar{\alpha}_i=a$ if $\alpha_i=\dot{a}$ for some $a\in\N_0$. The set of all $M_\alpha$, where $\alpha$ ranges over all dotted compositions, forms a linear basis for $\sQSym$.

\subsection{Quasisymmetrizing action in superspace}\label{034}

As shown in \cite[Proposition 3.15]{Hi00}, the classical algebra $\QSym$ can be realized as an algebra of invariants under an action of the finitary symmetric group, known as the \emph{quasisymmetrizing action} \cite[Section 3]{Hi00}. In this subsection, we extend this action to the algebra of quasisymmetric functions in superspace, and show that $\sQSym$ is similarly an algebra of invariants under this extended action of $\sym_\infty$. These results suggest that the algebra $\sQSym$ is one of the invariant algebras belonging to the framework established in \cite{PreArGoMa26B}.

We first observe that for every monic monomial $u\in\Q^\theta\dbrack{x}$, there exists a minimal nonnegative integer $n$ such that $u=\theta_1^{\epsilon_1}\cdots\theta_n^{\epsilon_n}x_1^{a_1}\cdots x_n^{a_n}$, where $a_1,\ldots,a_n\in\N_0$, $\epsilon_1,\ldots,\epsilon_n\in\{0,1\}$, and $a_n+\epsilon_n\geq1$. This observation allows us to characterize the underlying variables of these monomials by means of dotted pseudo-compositions. A \emph{dotted pseudo-composition} is a finite sequence $\alpha=[\alpha_1,\ldots,\alpha_k]$, where each component $\alpha_i\in\N_0\cup\dot{\N}_0$. The dotted pseudo-composition associated with the monomial $u$ is defined as $\alpha(u)=[\alpha_1,\ldots,\alpha_n]$, where $\alpha_i=a_i$ if $\epsilon_i=0$, and $\alpha_i=\dot{a}_i$ if $\epsilon_i=1$. Conversely, given a dotted pseudo-composition $\alpha=[\alpha_1,\ldots,\alpha_n]$, let $\ubar{\alpha}_i\in\N_0$ denote its underlying integer value and $\bar{\alpha}_i\in\{0,1\}$ denote its dot indicator. Then, the corresponding monic monomial is given by $u_\alpha=\theta_1^{\bar{\alpha}_1}\cdots\theta_n^{\bar{\alpha}_n}x_1^{\ubar{\alpha}_1}\cdots x_n^{\ubar{\alpha}_n}$. For instance, if $u=\theta_2\theta_5x_3^3x_4x_5^2x_7$, its dotted pseudo-composition is $\alpha(u)=[0,\dot{0},3,1,\dot{2},0,1]$.

These assignments establish a canonical bijection between the set of monic monomials and the set of all dotted pseudo-compositions. Using this bijection, we can define an action of the finitary symmetric group $\sym_\infty$ on the monic monomials, which then extends naturally to the entire algebra $\Q^\theta\dbrack{x}$. Let $s_i$ be the simple transpositions exchanging $i$ with $i+1$. We define the \emph{quasisymmetrizing action} of $s_i$ on a dotted pseudo-composition $\alpha=[\alpha_1,\ldots,\alpha_k]$ by\[s_i\alpha=\begin{cases}
\,[\alpha_1,\ldots,\alpha_{i-1},\alpha_{i+1},\alpha_i,\alpha_{i+2},\ldots,\alpha_k]&\text{if }i<k\text{ and }0\in\{\alpha_i,\alpha_{i+1}\}\\
\,[\alpha_1,\ldots,\alpha_{k-1},0,\alpha_k]&\text{if }i=k\\
\,[\alpha_1,\ldots,\alpha_k]&\text{otherwise}.
\end{cases}\]It is straightforward to verify that the operators $s_i$ satisfy the Coxeter relations, ensuring that this indeed defines an action of $\sym_\infty$ on $\Q^\theta\dbrack{x}$. For instance, if $u=\theta_2\theta_5x_3^3x_4x_5^2x_7$ and $\sigma=s_5s_3s_2$, we have\[\sigma u=\sigma\,\theta_1^0\theta_2^{{\red1}}\theta_3^{{\red0}}\theta_4^{{\red0}}\theta_{{\blue5}}^1\theta_6^0\theta_7^0x_1^0x_2^{{\red0}}x_3^{{\red3}}x_4^{{\red1}}x_{{\blue5}}^2x_6^0x_7^1=s_5\theta_1^0\theta_2^1\theta_3^0\theta_4^0\theta_{{\blue5}}^{{\red1}}\theta_6^{{\red0}}\theta_7^0x_1^0x_2^0x_3^3x_4^1x_{{\blue5}}^{{\red2}}x_6^{{\red0}}x_7^1=\theta_2\theta_{{\blue6}}x_3^3x_4x_{{\blue6}}^2x_7.\]

For a dotted pseudo-composition $\beta$, we denote by $\hat{\beta}$ the unique dotted composition obtained by removing all its parts equal to $0$. If $u$ is the unique monic monomial defined by $\beta$, we call $\hat{\beta}$ the \emph{dotted composition associated with $u$}. Given a dotted composition $\alpha$ and a finite subset $A\subset\N$ with $|A|=\ell(\alpha)$, we denote by $A^\alpha$ the unique monic monomial whose set of indices is $A$ and whose associated dotted composition is $\alpha$. For instance, the monomial $u=\theta_2\theta_5x_3^3x_4x_5^2x_7$ is defined by the dotted pseudo-composition $[{\red0},\dot{0},3,1,\dot{2},{\red0},1]$, and so $u=\{2,3,4,5,7\}^{(\dot{0},3,1,\dot{2},1)}$.

Since the quasisymmetrizing action only exchanges adjacent components when at least one is zero, it strictly preserves the relative order of the nonzero parts. Thus, the associated dotted composition remains invariant, and the action merely permutes the underlying set of indices. This yields the following description.
\begin{proposition}\label{016}
Let $\alpha$ be a dotted composition, $A \subset \N$ be a finite subset with $|A|=\ell(\alpha)$, and $\sigma\in\sym_\infty$. Then the quasi-symmetrizing action on the monomial $A^\alpha$ is given by $\sigma\cdot A^\alpha=\sigma(A)^\alpha$, where $\sigma(A)=\{\sigma(a)\mid a\in A\}$.
\end{proposition}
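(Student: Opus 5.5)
The plan is to reduce to simple transpositions and then to a direct inspection of the definition of $s_i$ on dotted pseudo-compositions.

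\textbf{Reduction to generators.} Both sides of the claimed identity define left actions of $\sym_\infty$ on the set of monomials $A^\alpha$ with $\alpha$ fixed. The left side is an action by the Coxeter-relation remark preceding the statement. The right side is an action because $\sigma(\tau(A))=(\sigma\tau)(A)$. Since $\sym_\infty$ is generated by the $s_i$, it suffices to prove $s_i\cdot A^\alpha=s_i(A)^\alpha$ for every $i\in\N$. The general case then follows by induction on the length of a word for $\sigma$: if $\sigma=s_{i}\tau$, then
\[
\sigma\cdot A^\alpha=s_i\cdot(\tau\cdot A^\alpha)=s_i\cdot\tau(A)^\alpha=s_i(\tau(A))^\alpha=\sigma(A)^\alpha .
\]

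\textbf{Translating to pseudo-compositions.} Fix $i$ and let $\beta=[\beta_1,\ldots,\beta_k]$ be the dotted pseudo-composition of $A^\alpha$, with $k=\max A$. By construction:
\begin{itemize}
\item $\beta_j\neq0$ exactly when $j\in A$;
\item the nonzero entries of $\beta$, read from left to right, form $\alpha$.
\end{itemize}
Here ``$\beta_j=0$'' means the undotted $0$; note that $\dot0\neq0$ counts as a nonzero part. Then check the cases of the definition of $s_i$.
\begin{enumerate}
\item If $i<k$ and $0\in\{\beta_i,\beta_{i+1}\}$, swapping the two entries gives a sequence whose nonzero set is obtained from $A$ by exchanging $i$ and $i+1$. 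This set is $s_i(A)$. If both entries are $0$, nothing changes, and $s_i(A)=A$ as well. The relative order of the nonzero entries is unchanged, so removing zeros still yields $\alpha$.
\item If $i=k$, inserting a $0$ before $\beta_k$ moves the last nonzero index from $k$ to $k+1$, and $k+1\notin A$. The result is $s_k(A)$, and $\hat\beta$ is again $\alpha$. The last entry stays nonzero, so the result is a legitimate normal-form pseudo-composition.
\item Otherwise, either $i<k$ with $\beta_i,\beta_{i+1}$ both nonzero, or $i>k$. In the first case $\{i,i+1\}\subseteq A$; in the second $i,i+1\notin A$. Either way $s_i(A)=A$, and the action is trivial as required.
\end{enumerate}

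\textbf{Main obstacle.} The only delicate point is the bookkeeping in the boundary case $i=k$ and the trailing-zero convention. The pseudo-composition attached to a monomial must end in a nonzero entry (minimal $n$), so after the swap in the case $i<k$, with $i+1=k$ and $\beta_k$ moved to position $i$, trailing zeros must be stripped. I will handle this by treating $\beta$ as padded with infinitely many zeros, so that all cases are uniformly ``swap positions $i,i+1$ if one is $0$.'' The identity $\hat\beta=\alpha$ and the set of nonzero positions are both insensitive to this padding.
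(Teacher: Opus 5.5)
Your proof is correct and follows essentially the same route as the paper. Both reduce to simple transpositions $s_i$ and observe that $s_i$ swaps adjacent entries of the dotted pseudo-composition only when one of them is $0$; your zero-padding matches the paper's convention $\beta_{i+1}=0$ for $i\geq k$. Hence $\hat\beta=\alpha$ is preserved and the set of nonzero positions changes from $A$ to $s_i(A)$. Your explicit case split and your handling of trailing zeros are a more careful write-up of the same argument.
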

\begin{proof}
It suffices to prove the statement for a simple transposition $\sigma=s_i$. Let $u=A^\alpha$ and let $\beta=[\beta_1,\ldots,\beta_k]$ be the unique dotted pseudo-composition defining $u$, so that $\hat{\beta}=\alpha$ and $\ind(u) = A$. By definition, the quasi-symmetrizing action of $\sigma$ exchanges $\beta_i$ and $\beta_{i+1}$ only if at least one of them is zero, where we conventionally set $\beta_{i+1}=0$ if $i\geq k$. Consequently, $\widehat{\sigma\beta}=\hat{\beta}=\alpha$, meaning $\alpha$ is the dotted composition associated with $\sigma u$. On the other hand, since the action simply permutes the components of $\beta$ according to $\sigma$, the positions of the nonzero parts are shifted accordingly. Thus, the new set of indices is exactly $\sigma(A)$, yielding $\ind(\sigma u)=\sigma(A)$. Therefore, $\sigma\cdot u=\sigma(A)^\alpha$.
\end{proof}

For instance, if $u=\theta_2\theta_{{\blue5}}x_3^3x_4x_{{\blue5}}^2x_7$ and $\sigma=s_5s_3s_2$, we have\[\sigma u=\sigma(\{2,3,4,{\blue5},7\})^{(\dot{0},3,1,\dot{\red2},1)}=\{4,2,3,{\blue6},7\}^{(\dot{0},3,1,\dot{\red2},1)}=\{2,3,4,{\blue6},7\}^{(\dot{0},3,1,\dot{\red2},1)}=\theta_2\theta_{{\blue6}}x_3^3x_4x_{{\blue6}}^2x_7.\]

Finally, we show that the quasisymmetrizing action provides a complete characterization of the algebra quasisymmetric functions in superspace.
\begin{proposition}\label{031}
A formal power series $f$ of bounded degree in $\Q^\theta\dbrack{x}$ is a quasisymmetric function in superspace if and only if $\sigma f=f$ for all permutations $\sigma\in\sym_\infty$. 
\end{proposition}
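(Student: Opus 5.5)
We reduce the statement to monomials by means of Proposition~\ref{016}. Every monic monomial $u$ can be written uniquely as $A^\alpha$ for a dotted composition $\alpha$ and a finite set $A\subset\N$ with $|A|=\ell(\alpha)$. Write $f=\sum_{\alpha}\sum_{A}c_{\alpha,A}\,A^\alpha$, where $c_{\alpha,A}\in\Q$ and the sum runs over pairs with $|A|=\ell(\alpha)$. The quasisymmetry condition of Subsection~\ref{003} says exactly that $c_{\alpha,A}$ depends only on $\alpha$, and not on $A$. Indeed, for $A=\{i_1<\cdots<i_k\}$ the monomial $A^\alpha$ is $\theta_{i_1}^{\bar\alpha_1}\cdots\theta_{i_k}^{\bar\alpha_k}x_{i_1}^{\ubar\alpha_1}\cdots x_{i_k}^{\ubar\alpha_k}$. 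This monomial is already in normal form, with no sign issue, because the $\theta$'s appear in increasing order of indices. Equivalently, $f$ is quasisymmetric if and only if $f=\sum_\alpha c_\alpha M_\alpha$.

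For the forward direction, we use linearity. The action extends linearly and termwise to power series of bounded degree, since each graded piece involves finitely many dotted compositions and the action on $\sum_A A^\alpha$ only permutes its terms. It therefore suffices to show $\sigma M_\alpha=M_\alpha$. By Proposition~\ref{016}, $\sigma M_\alpha=\sum_{|A|=\ell(\alpha)}\sigma(A)^\alpha$. Since $\sigma$ is a bijection of $\N$, the map $A\mapsto\sigma(A)$ is a bijection on $k$-subsets of $\N$. Reindexing the sum gives back $M_\alpha$.

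For the converse, suppose $\sigma f=f$ for all $\sigma\in\sym_\infty$. By Proposition~\ref{016}, $\sigma f=\sum_{\alpha,A}c_{\alpha,A}\,\sigma(A)^\alpha$. Comparing the coefficients of $B^\alpha$ gives $c_{\alpha,\sigma^{-1}(B)}=c_{\alpha,B}$ for all $\sigma$ and all $B$. Any two $k$-subsets $A,B$ of $\N$ are related by some finitary permutation; for instance, take the order-preserving bijection $A\to B$ and extend it to a permutation of $[N]$ for $N\ge\max(A\cup B)$. Hence $c_{\alpha,A}=c_{\alpha,B}$, which is exactly quasisymmetry.

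The only point needing care is the coefficient comparison, which relies on the set $\{A^\alpha\}$ being a basis of monic monomials on which $\sigma$ acts by permutation, without signs. The action is defined on monic monomials through dotted pseudo-compositions, and Proposition~\ref{016} produces monomials $\sigma(A)^\alpha$ that are again monic and in normal form. So no sign arises, and the comparison is legitimate. This sign-freeness is the main thing to verify explicitly. A naive permutation of the $\theta$'s would introduce signs, but the quasisymmetrizing action never reorders nonzero parts, so the $\theta$'s stay in increasing order.
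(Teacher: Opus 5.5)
Your proof is correct and follows essentially the same route as the paper. You expand $f$ in the monomials $A^\alpha$, use Proposition~\ref{016} to see that $\sigma$ fixes $\alpha$ and permutes the $\ell(\alpha)$-subsets $A$, and compare coefficients via a finitary permutation sending $A$ to $B$; your extra remarks on extending the bijection $A\to B$ to a permutation of $[N]$ and on sign-freeness are details the paper leaves implicit.
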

\begin{proof}
Given $f\in Q^\theta\dbrack{x}$, we can uniquely expand $f$ as\[f=\sum_\alpha\!\sum_{\substack{A\subset\N\\\ |A|=\ell(\alpha)}}C_{A,\alpha}A^\alpha,\]where the outer sum runs over all dotted compositions $\alpha$ and $c_{A,\alpha}\in\Q$. By Proposition~\ref{016}, the quasisymmetrizing action preserves the dotted composition and merely permutes the underlying set of indices.

Suppose first that $f$ is a quasisymmetric function in superspace. As shown in Subsection~\ref{003}, $f$ can be expressed as a linear combination of the monomial quasisymmetric functions in superspace $M_\alpha=\sum_{|A|=\ell(\alpha)}A^\alpha$. This means the coefficient $C_{A,\alpha}$ depends only on $\alpha$ and not on $A$. Since $\sigma$ acts as a bijection on the collection of all finite subsets of $\N$ of size $\ell(\alpha)$, it simply permutes the terms in the inner sum, leaving $f$ completely invariant. Hence, $\sigma f=f$ for all permutation $\sigma\in\sym_\infty$.

Conversely, assume that $\sigma f=f$ for all $\sigma\in\sym_\infty$. Fix a dotted composition $\alpha$ and choose any two finite subsets $A,B\subset\N$ of size $\ell(\alpha)$. Since both sets have the same finite cardinality, there exists a permutation $\sigma$ such that $\sigma(A)=B$, which implies $\sigma A^\alpha=B^\alpha$. Because $f$ is invariant under $\sigma$, the coefficients of $A^\alpha$ and $B^\alpha$ in the expansion of $f$ must coincide. Therefore, the coefficient $c_{A,\alpha}$ depends exclusively on $\alpha$, meaning $f$ is a linear combination of the elements $M_\alpha$. Thus, $f$ is a quasisymmetric function in superspace.
\end{proof}

\subsection{Fundamental symmetric functions in superspace}\label{035}

Given two dotted compositions $\alpha$ and $\beta$, we say that $\beta$ \emph{covers} $\alpha=(\alpha_1,\ldots,\alpha_k)$ if there exists an index $i\in[k-1]$ such that neither $\alpha_i$ nor $\alpha_{i+1}$ is dotted, and $\beta=(\alpha_1,\dots,\alpha_{i-1},\alpha_i+\alpha_{i+1},\alpha_{i+2},\ldots,\alpha_k)$. The set of all dotted compositions is partially ordered by the reflexive and transitive closure of this covering relation, denoted by $\preceq$ \cite[Subsection 5.3]{FiLaPi19}. See Figure~\ref{000}. Observe that when restricted to classical integer compositions, this relation coincides exactly with the standard refinement order.

\begin{figure}[H]\centering
\begin{tikzpicture}[node distance=2cm]
\node(a)at(+0,3){$_{(1,{\red2},\dot{0},1,\dot{3},{\blue3})}$};
\node(b)at(-2.4,2){$_{(1,{\red2},\dot{0},1,\dot{3},{\blue2},{\blue1})}$};
\node(c)at(+0,2){$_{(1,{\red2},\dot{0},1,\dot{3},{\blue1},{\blue2})}$};
\node(d)at(+2.4,2){$_{(1,{\red1},{\red1},1,\dot{0},\dot{3},{\blue3})}$};
\node(e)at(-2.4,1){$_{(1,{\red2},\dot{0},1,\dot{3},{\blue1},{\blue1},{\blue1})}$};
\node(f)at(+0,1){$_{(1,{\red1},{\red1},\dot{0},1,\dot{3},{\blue2},{\blue1})}$};
\node(g)at(+2.4,1){$_{(1,{\red1},{\red1},\dot{0},1,\dot{3},{\blue1},{\blue2})}$};
\node(h)at(+0,0){$_{(1,{\red1},{\red1},\dot{0},1,\dot{3},{\blue1},{\blue1},{\blue1})}$};
\draw(a)--(b);\draw(a)--(c);\draw(a)--(d);\draw(b)--(f);\draw(c)--(g);\draw(d)--(g);\draw(f)--(h);\draw(g)--(h);\draw(b)--(e);\draw(c)--(e);\draw(c)--(e);\draw(d)--(f);\draw(e)--(h);
\end{tikzpicture}
\caption{Elements smaller than or equal to $(1,2,\dot{0},1,\dot{3},3)$.}\label{000}
\end{figure}
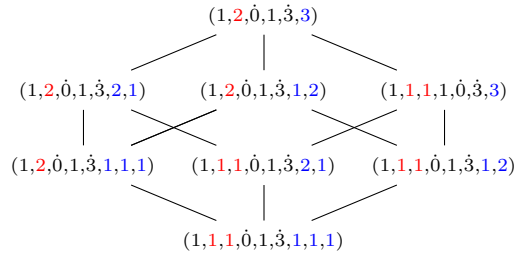
For a dotted composition $\alpha$, the \emph{fundamental quasisymmetric function in superspace} $L_\alpha$ is defined as the formal power series \cite[Definition 5.12, Equation (5.17)]{FiLaPi19}:\[L_\alpha=\sum_{\beta\preceq\alpha}M_\beta.\]For instance, by using Figure~\ref{000}, we obtain:\[\begin{array}{rcl}
L_{(1,2,\dot{0},1,\dot{3},3)}&=&M_{(1,2,\dot{0},1,\dot{3},3)}+M_{(1,2,\dot{0},1,\dot{3},2,1)}+M_{(1,2,\dot{0},1,\dot{3},1,2)}+M_{(1,1,1,\dot{0},1,\dot{3},3)}\,\,+\\[0.1cm]&&M_{(1,2,\dot{0},1,\dot{3},1,1,1)}+M_{(1,1,1,\dot{0},1,\dot{3},2,1)}+M_{(1,1,1,\dot{0},1,\dot{3},1,2)}+M_{(1,1,1,\dot{0},1,\dot{3},1,1,1)}.
\end{array}\]

We remark that there exists another family of fundamental quasisymmetric functions in superspace, which has been less studied in the literature. These alternative functions are defined with respect to a different partial order on dotted compositions, one that explicitly permits the merging of dotted components with non-dotted ones \cite[Definition 5.12, Equation (5.18)]{FiLaPi19}.

This subsection concludes with a characterization of the structure of the lower intervals in the poset of dotted compositions, which will be useful in Subsection~\ref{005}.

Recall that a \emph{Boolean lattice} of rank $n$ is a poset isomorphic to the lattice $B_n$ of subsets of $[n]$ ordered by inclusion \cite[Definition 11.13]{Shah22}.
\begin{proposition}\label{006}
Let $\alpha=(\alpha_1,\ldots,\alpha_k)$ be a dotted composition, and let $\alpha_{i_1},\ldots,\alpha_{i_s}$ be the non-dotted components of $\alpha$. Then, the interval $\alpha^{\,\downarrow}:=\{\beta\mid\beta\preceq\alpha\}$ is isomorphic to the product of Boolean lattices $B_{\alpha_{i_1}-1}\times\cdots\times B_{\alpha_{i_s}-1}$. In particular, it is a Boolean lattice of rank $(\alpha_{i_1}-1)+\cdots+(\alpha_{i_s}-1)$.
\end{proposition}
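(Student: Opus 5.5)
The idea is to split $\alpha$ into maximal blocks and reduce to the classical refinement order. First I decompose $\alpha$ into its dotted components and its maximal runs of consecutive non-dotted components. The covering relation only merges two adjacent \emph{non-dotted} parts, and the result of such a merge is again non-dotted. So every $\beta\preceq\alpha$ has the same dotted components as $\alpha$, in the same relative positions. Between consecutive dotted components, $\beta$ has a classical composition that is refined by (\emph{finer than}) the corresponding run of $\alpha$.

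Next I describe $\alpha^{\downarrow}$ part by part. Going down from $\alpha$ means splitting a non-dotted part into two non-dotted parts. The plan is to show that $\beta\preceq\alpha$ holds if and only if $\beta$ is obtained from $\alpha$ by replacing each non-dotted component $\alpha_{i_j}$ by a classical composition $\gamma^{(j)}$ of $\alpha_{i_j}$, with the dotted components left in place.

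The step I expect to be the main obstacle is the "only if" direction. A chain of covers might merge pieces that came from different original parts of $\alpha$. However, $\beta\preceq\alpha$ means a sequence of merges leads from $\beta$ up to $\alpha$. Each merge sums a contiguous block, so each part of $\alpha$ is the sum of a contiguous block of parts of $\beta$. Dotted parts are never merged, so each dotted part of $\alpha$ corresponds to exactly one dotted part of $\beta$. Each non-dotted part $\alpha_{i_j}$ is therefore the sum of a contiguous block $\gamma^{(j)}$ of non-dotted parts of $\beta$. To make this rigorous, I induct on the length of the chain and keep track of this block decomposition.

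The "if" direction is immediate: merge within each $\gamma^{(j)}$ one step at a time. The same block bookkeeping shows that $\beta\preceq\beta'$ for $\beta,\beta'\in\alpha^{\downarrow}$ holds exactly when $\gamma^{(j)}$ refines $\gamma'^{(j)}$ for every $j$. This gives a poset isomorphism between $\alpha^{\downarrow}$ and the product of the intervals below $(\alpha_{i_j})$ in the refinement order on compositions of $\alpha_{i_j}$.

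Finally, I use the standard fact that compositions of $m$ under refinement correspond to subsets of $[m-1]$ via partial sums, with finer compositions corresponding to larger subsets. Composing with complementation, the interval below $(m)$ is isomorphic to $B_{m-1}$. The interval $\alpha^{\downarrow}$ is then isomorphic to $B_{\alpha_{i_1}-1}\times\cdots\times B_{\alpha_{i_s}-1}$. Since a product of Boolean lattices is Boolean of the summed rank, this gives the final claim.
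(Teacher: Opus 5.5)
Your proposal is correct and follows essentially the same route as the paper. Both arguments keep the dotted parts fixed, refine each non-dotted part $\alpha_{i_j}$ independently by a classical composition, and identify the interval below $(m)$ with $B_{m-1}$ via $a\mapsto[m-1]\setminus D(a)$. Your block bookkeeping (for the ``only if'' direction and for comparing two elements of $\alpha^{\,\downarrow}$) fills in what the paper simply asserts ``by definition.''
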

\begin{proof}
By definition, $\beta\preceq\alpha$ if and only if $\beta$ can be obtained by concatenating dotted compositions $\beta_1,\ldots,\beta_k$, where $\beta_{i_j}\preceq(\alpha_{i_j})$ for all $j\in[s]$, and $\beta_i=(\alpha_i)$ whenever $\alpha_i$ is dotted. It is well known that, for a positive integer $n$, the poset of compositions smaller than or equal to $(n)$ is isomorphic to the Boolean lattice $B_{n-1}$ of subsets of $[n-1]$ \cite[Section~1.2]{St97}. Indeed, if $a=(a_1,\ldots,a_r)\preceq(n)$, its classical descent set is $D(a)=\{a_1,a_1+a_2,\ldots,a_1+\cdots+a_{r-1}\}$ and the isomorphism is $a\mapsto[n-1]\setminus D(a)$. Hence, as the refinements $\beta_{i_1},\ldots,\beta_{i_s}$ occur independently, the map $\beta\mapsto([\alpha_{i_1}-1]\setminus D(\beta_{i_1}),\ldots,[\alpha_{i_s}-1]\setminus D(\beta_{i_s}))$ defines the desired poset isomorphism between $\alpha^{\,\downarrow}$ and $B_{\alpha_{i_1}-1}\times\cdots\times B_{\alpha_{i_s}-1}$.
\end{proof}

\section{Quasisymmetric functions in noncommuting variables in superspace}\label{023}

Classical symmetric and quasisymmetric functions have been naturally extended to the setting of noncommuting variables, yielding the algebras $\NCSym$ \cite{Wol36,RoSa06} and $\NCQSym$ \cite[Section 5]{BeZa09}~\cite[Subsection 2.2]{NoThWi10}. Recently, this framework was further extended to superspace with the introduction of $\sNCSym$, the \emph{algebra of symmetric functions in noncommuting variables in superspace} \cite{ArGoMa25}. In this section, we complete this picture by introducing $\sNCQSym$, the \emph{algebra of quasisymmetric functions in noncommuting variables in superspace}. Furthermore, we demonstrate that $\sNCQSym$ can also be realized as an algebra of invariants under a noncommuting analogue of the quasisymmetrizing action of the group $\sym_\infty$ in superspace. Subsequently, we endow $\sNCQSym$ with a Hopf algebra structure that simultaneously generalizes the Hopf structure of $\NCQSym$ given in \cite[Subsection 5.2]{BeZa09} and the Hopf structure of $\sQSym$ established in \cite[Subsection 5.2]{FiLaPi19}. Finally, we show that $\sNCSym$ naturally inherits a Hopf superalgebra structure by virtue of being a sub-Hopf superalgebra of $\sNCQSym$.

\subsection{The algebra of quasisymmetric functions in noncommuting variables in superspace}\label{036}

Let $\Q^\theta\dangle{x}$ be the algebra of formal power series of bounded degree in variables $x=(x_1,x_2,\ldots)$ and $\theta=(\theta_1,\theta_2,\ldots)$, subject to the relations $x_i\theta_j=\theta_jx_i$ and $\theta_i\theta_j=-\theta_j\theta_i$. Note that every monomial $u\in\Q^\theta\dangle{x}$ can be uniquely expressed in the normal form $u=q\theta_{i_1}\cdots\theta_{i_m}x_{j_1}\cdots x_{j_n}$, where $q\in\Q$ and $i_1<\cdots<i_m$. In what follows, we shall always assume that monomials are written in this form.

Analogously to Subsection~\ref{003}, for a monomial $u=\theta_{i_1}\cdots\theta_{i_m}x_{j_1}\cdots x_{j_n}$, the \emph{set of indices} of $u$ is defined as $\ind(u)=\{i_1,\ldots,i_m,j_1,\ldots,j_n\}$. Let $a\mapsto a^*$ be the unique order-preserving map from $\ind(u)$ to $[k]$, where $k=|\ind(u)|$. The \emph{standardization} of $u$ is the monomial defined by $\std(u)=\theta_{i_1^*}\cdots\theta_{i_m^*}x_{j_1^*}\cdots x_{j_n^*}$. We say that $u$ is \emph{standard} if $\std(u)=u$.

A formal power series $f\in\Q^\theta\dangle{x}$ is called a \emph{quasisymmetric function in noncommuting variables in superspace} if all the monomials with the same standardization have the same coefficient; that is, if $pu$ and $qv$ are two monomials occurring in $f$ such that $\std(u)=\std(v)$, then $p=q$. By definition, the set of all such quasisymmetric functions forms a subspace of $\Q^\theta\dangle{x}$ denoted by $\sNCQSym$.

The natural basis for $\sNCQSym$ is indexed by set supercompositions. A \emph{set supercomposition} $I$ is a finite sequence $(I_1,\ldots,I_k)$ of nonempty subsets of $[n]_0$, called \emph{blocks}, such that their union, after removing $0$ from every block, is $[n]$, and $I_i\cap I_j\subseteq\{0\}$ for all $i\neq j$. Blocks containing $0$ are called \emph{fermionic blocks}, and we say that $I$ is of \emph{bidegree} $(n,m)$ if it contains exactly $m$ such fermionic blocks. The set of all set supercompositions of bidegree $(n,m)$ is denoted by $\sComp_{n,m}$. For instance, below are two set supercompositions $I$ and $J$ of bidegree $(4,3)$:\[I=(\{0,1,3\},\{4\},\{0\},\{0,2\})\quad\text{and}\quad J=(\{4\},\{0,2\},\{0,1,3\},\{0\}).\]

For a monomial $u$ with $\std(u)=\theta_{i_1}\cdots\theta_{i_m}x_{j_1}\cdots x_{j_n}$, we denote by $I(u)$ the unique set supercomposition $(I_1,\ldots,I_k)$, where $k=|\ind(u)|$ and the $r$th block is given by $I_r=\{t\in[n]\mid j_t=r\}\cup\{0\mid r\in\{i_1,\ldots,i_m\}\}$. In other words, each block $I_r$ consists of the positions where the variable $x_r$ occurs in $\std(u)$, and the element $0$ is added whenever $\theta_r$ occurs in $\std(u)$. Note that, by definition, $I(u)=I(v)$ if and only if $\std(u)=\std(v)$. For instance, for $u=\theta_2\theta_8\,x_7x_2x_7x_5x_9x_2x_5x_7$, we have\[\std(u)=\theta_1\theta_4\,x_3x_1x_3x_2x_5x_1x_2x_3\quad\text{and}\quad I(u)=(\{0,2,6\},\{4,7\},\{1,3,8\},\{0\},\{5\}).\]The \emph{monomial quasisymmetric function in noncommuting variables in superspace} indexed by $I$ is defined as the formal power series:\[M_I=\sum_{I(u)=I}u.\]For instance, if $I=(\{2,4\},\{0,1,5\},\{0,3\})$, we have\[M_I=\theta_2\theta_3x_2x_1x_3x_1x_2+\theta_2\theta_4x_2x_1x_4x_1x_2+\theta_3\theta_4 x_3x_1x_4x_1x_3+\theta_3\theta_4x_3x_2x_4x_2x_3+\cdots\]

The subspace of $\sNCQSym$ spanned by the set $\{M_I\mid I\text{ is a set supercomposition of bidegree }(n,m)\}$ is denoted by $\sNCQSym_{n,m}$. This induces a $\Z_2$-grading that endows $\sNCQSym$ with the structure of a superalgebra:\begin{equation}\label{030}\sNCQSym=\sNCQSym_0\oplus\sNCQSym_1,\end{equation}where\[\sNCQSym_0=\bigoplus_{n,k\geq0}\sNCQSym_{n,2k}\quad\text{and}\quad\sNCQSym_1=\bigoplus_{n,k\geq0}\sNCQSym_{n,2k+1}.\]

The product of the basis functions $M_I$ can be described by an adaptation of the classical quasi-shuffle construction \cite{MHof00}. To this end, we first introduce a shuffle operation on set supercompositions.

For a set supercomposition $J=(J_1,\ldots,J_k)$ and an integer $n$, the \emph{$n$-shift} of $J$ is the sequence $J[n]=(J_1[n],\ldots,J_k[n])$, where each $J_i[n]$ is obtained from $J_i$ by adding $n$ to every positive element. Now, let $I=(I_1,\ldots,I_h)$ be a set supercomposition of bidegree $(n,m)$, and let $J=(J_1,\ldots,J_k)$ be another set supercomposition. An \emph{$(I,J)$-shuffle} is a set supercomposition $(K_1,\ldots,K_{h+k})$ such that the sequence of its blocks is a permutation of the blocks of $I$ and $J[n]$ that preserves the relative order of the blocks from each original sequence. More precisely, if $K_a=I_i$ and $K_b=I_j$ with $i<j$, then $a<b$; similarly, if $K_a=J_i[n]$ and $K_b=J_j[n]$ with $i<j$, then $a<b$. We denote by $\varepsilon(K)$ the number of inversions between the fermionic blocks of $I$ and $J$ in $K$, that is, $\varepsilon(K)=|\{(i,j)\mid I_i\text{ and }J_j[n]\text{ are fermionic, and }J_j[n]\text{ appears before }I_i\text{ in }K\}|$. For instance, if $I=(\{{\red0}\},\{{\red0},{\red1}\})$ and $J=(\{0,1,3\},\{2\})$, the $(I,J)$-shuffles are the following:\[\begin{array}{lll}(\{{\red0}\},\{{\red0},{\red1}\},\{0,2,4\},\{3\}),\quad&(\{{\red0}\},\{0,2,4\},\{{\red0},{\red1}\},\{3\}),\quad&(\{0,2,4\},\{{\red0}\},\{{\red0},{\red1}\},\{3\}),\\(\{{\red0}\},\{0,2,4\},\{3\},\{{\red0},{\red1}\}),\quad&(\{0,2,4\},\{{\red0}\},\{3\},\{{\red0},{\red1}\}),\quad&(\{0,2,4\},\{3\},\{{\red0}\},\{{\red0},{\red1}\}).\end{array}\]A set supercomposition $K$ is called an \emph{$(I,J)$-quasi-shuffle} if it can be obtained from an $(I,J)$-shuffle $L=(L_1,\dots,L_{h+k})$ by a possibly empty sequence of mergers of consecutive blocks $L_i$ and $L_{i+1}$ into their union $L_i\cup L_{i+1}$, with the restriction that $L_i\in I$, $L_{i+1}\in J$, and they cannot both be fermionic. The set of all $(I,J)$-quasi-shuffles is denoted by $\QSh(I,J)$. The \emph{sign} of an $(I,J)$-quasi-shuffle $K$ is defined as $\sgn(K)=(-1)^{\varepsilon(L)}$, where $L$ is the unique $(I,J)$-shuffle from which $K$ is obtained.

\begin{proposition}\label{001}
Let $I$ and $J$ be two set supercompositions. Then\[M_I\,M_J=\sum_{K\in\QSh(I,J)}\sgn(K)M_K.\]Consequently, $\sNCQSym$ is an algebra.
\end{proposition}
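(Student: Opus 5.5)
We prove the formula by expanding both sides monomial by monomial. By definition, $M_IM_J=\sum_{u,v}uv$, where $u$ runs over monomials with $I(u)=I$ and $v$ over monomials with $I(v)=J$. First we bring each product $uv$ to normal form. Write $u=\theta_Ax_{j_1}\cdots x_{j_n}$ and $v=\theta_Bx_{l_1}\cdots x_{l_{n'}}$, where $\theta_A$ and $\theta_B$ are increasing products. The $x$'s commute with the $\theta$'s, so
\[uv=\theta_A\theta_B\,x_{j_1}\cdots x_{j_n}x_{l_1}\cdots x_{l_{n'}}.\]
If $A\cap B\neq\emptyset$, this product is $0$. Otherwise, sorting $\theta_A\theta_B$ into increasing order gives the sign $(-1)^{e}$, where $e=|\{(a,b)\in A\times B\mid b<a\}|$. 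The $x$-word is just the concatenation, so the positions of $x$'s coming from $v$ are shifted by $n$. This shift is the $J[n]$ appearing in the definition of shuffles.

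Next, for each surviving product $w=uv$ (up to sign), we identify $I(w)$. Let $C=\ind(u)\cup\ind(v)$, ordered increasingly. Each index $c\in C$ is of one of three types: it lies only in $\ind(u)$, only in $\ind(v)$, or in both. The blocks of $I(w)$, read in increasing order of $c$, are as follows.
\begin{itemize}
\item An index only in $\ind(u)$ contributes the corresponding block of $I$.
\item An index only in $\ind(v)$ contributes the corresponding block of $J[n]$.
\item A common index contributes the union of one block of $I$ and one block of $J[n]$. Since $A\cap B=\emptyset$, these two blocks are not both fermionic.
\end{itemize}
Because the indices of $u$ appear in increasing order, the blocks of $I$ keep their relative order, and likewise for $J$. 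Hence $I(w)$ is an $(I,J)$-quasi-shuffle $K$, obtained from a unique underlying shuffle $L$. In $L$, a merged pair is listed with the $I$-block immediately before the $J$-block, matching the restriction in the definition of $\QSh(I,J)$.

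The sign is determined by $L$. For fermionic blocks, the pair $(a,b)$ with $b<a$ is exactly a fermionic block of $J[n]$ appearing before a fermionic block of $I$ in $L$. Merged pairs contribute nothing, since at most one of the two blocks is fermionic. Therefore $e=\varepsilon(L)$, and the sign of $uv$ is $\sgn(K)$, which depends only on $K$.

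Conversely, we need bijectivity. Fix $K\in\QSh(I,J)$ and a monomial $w$ with $I(w)=K$. We claim there is exactly one pair $(u,v)$ with $I(u)=I$, $I(v)=J$ and $uv=\pm w$. To build it, split the $x$-word of $w$ into its first $n$ letters and its remaining letters. Distribute each $\theta_c$ of $w$ to $u$ or to $v$ according to whether the fermionic block of $K$ at index $c$ comes from $I$ or from $J[n]$. This is well defined because merged blocks contain at most one $0$. One then checks that $I(u)=I$ and $I(v)=J$. Moreover, different pairs $(K,w)$ arise from different $(u,v)$, because $K$ is recovered from the overlap pattern of $\ind(u)$ and $\ind(v)$.

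Summing over all $(u,v)$ then gives $M_IM_J=\sum_{K}\sgn(K)M_K$. So $\sNCQSym$ is closed under multiplication, and it is therefore an algebra, with unit $M_{()}$. The main obstacle is the careful sign bookkeeping, in particular ensuring that the sign depends only on $K$ and not on the actual indices. The argument above handles this by identifying the sorting sign $e$ with the order of fermionic blocks in $L$.
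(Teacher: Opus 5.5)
Your argument is essentially the paper's: expand $M_IM_J$ monomial by monomial, classify the indices of $uv$ as lying only in $\ind(u)$, only in $\ind(v)$, or in both, read off $I(uv)$ as a quasi-shuffle, and match the sign of sorting $\theta_A\theta_B$ with the count $\varepsilon(L)$ of fermionic inversions. Your converse is more explicit than the paper's ``reverse the construction''.

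There is, however, one step that fails, and the paper's proof (indeed its definition of $\sgn(K)$) shares it. You assert that $K$ comes from a \emph{unique} shuffle $L$. You conclude from this that $\sgn(K)$ depends only on $K$, and that each $w$ with $I(w)=K$ has exactly one preimage $(u,v)$. A block of $K$ containing a nonzero entry does reveal its provenance: entries $\le n$ come from $I$, entries $>n$ come from $J[n]$, and a block of $I$ or $J$ is determined by its nonzero part. A standalone block $\{0\}$ does not reveal its provenance when both $I$ and $J$ contain a block $\{0\}$. In that case your rule ``give $\theta_c$ to $u$ or to $v$ according to whether the block at $c$ comes from $I$ or from $J[n]$'' is not well defined. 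For $I=J=(\{0\})$ and $w=\theta_1\theta_2$, both $(u,v)=(\theta_1,\theta_2)$ and $(\theta_2,\theta_1)$ occur, with opposite signs. Accordingly $M_{(\{0\})}^2=(\sum_i\theta_i)^2=0$, whereas the formula read with $\QSh(I,J)=\{(\{0\},\{0\})\}$ as a set gives $\pm M_{(\{0\},\{0\})}\neq0$.

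The repair is to sum over \emph{labeled} quasi-shuffles. A labeled quasi-shuffle is an $(I,J)$-shuffle that records which blocks come from $I$ and which from $J[n]$, together with the merge pattern; equivalently, read $\QSh(I,J)$ as a multiset. The label of a product $uv$ is fixed by whether each index lies in $\ind(u)$ or in $\ind(v)$. With this indexing, your bijection between pairs $(u,v)$ and pairs (labeled quasi-shuffle, $w$) and your identification $e=\varepsilon(L)$ go through verbatim. The conclusion that $\sNCQSym$ is an algebra is unaffected.
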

\begin{proof}
Let $u=\theta_{i_1}\cdots\theta_{i_m}x_{j_1}\cdots x_{j_n}$ and $v=\theta_{p_1}\cdots\theta_{p_r}x_{q_1}\cdots x_{q_s}$ be monomials occurring in $M_I$ and $M_J$, respectively. If $\{i_1,\ldots,i_m\}\cap\{p_1,\ldots,p_r\}\neq\emptyset$, then $uv=0$, since $\theta_i^2=0$. Thus, we may assume they are disjoint. We write $uv=\theta_{i_1}\cdots\theta_{i_m}\theta_{p_1}\cdots\theta_{p_r}
\,x_{j_1}\cdots x_{j_n}x_{q_1}\cdots x_{q_s}=\sgn(uv)w$, where $\sgn(uv)$ is a power of $-1$ obtained by reordering the fermionic variables into increasing order, and $w$ is a monic monomial.

Let $K=(K_1,\ldots,K_k)$ be the set supercomposition associated with $w$. Recall that each block $K_t$ is determined by an index $a_t\in\ind(w)$, and consists of the positions where the variable $x_{a_t}$ appears, together with the element $0$ whenever $\theta_{a_t}$ appears in $w$. Let $\ind(w)=\{a_1<\cdots<a_k\}$. We construct the sequence of blocks $K_t$ by considering each index $a_t$ in strictly increasing order: (1) If $a_t\in\ind(u)\setminus\ind(v)$, then all occurrences of $x_{a_t}$ come from $u$, and therefore $K_t=I_i$ for some $i$. (2) If $a_t\in\ind(v)\setminus\ind(u)$, then $K_t=J_j[n]$ for some $j$. (3) If $a_t\in\ind(u)\cap\ind(v)$, then the occurrences of $x_{a_t}$ come from both $u$ and $v$, and we obtain $K_t=I_i\cup J_j[n]$ for some $i$ and $j$. Note that this construction preserves the relative order of the blocks coming from $I$ and from $J[n]$, since the indices in $\ind(u)$ and $\ind(v)$ are considered in increasing order. Therefore, $K$ is obtained from an $(I,J)$-shuffle by possibly joining consecutive blocks. On the other hand, the sign obtained by reordering the fermionic variables coincides with $\sgn(K)$, because each transposition corresponds to an inversion between fermionic blocks coming from $I$ and $J[n]$ in $K$.

Conversely, given $K\in\QSh(I,J)$, one can reverse this construction to obtain monomials $u$ and $v$ contributing to $M_I$ and $M_J$, respectively, such that their product produces a monomial in $M_K$ with coefficient $\sgn(K)$.
\end{proof}
For instance, if $I=(\{{\red1},{\red2}\},\{{\red0}\})$ and $J=(\{0,2\},\{1,3\})$, we have
\[\begin{array}{rcl}
M_IM_J&\!=\!&M_{(\{{\red1},{\red2}\},\{{\red0}\},\{0,4\},\{3,5\})}-M_{(\{{\red1},{\red2}\},\{0,4\},\{{\red0}\},\{3,5\})}-M_{(\{0,{\red1},{\red2},4\},\{{\red0}\},\{3,5\})}-M_{(\{{\red1},{\red2}\},\{0,4\},\{{\red0},3,5\})}\\[0.1cm]&&\quad-M_{(\{0,{\red1},{\red2},4\},\{{\red0},3,5\})}-M_{(\{0,4\},\{{\red1},{\red2}\},\{{\red0}\},\{3,5\})}-M_{(\{0,4\},\{{\red1},{\red2}\},\{{\red0},3,5\})}-M_{(\{{\red1},{\red2}\},\{0,4\},\{3,5\},\{{\red0}\})}\\[0.1cm]&&\quad-M_{(\{0,{\red1},{\red2},4\},\{3,5\},\{{\red0}\})}-M_{(\{0,4\},\{{\red1},{\red2}\},\{3,5\},\{{\red0}\})}-M_{(\{0,4\},\{{\red1},{\red2},3,5\},\{{\red0}\})}-M_{(\{0,4\},\{3,5\},\{{\red1},{\red2}\},\{{\red0}\})}.
\end{array}\]

\subsection{Quasisymmetrizing action in noncommuting variables in superspace}\label{037}

As mentioned in \cite[Subsection 2.2]{NoTh10}, the algebra $\NCQSym$ can be obtained via a noncommutative analogue of the quasisymmetrizing action. Following this approach, we extend this action to superspace and show that the algebra $\sNCQSym$ is an algebra of invariants under this extended action of the finitary symmetric group $\sym_\infty$. As with $\sQSym$, this result suggests that the algebra $\sNCQSym$ is one of the invariant algebras belonging to the framework established in \cite{PreArGoMa26B}.

A \emph{pseudo set supercomposition} is a finite sequence $I=[I_1,\ldots,I_k]$ of subsets of $[n]_0$, called \emph{blocks}, such that the subsequence $(I_{i_1},\ldots,I_{i_h})$ formed by its nonempty blocks defines a set supercomposition. As with $\Q^\theta\dbrack{x}$, we associate every monic monomial in $\Q^\theta\dangle{x}$ with a pseudo set supercomposition. For a monomial $u=\theta_{i_1}\cdots\theta_{i_m}x_{j_1}\cdots x_{j_n}$ with $i_1<\cdots<i_m$ and $k=\max(\ind(u))$, its associated pseudo set supercomposition is $\tilde{I}(u)=[I_1,\ldots,I_k]$, where the $r$th block is $I_r=\{t\in[n]\mid j_t=r\}\cup\{0\mid r\in\{i_1,\ldots,i_m\}\}$ for all $r\in[k]$. Note that the subsequence formed by the nonempty blocks of $\tilde{I}(u)$ is precisely $I(u)$, the associated set supercomposition of $u$. For instance, if $u=\theta_2\theta_8x_7x_2x_7x_5x_9x_2x_5x_7$, we have
\[\tilde{I}(u)=[\emptyset,\{0,2,6\},\emptyset,\emptyset,\{4,7\},\emptyset,\{1,3,8\},\{0\},\{5\}].\]

These assignments establish a bijection between the set of monic monomials and the set of all pseudo set supercompositions. Using this bijection, we can define an action of the finitary symmetric group $\sym_\infty$ on the monic monomials, which then extends naturally to the entire algebra $\Q^\theta\dangle{x}$. Recall that $s_i$ denotes the simple transposition exchanging $i$ with $i+1$. We define the \emph{quasisymmetrizing action in noncommuting variables} of $s_i$ on a pseudo set supercomposition $I=[I_1,\ldots,I_k]$ by\[s_iI=\begin{cases}
\,[I_1,\ldots,I_{i-1},I_{i+1},I_i,I_{i+2},\ldots,I_k]&\text{if }i<k\text{ and }\emptyset\in\{I_i,I_{i+1}\}\\
\,[I_1,\ldots,I_{k-1},0,I_k]&i=k\\
\,[I_1,\ldots,I_k]&\text{otherwise}.
\end{cases}\]
It is straightforward to verify that the operators $s_i$ satisfy the Coxeter relations, ensuring that this indeed defines an action of $\sym_\infty$ on $\Q^\theta\dangle{x}$. For instance, if $u=\theta_2\theta_4x_3^2x_2x_6x_3x_2x_6$ and $\sigma=s_4s_3s_6s_1$, we have\[\sigma u=\sigma\,\theta_{\blue2}\theta_{\red4}x^2_{\red3}x_{\blue2}x_{\blue6}x_{\red3}x_{\blue2}x_{\blue6}=s_4\theta_1\theta_{\blue4}x_3^2x_1x_7x_3x_1x_7=\theta_1\theta_5x_3^2x_1x_7x_3x_1x_7.\]

Given a set supercomposition $I=(I_1,\ldots,I_k)$ and a finite subset $A\subset\N$ with $|A|=|(I_1\cup\cdots\cup I_k)\setminus\{0\}|$, we denote by $A^I$ the unique monic monomial whose set of indices is $A$ and whose associated set supercomposition is $I$. For instance, the monomial $u=\theta_2\theta_4x_3^2x_2x_6x_3x_2x_6$ is defined by the pseudo set supercomposition $[{\red\emptyset},\{0,3,6\},\{1,2,5\},\{0\},{\red\emptyset},\{4,7\}]$, and so $u=\{2,3,4,6\}^{(\{0,3,6\},\{1,2,5\},\{0\},\{4,7\})}$.

The following description is analogous to Proposition~\ref{016}.
\begin{proposition}\label{026}
Let $I$ be a set supercomposition, $A\subset\N$ be a finite subset with $|A|=|(I_1\cup\cdots\cup I_k)\setminus\{0\}|$, and $\sigma\in\sym_\infty$. Then the quasi-symmetrizing action in noncommuting variables on the monomial $A^I$ is given by $\sigma\cdot A^I=\sigma(A)^I$, where $\sigma(A)=\{\sigma(a)\mid a\in A\}$.
\end{proposition}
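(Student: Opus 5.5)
It suffices to treat a simple transposition $\sigma=s_i$: both sides of $\sigma\cdot A^I=\sigma(A)^I$ are compatible with composition. The left side is an action by construction, and $(\sigma\tau)(A)=\sigma(\tau(A))$ on the right. Every element of $\sym_\infty$ is a product of simple transpositions, so the general case follows by induction on the length of a reduced word.

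So fix $u=A^I$ and let $\tilde I(u)=[I_1,\ldots,I_k]$ be its pseudo set supercomposition, with $k=\max(A)$. We use the convention that $I_{r}=\emptyset$ for $r>k$; this makes the case $i=k$, where an empty block is inserted before $I_k$, and the case $i>k$, where nothing happens, uniform with the generic case. By definition, the nonempty blocks of $\tilde I(u)$ are exactly the $I_r$ with $r\in A$. Read in increasing order of $r$, they form the set supercomposition $I$. We then split into three cases.

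\textbf{Case 1: both $I_i$ and $I_{i+1}$ are nonempty.} Then $i,i+1\in A$ and $s_i$ fixes $\tilde I(u)$. Since $s_i(A)=A$, both sides equal $A^I$.

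\textbf{Case 2: exactly one of $I_i,I_{i+1}$ is nonempty.} Say $I_i\neq\emptyset$ and $I_{i+1}=\emptyset$, so that $i\in A$ and $i+1\notin A$. The swap moves the nonempty block from position $i$ to position $i+1$ and changes nothing else. The new index set is therefore $(A\setminus\{i\})\cup\{i+1\}=s_i(A)$. The relative order of the nonempty blocks is unchanged, because no other index lies strictly between $i$ and $i+1$. The subcase $I_i=\emptyset$, $I_{i+1}\neq\emptyset$ is symmetric.

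\textbf{Case 3: both blocks are empty.} Then the pseudo set supercomposition is unchanged and $s_i(A)=A$.

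In every case the subsequence of nonempty blocks is still $I$ and the index set is $s_i(A)$. By the bijection between monic monomials and pseudo set supercompositions, this gives $s_i\cdot A^I=s_i(A)^I$.

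The only delicate point is to check that the monomial read off from the modified pseudo set supercomposition has the same fermionic content and sign. Each block carries its own $0$, recording the presence of $\theta_r$, and its own set of $x$-positions. Moving a block therefore simply relabels the variable index $r\mapsto s_i(r)$ in both $x_r$ and $\theta_r$. Because the order among occupied indices is preserved, the normal form $\theta_{i_1}\cdots\theta_{i_m}$ with $i_1<\cdots<i_m$ needs no reordering, so no sign appears. The boundary case $i=k$, handled by the empty-block convention, is the other place that needs a little care.
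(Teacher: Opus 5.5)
Your proof is correct and follows essentially the same route as the paper. Both reduce to a simple transposition $s_i$ and adopt the convention $I_{i+1}=\emptyset$ for $i\geq k$, so that the boundary case $i=k$ becomes an ordinary swap. Swapping an empty block with a nonempty one replaces $i$ by $i+1$ (or vice versa) in $A$ while keeping the order of the nonempty blocks, hence $I$, unchanged; the remaining cases fix both $\tilde{I}(u)$ and $A$. Your extra remark on signs is harmless but unnecessary, since the action is defined directly on monic monomials via the bijection with pseudo set supercompositions, so no reordering of the $\theta$'s ever arises.
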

\begin{proof}
It suffices to verify the statement for a simple transposition $\sigma=s_i$. Let $u=A^I$ and let $\tilde{I}(u)=[I_1,\ldots,I_k]$. Note that $i\in A$ if and only if $I_i\neq\emptyset$. By definition, the quasisymmetrizing action of $\sigma$ depends only on the blocks $I_i$ and $I_{i+1}$, where we conventionally set $I_{i+1}=\emptyset$ if $i\geq k$. If exactly one of $I_i$ or $I_{i+1}$ is empty, then $\sigma$ interchanges these two blocks, which corresponds to replacing $i$ by $i+1$ or vice versa in the support set $A$. If both are empty or both are nonempty, then the action is trivial and $A$ is unchanged. In all cases, we have $\sigma u=\sigma(A)^I$. Since the simple transpositions generate $\sym_\infty$, the result follows.
\end{proof}
For instance, if $u=\theta_2\theta_4x_3^2x_2x_6x_3x_2x_6$ and $\sigma=s_4s_3s_6s_1$, we have\[\sigma u=\sigma(\{2,3,4,6\})^{(\{0,3,6\},\{1,2,5\},\{0\},\{4,7\})}=\{1,3,5,7\}^{(\{0,3,6\},\{1,2,5\},\{0\},\{4,7\})}=\theta_1\theta_5x_3^2x_1x_7x_3x_1x_7.\]

Finally, we show that the quasisymmetrizing action in noncommuting variables provides a complete characterization of the algebra of quasisymmetric functions in noncommuting variables in superspace.
\begin{proposition}\label{027}
A formal power series $f$ of bounded degree in $\Q^\theta\dangle{x}$ is a quasisymmetric function in noncommuting variables in superspace if and only if $\sigma \cdot f=f$ for all permutations $\sigma\in\sym_\infty$.
\end{proposition}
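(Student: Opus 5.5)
The argument mirrors the proof of Proposition~\ref{031}, with Proposition~\ref{026} playing the role of Proposition~\ref{016}. Every monic monomial in $\Q^\theta\dangle{x}$ has a unique associated set supercomposition $I=I(u)$ and index set $A=\ind(u)$, and we have $u=A^I$. Hence any $f\in\Q^\theta\dangle{x}$ expands uniquely as
\[
f=\sum_I\sum_{\substack{A\subset\N\\ |A|=\ell(I)}}c_{A,I}\,A^I ,
\]
where the outer sum runs over set supercompositions. Here $|A|$ equals the number of blocks of $I$, which is the size of $\ind(u)$. Since $\std(A^I)$ depends only on $I$ (indeed $I(u)=I(v)$ iff $\std(u)=\std(v)$), the defining condition for membership in $\sNCQSym$ says exactly this: for each fixed $I$, the coefficient $c_{A,I}$ is independent of $A$. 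Equivalently, $f$ is a (possibly infinite, but bounded-degree) linear combination of the $M_I=\sum_A A^I$.

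For the forward direction, I will assume $f\in\sNCQSym$, so $c_{A,I}=c_I$. By Proposition~\ref{026}, $\sigma\cdot A^I=\sigma(A)^I$. Since $\sigma\in\sym_\infty$ is a bijection of $\N$, the map $A\mapsto\sigma(A)$ permutes the $k$-element subsets of $\N$. It follows that $\sigma\cdot M_I=M_I$, and by linearity $\sigma\cdot f=f$. Applying $\sigma$ termwise to a formal power series is legitimate, because the action is defined on monomials and extended linearly. It preserves bidegree and sends distinct monomials to distinct monomials (it is invertible), so no convergence issue arises.

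For the converse, I will assume $\sigma\cdot f=f$ for all $\sigma$. I fix $I$ and two subsets $A,B$ of the same size. Any bijection $A\to B$ extends to a finitary permutation $\sigma$ of $\N$, for instance one supported on $A\cup B$, and this gives $\sigma(A)=B$. By Proposition~\ref{026}, $\sigma\cdot A^I=B^I$. Comparing the coefficient of $B^I$ on both sides of $\sigma\cdot f=f$, and using that $\sigma$ acts bijectively on monic monomials, gives $c_{A,I}=c_{B,I}$. Hence $f$ lies in $\sNCQSym$.

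The only delicate point is the sign. One must check that the action sends monic monomials to monic monomials with coefficient $+1$, so that no sign from reordering the $\theta$'s appears. This holds because the action is defined through pseudo set supercompositions. It only relocates blocks and never reverses the relative order of nonempty blocks. The normal form $\theta_{i_1}\cdots\theta_{i_m}$ with $i_1<\cdots<i_m$ is therefore carried to another normal form with increasing indices, and this is already encoded in Proposition~\ref{026}. With that granted, the remaining steps are routine.
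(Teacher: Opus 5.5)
Your proposal is correct and follows essentially the same route as the paper. Both expand $f$ in the monic monomials $A^I$, use Proposition~\ref{026} to see that $\sigma$ preserves $I$ and sends $A$ to $\sigma(A)$, and use transitivity of $\sym_\infty$ on subsets of a fixed size for both directions; your condition $|A|=\ell(I)$ (the number of blocks) is in fact the right one, whereas the paper's proof writes $|A|=|(I_1\cup\cdots\cup I_k)\setminus\{0\}|$, which is a slip.
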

\begin{proof}
Given $f\in \Q^\theta\dangle{x}$, we can write\[f=\sum_I\sum_AC_{A,I}\,A^I,\]where $I$ runs over set supercompositions and $A$ over finite subsets of $\N$ with $|A|=|(I_1\cup\cdots\cup I_k)\setminus\{0\}|$ if $I=(I_1,\ldots,I_k)$. By Proposition~\ref{026}, we have $\sigma\cdot A^I=\sigma(A)^I$, so the action of $\sym_\infty$ preserves $I$ and acts only on $A$. If $f$ belongs to $\sNCQSym$, then the coefficients $C_{A,I}$ depend only on $I$. Since $\sigma$ acts as a bijection on the finite subsets of $\N$ satisfying $|A|=|(I_1\cup\cdots\cup I_k)\setminus\{0\}|$, it simply permutes the terms in the inner sum, leaving the function invariant. Hence, $\sigma f=f$. Conversely, if $\sigma f=f$ for all $\sigma\in\sym_\infty$, then for any fixed $I$ and any $A,B$ with $|A|=|B|=|(I_1\cup\cdots\cup I_k)\setminus\{0\}|$, there exists a permutation $\sigma\in\sym_\infty$ such that $\sigma(A)=B$, and thus $C_{A,I}=C_{B,I}$. Therefore, $f$ belongs to $\sNCQSym$.
\end{proof}

\subsection{Hopf superalgebra of quasisymmetric functions in noncommuting variables in superspace}\label{038}

We now endow $\sNCQSym$ with a Hopf superalgebra structure. As is customary in the theory of combinatorial Hopf algebras, we define the coproduct by extending functions to duplicated alphabets. 

Let $\Q^{\theta,\vartheta}\dangle{x,y}$ be the algebra of formal power series of bounded degree in two families of noncommuting variables $x=(x_1,x_2,\ldots)$ and $y=(y_1,y_2,\ldots)$ together with two families of anticommuting variables $\theta=(\theta_1,\theta_2,\ldots)$ and $\vartheta=(\vartheta_1,\vartheta_2,\ldots)$, subject to the following relations:
\[\theta_i\vartheta_j=-\vartheta_j\theta_i,\qquad\theta_ix_j=x_j\theta_i,\qquad\vartheta_ix_j=x_j\vartheta_i,\qquad\theta_iy_j=y_j\theta_i,\qquad \vartheta_iy_j=y_j\vartheta_i.\]
These defining relations, together with the fact that the countably infinite sets of variables $x$ and $\theta$ have the same cardinalities as the disjoint unions $x\cup y$ and $\theta\cup\vartheta$ respectively, imply that the algebras of formal power series $\Q^\theta\dangle{x}$ and $\Q^{\theta,\vartheta}\dangle{x,y}$ are isomorphic.

Since quasisymmetric functions in $\Q^\theta\dangle{x}$ are defined, whether via standardization or the quasisymmetrizing action, with respect to the total order of the variable families $x$ and $\theta$, we must first totally order the duplicated alphabets. Specifically, we endow the sets $x\cup y$ and $\theta\cup\vartheta$ with the unique total order that extends the natural index orders of the individual alphabets and satisfies $x_i<y_j$ and $\theta_i<\vartheta_j$ for all $i,j$. We denote by $\sNCQSym(x,y;\theta,\vartheta)$ the subalgebra of $\Q^{\theta,\vartheta}\dangle{x,y}$ consisting of quasisymmetric functions in noncommuting variables in superspace. By definition, there is a natural isomorphism
\[\sNCQSym\to\sNCQSym(x,y;\theta,\vartheta),\quad M_I\mapsto M_I(x,y;\theta,\vartheta),\]
where $M_I(x,y;\theta,\vartheta)$ denotes the monomial quasisymmetric function in noncommuting variables in superspace evaluated over the totally ordered duplicated alphabet.

Let $\Q^{\theta,\vartheta}\dangle{x\times y}$ be the quotient of $\Q^{\theta,\vartheta}\dangle{x,y}$ by the relations $x_iy_j=y_jx_i$ for all $i,j$. This quotient is naturally isomorphic to the tensor product superalgebra $\Q^\theta\dangle{x}\otimes\Q^\vartheta\dangle{y}$ equipped with the super tensor product multiplication:\[(f_1\otimes g_1)(f_2\otimes g_2)=(-1)^{|f_2||g_1|}f_1f_2\otimes g_1g_2,\]where $|f|$ denotes the parity of a homogeneous element $f$ with respect to the $\Z_2$-grading defined in~\eqref{030}.

Now, let $\sNCQSym^\otimes$ be the image of $\sNCQSym(x,y;\theta,\vartheta)$ under the canonical projection $\Q^{\theta,\vartheta}\dangle{x,y}\twoheadrightarrow\Q^{\theta,\vartheta}\dangle{x\times y}$. By definition, $\sNCQSym^\otimes$ can be regarded as a subsuperalgebra of $\Q^\theta\dangle{x}\otimes\Q^\vartheta\dangle{y}$. We define the coproduct $\Delta$ of $\sNCQSym$ as the composition
\[\Delta:\sNCQSym\to\sNCQSym(x,y;\theta,\vartheta)\twoheadrightarrow\sNCQSym^\otimes\simeq\sNCQSym\otimes\sNCQSym.\]

Before formally establishing the Hopf superalgebra structure of $\sNCQSym$, we provide an explicit formula for its coproduct evaluated on the monomial basis.
\begin{proposition}\label{028}
Let $I=(I_1,\ldots,I_k)$ be a set supercomposition. Then\[\Delta(M_I)=\sum_{i=0}^kM_{\std(I_1,\ldots,I_i)}\otimes M_{\std(I_{i+1},\ldots,I_k)}.\]
\end{proposition}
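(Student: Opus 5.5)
We prove the formula by evaluating $M_I$ on the ordered duplicated alphabet and then projecting. By definition, $\Delta(M_I)$ is the image of $M_I(x,y;\theta,\vartheta)$ under $\Q^{\theta,\vartheta}\dangle{x,y}\twoheadrightarrow\Q^{\theta,\vartheta}\dangle{x\times y}\simeq\Q^\theta\dangle{x}\otimes\Q^\vartheta\dangle{y}$. So we first describe the monomials of $M_I(x,y;\theta,\vartheta)$. In the total order of $x\cup y$, every $x$-index precedes every $y$-index. A monomial $w$ with $I(w)=I$ is therefore determined by two pieces of data. The first is a cut point $i\in[k]_0$: the blocks $I_1,\ldots,I_i$ are carried by letters of the alphabet $(x,\theta)$, and $I_{i+1},\ldots,I_k$ by letters of $(y,\vartheta)$. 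The second is a choice of index sets $A\subset\N$ for the $x$-part and $B\subset\N$ for the $y$-part, with $|A|=i$, $|B|=k-i$, and no further constraint, since the two alphabets are disjoint. This gives a bijection between the monomials $w$ of $M_I(x,y;\theta,\vartheta)$ and the triples $(i,A,B)$.

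Next we factor each such monomial after projection. Write $I'=\std(I_1,\ldots,I_i)$ and $I''=\std(I_{i+1},\ldots,I_k)$. In the quotient the $x$'s and $y$'s commute, so the $x$-word of $w$ can be moved to the left of the $y$-word while preserving the internal order of each. The positions recorded in $I_1,\ldots,I_i$, once standardized, are exactly the positions within the $x$-subword. This gives the $x$-part $A^{I'}$ and the $y$-part $B^{I''}$. For the fermionic variables, the normal form of $w$ lists its $\theta$'s and $\vartheta$'s in increasing order of the total order. Every $\theta$ precedes every $\vartheta$, so this product is already $(\theta\text{-part})(\vartheta\text{-part})$ with no reordering and no sign. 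The $x$- and $y$-letters commute with all odd variables, so $w$ projects to $A^{I'}\cdot B^{I''}$ written as $(\theta\text{ part})(x\text{ part})(\vartheta\text{ part})(y\text{ part})$.

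The main point to check is the sign in the identification with the super tensor product. The element $A^{I'}\otimes B^{I''}=(A^{I'}\otimes1)(1\otimes B^{I''})$ carries sign $(-1)^{|1||A^{I'}|}=1$. Moreover, in the ambient algebra it equals the product $\theta$-part, $x$-part, $\vartheta$-part, $y$-part. To match the normal form $\theta\vartheta xy$ we move the $\vartheta$-part past the $x$-part, which is sign-free because $x$ and $\vartheta$ commute. Hence $w\mapsto A^{I'}\otimes B^{I''}$ with coefficient $+1$.

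Finally we sum. For a fixed cut point $i$, summing over all $A$ and $B$ independently gives $\bigl(\sum_A A^{I'}\bigr)\otimes\bigl(\sum_B B^{I''}\bigr)=M_{I'}\otimes M_{I''}$; here $\sum_A A^{I'}=M_{I'}$ because $M_{I'}$ is the sum over all monic monomials with associated set supercomposition $I'$. Summing over $i=0,\ldots,k$ gives the stated formula, with the empty supercomposition giving $M_\emptyset=1$ at the ends.
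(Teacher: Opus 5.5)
Your proposal is correct and follows essentially the same route as the paper. Both arguments split each monomial of $M_I(x,y;\theta,\vartheta)$ at the unique cut point $i\in[k]_0$ where the blocks pass from the $(x,\theta)$ alphabet to the $(y,\vartheta)$ alphabet, identify the two pieces with monomials of $M_{\std(I_1,\ldots,I_i)}$ and $M_{\std(I_{i+1},\ldots,I_k)}$ via a bijection, and sum over $i$. You also check explicitly that no sign arises under the identification with the super tensor product, since every $\theta$ precedes every $\vartheta$ in the normal form and $\vartheta$ commutes with $x$; the paper leaves this implicit, and your check is correct.
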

\begin{proof}
Let $u\in\Q^{\theta,\vartheta}\dangle{x,y}$ be a monomial occurring in $M_I(x,y;\theta,\vartheta)$. Since the sets of variables are totally ordered with $x_i<y_j$ and $\theta_i<\vartheta_j$ for all $i,j$, there exists a unique index $i\in[k]_0$ such that the blocks $I_1,\ldots,I_i$ correspond to variables in $x\cup\theta$, and the remaining blocks $I_{i+1},\ldots,I_k$ correspond to variables in $y\cup\vartheta$. Under the canonical projection onto $\Q^{\theta,\vartheta}\dangle{x\times y}$, any such monomial $u$ can be uniquely written as a concatenation $u=u_1u_2$, where $u_1\in\Q^\theta\dangle{x}$ is a monomial occurring in $M_{\std(I_1,\ldots,I_i)}$, and $u_2\in\Q^\vartheta\dangle{y}$ is a monomial occurring in $M_{\std(I_{i+1},\ldots,I_k)}$. Thus, $u$ is mapped to the tensor $u_1\otimes u_2$. Conversely, any tensor $u_1\otimes u_2$, where $u_1$ and $u_2$ are as above, corresponds to a unique monomial $u=u_1u_2$ contributing to $M_I(x,y;\theta,\vartheta)$. Summing over all possible splitting indices $i$, we obtain the result.
\end{proof}

From Proposition~\ref{028}, it follows immediately that $\Delta$ is coassociative. We thus obtain the following structural result.
\begin{theorem}\label{032}
The algebra $\sNCQSym$, equipped with the usual product and the coproduct $\Delta$, is a graded Hopf superalgebra, with the usual counit and antipode.
\end{theorem}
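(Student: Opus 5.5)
The plan is to check the superbialgebra axioms one at a time, using Propositions~\ref{001} and~\ref{028}, and then get the antipode for free from gradedness and connectedness.

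\textbf{Superalgebra and gradings.} By Proposition~\ref{001}, $\sNCQSym$ is an algebra with unit $M_{()}=1$. The product formula shows that a quasi-shuffle $K\in\QSh(I,J)$ of $I\in\sComp_{n,m}$ and $J\in\sComp_{n',m'}$ lies in $\sComp_{n+n',m+m'}$. The reason is that mergers never join two fermionic blocks, so the fermionic blocks are simply pooled. Hence $\sNCQSym_{n,m}\sNCQSym_{n',m'}\subseteq\sNCQSym_{n+n',m+m'}$. This shows that~\eqref{030} is a superalgebra grading, compatible with an $\N$-grading by $n+m$ (or by $n$ together with $m$). Proposition~\ref{028} shows that $\Delta$ preserves bidegree, since the bidegrees of $\std(I_1,\ldots,I_i)$ and $\std(I_{i+1},\ldots,I_k)$ add up to that of $I$. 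In particular, $\Delta$ is even.

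\textbf{Counit and coassociativity.} Define $\varepsilon(M_I)=\delta_{I,()}$. Counitality then follows from the terms $i=0$ and $i=k$ of Proposition~\ref{028}. Coassociativity follows because both $(\Delta\otimes\id)\Delta(M_I)$ and $(\id\otimes\Delta)\Delta(M_I)$ equal $\sum_{i\le j}M_{\std(I_1..I_i)}\otimes M_{\std(I_{i+1}..I_j)}\otimes M_{\std(I_{j+1}..I_k)}$. Here we use that standardizing a standardized sub-sequence is the same as standardizing directly.

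\textbf{Multiplicativity of $\Delta$ (the main obstacle).} The key step is $\Delta(fg)=\Delta(f)\Delta(g)$ in the super tensor product. I would argue this through the realization rather than through the quasi-shuffle formula. The map $f(x;\theta)\mapsto f(x,y;\theta,\vartheta)$ is evaluation of a series on the ordered alphabet $x<y$, $\theta<\vartheta$. Since $\Q^\theta\dangle{x}\cong\Q^{\theta,\vartheta}\dangle{x,y}$ through an order-preserving relabelling of indices, this map is an algebra homomorphism. The canonical projection to $\Q^{\theta,\vartheta}\dangle{x\times y}$ is also an algebra homomorphism. The identification $\Q^{\theta,\vartheta}\dangle{x\times y}\cong\Q^\theta\dangle{x}\otimes\Q^\vartheta\dangle{y}$ is multiplicative for the super tensor product. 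This holds because moving a $y$-monomial of $g_1$ past an $x$-monomial of $f_2$ costs exactly the sign from anticommuting the $\vartheta$'s of $g_1$ past the $\theta$'s of $f_2$, and that sign is $(-1)^{|f_2||g_1|}$. The point needing care is that the parity of a monomial equals the parity of its number of $\theta$'s, which matches~\eqref{030}. The composite is therefore an algebra map, and by Proposition~\ref{028} its image lies in $\sNCQSym\otimes\sNCQSym$. As a sanity check I would compare the result with Proposition~\ref{001} on small examples, such as the one displayed after it. Multiplicativity of $\varepsilon$ is clear: $\varepsilon$ extracts the constant term, and every $K\in\QSh(I,J)$ is nonempty unless $I=J=()$.

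\textbf{Antipode.} The bialgebra is graded by $n+m$ and connected, since degree $0$ is spanned by $M_{()}$. The antipode therefore exists and is unique by the standard recursion $S(1)=1$, $S(M_I)=-\sum_{i\ge1}S(M_{\std(I_1..I_i)})\,M_{\std(I_{i+1}..I_k)}$ (Takeuchi's argument), which works verbatim for superbialgebras. Evenness of $S$ follows by induction from this recursion, because the product and $\Delta$ are even.
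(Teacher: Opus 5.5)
Your proposal is correct and takes the same route as the paper. The paper also gets multiplicativity of $\Delta$ from its definition as a composite of algebra maps (evaluation on the ordered doubled alphabet, projection to $\Q^{\theta,\vartheta}\dangle{x\times y}$, identification with the super tensor product), coassociativity from Proposition~\ref{028}, and the antipode from graded connectedness. You make explicit what the paper leaves implicit, notably the sign check against~\eqref{030} and the need to grade by $n+m$ rather than by $n$ alone, which is not connected because of $M_{(\{0\})},M_{(\{0\},\{0\})},\ldots$.

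Two small slips should be fixed:
\begin{enumerate}
\item The alphabet $x_1<x_2<\cdots<y_1<y_2<\cdots$ has order type $\omega+\omega$, so there is no global order-preserving relabelling onto $\N$. Justify the homomorphism property locally instead: each coefficient involves only finitely many variables, or equivalently the proof of Proposition~\ref{001} runs verbatim over any totally ordered alphabet.
\item Your antipode recursion should read $S(M_I)=-\sum_{i=0}^{k-1}S(M_{\std(I_1,\ldots,I_i)})\,M_{\std(I_{i+1},\ldots,I_k)}$. As written, the $i=k$ term is $S(M_I)$ itself, and the $i=0$ term $M_I$ must be kept.
\end{enumerate}
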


\subsection{The sub-Hopf superalgebra $\sNCSym$}\label{039}

Similarly to a symmetric function in superspace, a formal power series $f=f(x,\theta)\in\Q^\theta\dangle{x}$ is called a \emph{symmetric function in noncommuting variables in superspace} if it is invariant under any simultaneous permutation of the indices of both $x$ and $\theta$; that is, $f(x,\theta)=f(\sigma x,\sigma\theta)$ for all $\sigma\in\sym_n$ and all $n\in\N$ \cite[Subsection 3.1]{ArGoMa25}. The set of all such symmetric functions forms a subalgebra of $\Q^\theta\dangle{x}$ called the \emph{algebra of symmetric functions in noncommuting variables in superspace}, denoted by $\sNCSym$ \cite[Subsection 3.2]{ArGoMa25}. This algebra naturally contains the algebra of symmetric functions in noncommuting variables $\NCSym$ \cite{DeLaMa03,DeLaMa04} as the subalgebra of elements that are independent of the variables $\theta$.

A set supercomposition $I=(I_1,\ldots,I_k)$ is called a \emph{set superpartition} if $I_i\neq I_j$ and $\min(I_i\setminus I_j) <\min(I_j\setminus I_i)$ for all $i,j\in [k]$ with $i<j$, under the convention that $\min(\emptyset)=0$. Note that the distinctness condition $I_i\neq I_j$ ensures there can be at most one block equal to $\{0\}$. Moreover, since $0$ belongs exclusively to the fermionic blocks, this inequality forces all fermionic blocks to appear strictly before the non-fermionic ones.

The \emph{monomial symmetric function in noncommuting variables in superspace} indexed by a set superpartition $I=(I_1,\ldots,I_k)$ is defined as
\[m_I=\sum_{\sigma\in\sym_k}(-1)^{\inv_I(\sigma)}M_{I^\sigma},\]where $I^\sigma=(I_{\sigma(1)},\ldots,I_{\sigma(k)})$, and $\inv_I(\sigma)$ is the number of pairs $(p,q)$ such that $p<q$, both $I_p$ and $I_q$ are fermionic blocks, and $\sigma^{-1}(p)>\sigma^{-1}(q)$. For instance, if $I=(\{0,2,4\},\{0,3\},\{1\})$, we have\[\begin{array}{rcl}m_I&=&M_{(\{0,2,4\},\{0,3\},\{1\})}+M_{(\{0,2,4\},\{1\},\{0,3\})}-M_{(\{0,3\},\{0,2,4\},\{1\})}\\[0.1cm]&&\,-\,M_{(\{0,3\},\{1\},\{0,2,4\})}+M_{(\{1\},\{0,2,4\},\{0,3\})}-M_{(\{1\},\{0,3\},\{0,2,4\})}.\end{array}\]Note that $m_I$ is a symmetric function in noncommuting variables in superspace. Indeed, the set $\{m_I\mid I\text{ is a set superpartition}\}$ forms the monomial basis of $\sNCSym$ \cite[Proposition~3.4]{ArGoMa25}. Moreover, following \cite[Definition~3.2]{ArGoMa25}, if $I$ is of bidegree $(n,m)$, then $m_I$ can also be described explicitly as
\[m_I=\sum_{(i_1,\dots,i_m,j_1,\ldots,j_n)} \theta_{i_1}\cdots \theta_{i_m}x_{j_1}\cdots x_{j_n},\]
where the sum runs over all indices satisfying:
\begin{enumerate}
\item For each $s\in [m]$, $i_s=j_t$ if and only if $t\in I_{s}$.
\item For each $s,t\in [n]$, $j_s=j_t$ if and only if $s,t$ belong to the same block of $I$.
\end{enumerate}

The product of the monomial symmetric functions in noncommuting variables in superspace was originally described in \cite[Subsection~3.3]{ArGoMa25}. We restate this rule in terms of set superpartitions as follows. Let $I$ be a set superpartition of bidegree $(n,m)$ and let $J$ be a set superpartition. The product $m_I\,m_J$ is obtained by summing over all set superpartitions arising from admissible fusions between the blocks of $I$ and those of the shifted superpartition $J[n]$, with the restriction that no two fermionic blocks may be merged. Each resulting term is subsequently reordered to satisfy the standard definition of set superpartitions. The sign of each term is determined by the number of inversions of the fermionic blocks generated during this reordering process. For instance, if $I = (\{0\},\{0,3\},\{1,2\})$ and $J=(\{{\blue0},{\blue2}\},\{{\blue1}\})$, we have
\[\begin{array}{rcl}
m_I\,m_J&=&m_{(\{0\},\{0,3\},\{{\blue0},{\blue5}\},\{1,2\},\{{\blue4}\})}-m_{(\{0,3\},\{0,{\blue4}\},\{{\blue0},{\blue5}\},\{1,2\})}+m_{(\{0\},\{0,3,{\blue4}\},\{{\blue0},{\blue5}\},\{1,2\})}\\[0.1cm]
&&\,+\,\,m_{(\{0\},\{0,3\},\{{\blue0},{\blue5}\},\{1,2,{\blue4}\})}+m_{(\{0\},\{0,3\},\{{\blue0},1,2,{\blue5}\},\{{\blue4}\})}-m_{(\{0,3\},\{0,{\blue4}\},\{{\blue0},1,2,{\blue5}\})}\\[0.1cm]
&&\,+\,\,m_{(\{0\},\{0,3,{\blue4}\},\{{\blue0},1,2,{\blue5}\})}-m_{(\{0\},\{{\blue0},{\blue5}\},\{0,3,{\blue4}\},\{1,2\})}-m_{(\{0\},\{{\blue0},{\blue5}\},\{0,1,2,3,{\blue4}\})}.
\end{array}\]

We now describe the coproduct of $\sNCSym$ in this monomial basis. For a set superpartition $I=(I_1,\ldots,I_k)$, a subset $A=\{i_1<\cdots<i_s\}\subseteq[k]$, and a permutation $\sigma$ of $A$, we set $I_A=(I_{i_1},\ldots,I_{i_s})$ and $I_A^\sigma=(I_{\sigma(i_1)},\ldots,I_{\sigma(i_s)})$.
\begin{proposition}\label{029}
Let $I=(I_1,\ldots,I_k)$ be a set superpartition. Then
\[\Delta(m_I)=\sum_{A\subseteq [k]}(-1)^{\inv_I(A)}\,m_{\std(I_A)}\otimes m_{\std(I_{A^c})},\]
where $\inv_I(A)$ is the number of pairs $(p,q)$ such that $p\in A^c$, $q\in A$, $p<q$, and both $I_p$ and $I_q$ are fermionic.
\end{proposition}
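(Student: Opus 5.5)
We compute $\Delta(m_I)$ by expanding $m_I$ in the $M$-basis and applying Proposition~\ref{028} term by term, then regrouping the result into products $m\otimes m$. Write $m_I=\sum_{\sigma\in\sym_k}(-1)^{\inv_I(\sigma)}M_{I^\sigma}$. By Proposition~\ref{028},
\[\Delta(m_I)=\sum_{\sigma\in\sym_k}\sum_{i=0}^k(-1)^{\inv_I(\sigma)}M_{\std(I_{\sigma(1)},\ldots,I_{\sigma(i)})}\otimes M_{\std(I_{\sigma(i+1)},\ldots,I_{\sigma(k)})}.\]
Each pair $(\sigma,i)$ determines the subset $A=\{\sigma(1),\ldots,\sigma(i)\}\subseteq[k]$, a bijection $\tau$ ordering $A$ (the word $\sigma(1)\cdots\sigma(i)$), and a bijection $\rho$ ordering $A^c$. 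Conversely, each triple $(A,\tau,\rho)$ arises from exactly one pair $(\sigma,i)$. So the double sum becomes a sum over $A\subseteq[k]$ of sums over orderings of $A$ and of $A^c$.

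Next I would use two facts about set superpartitions. First, $I_A$ and $I_{A^c}$, after standardization, are again set superpartitions. This holds because the defining condition $\min(I_i\setminus I_j)<\min(I_j\setminus I_i)$ for $i<j$ is preserved under deleting blocks and under the order-preserving relabelling $\std$, since $\std$ fixes $0$. Second, standardization commutes with reordering blocks: $\std(I_A^\tau)=\std(I_A)^{\tau'}$, where $\tau'$ is the induced permutation of $[|A|]$. Hence the inner sums factor as
\[\Big(\sum_{\tau}(\pm)M_{\std(I_A)^{\tau'}}\Big)\otimes\Big(\sum_{\rho}(\pm)M_{\std(I_{A^c})^{\rho'}}\Big).\]

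The main step is the sign bookkeeping. I must show that
\[\inv_I(\sigma)=\inv_{\std(I_A)}(\tau')+\inv_{\std(I_{A^c})}(\rho')+\inv_I(A).\]
Here $\inv_I(\sigma)$ counts pairs of fermionic blocks $p<q$ that appear in reversed order in $I^\sigma$. I would split these pairs into three types:
\begin{enumerate}
\item both blocks in $A$, which gives exactly $\inv_{\std(I_A)}(\tau')$;
\item both blocks in $A^c$, which gives $\inv_{\std(I_{A^c})}(\rho')$;
\item one block in $A$ and one in $A^c$.
\end{enumerate}
For a mixed pair, every block indexed by $A$ occurs before every block indexed by $A^c$ in $I^\sigma$. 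So a mixed pair $p<q$ is inverted exactly when $p\in A^c$ and $q\in A$, and these pairs are counted by $\inv_I(A)$. With this decomposition, the factored sums become $(-1)^{\inv_I(A)}m_{\std(I_A)}\otimes m_{\std(I_{A^c})}$, as claimed.

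The remaining point is the super tensor sign. Proposition~\ref{028} already produces plain tensors $M\otimes M$ with no extra sign, because the $\theta$'s precede the $\vartheta$'s in the chosen order. Factoring $\sum_{\tau,\rho}c_\tau d_\rho\,X_\tau\otimes Y_\rho$ as $(\sum_\tau c_\tau X_\tau)\otimes(\sum_\rho d_\rho Y_\rho)$ is plain bilinearity, so no Koszul sign enters.
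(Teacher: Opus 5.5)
Your proposal is correct and follows essentially the same route as the paper. Both arguments expand $m_I$ in the $M$-basis and apply Proposition~\ref{028}. Both regroup the pairs $(\sigma,i)$ by the subset $A$ of blocks landing in the left tensor factor. Both split the fermionic inversions into those within $A$, those within $A^c$, and the mixed ones, with the mixed pairs giving exactly $\inv_I(A)$. You are slightly more explicit than the paper in checking that $\std(I_A)$ and $\std(I_{A^c})$ are again set superpartitions, and that no Koszul sign arises when factoring the tensor sum.
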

\begin{proof}
Applying the coproduct formula in Proposition~\ref{028} to the monomial basis, we obtain
\[\Delta(m_I)=\sum_{\sigma\in\sym_k}(-1)^{\inv_I(\sigma)}\Delta(M_{I^\sigma})=\sum_{\sigma\in\sym_k}(-1)^{\inv_I(\sigma)}\sum_{s=0}^kM_{\std(I_{\sigma(1)},\ldots,I_{\sigma(s)})}\otimes M_{\std(I_{\sigma(s+1)},\ldots,I_{\sigma(k)})}.\]
Grouping the permutations according to the subset of original indices that appear in the first tensor factor, each pair $(\sigma,s)$ uniquely determines a subset $A\subseteq[k]$ of size $s$, together with permutations $\sigma_A$ of $A$ and $\sigma_{A^c}$ of $A^c$, such that
\[\begin{array}{rcl}
\Delta(m_I)&=&{\displaystyle\sum_{A\subseteq[k]}(-1)^{\inv_I(A)}\sum_{(\sigma_A,\sigma_{A^c})}(-1)^{\inv_{I_{\!A}}(\sigma_A)}M_{\std(I_A^{\sigma_A})}\otimes(-1)^{\inv_{I_{\!A^c}}(\sigma_{A^c})}M_{\std(I_{A^c}^{\sigma_{A^c}})}}\\[0.5cm]
&=&{\displaystyle\sum_{A\subseteq[k]}(-1)^{\inv_I(A)}\underbrace{\Bigl(\,\sum_{\sigma_A}(-1)^{\inv_{I_{\!A}}(\sigma_A)}M_{\std(I_A^{\sigma_A})}\Bigr)}_{m_{\std(I_A)}}\otimes\underbrace{\Bigl(\,\sum_{\sigma_{A^c}}(-1)^{\inv_{I_{\!A^c}}(\sigma_{A^c})}M_{\std(I_{A^c}^{\sigma_{A^c}})}\Bigr)}_{m_{\std(I_{A^c})}}},
\end{array}\]
where the second sum is over all pairs $(\sigma_A,\sigma_{A^c})$ such that $\sigma_A$ is a permutation of $A$ and $\sigma_{A^c}$ is a permutation of $A^c$. This proves the claim.
\end{proof}
For instance, if $I=(\{0,2,4\},\{0,3\},\{1\})$, we have\[\begin{array}{rcl}\Delta(m_I)
&=&1\otimes m_{(\{0,2,4\},\{0,3\},\{1\})}+m_{(\{0,1,2\})}\otimes m_{(\{0,2\},\{1\})}-m_{(\{0,1\})}\otimes m_{(\{0,2,3\},\{1\})}\\[0.1cm]
&&\,+\,m_{(\{1\})}\otimes m_{(\{0,1,3\},\{0,2\})}+m_{(\{0,1,3\},\{0,2\})}\otimes m_{(\{1\})}+m_{(\{0,2,3\},\{1\})}\otimes m_{(\{0,1\})}\\[0.1cm]
&&\,-\,m_{(\{0,2\},\{1\})}\otimes m_{(\{0,1,2\})}+m_{(\{0,2,4\},\{0,3\},\{1\})}\otimes 1.\end{array}\]

Finally, by Proposition~\ref{029}, we obtain the following result.
\begin{proposition}
The algebra of symmetric functions in noncommuting variables in superspace $\sNCSym$ is a sub-Hopf superalgebra of $\sNCQSym$.
\end{proposition}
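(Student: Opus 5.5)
To prove that $\sNCSym$ is a sub-Hopf superalgebra of $\sNCQSym$, I will check three things: $\sNCSym$ is a subspace containing $1$, it is closed under the product, and it is closed under the coproduct. The counit and antipode then come along automatically, with one point to verify at the end.

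\emph{Subspace and unit.} Each $m_I$ is by definition a signed sum of basis elements $M_{I^\sigma}$, so $\sNCSym\subseteq\sNCQSym$. The unit $1=m_{\emptyset}$ lies in $\sNCSym$. The $\Z_2$-grading is compatible: every $I^\sigma$ has the same number of fermionic blocks as $I$, so $m_I$ is homogeneous of the same bidegree as $I$. Hence $\sNCSym$ is a graded sub-superspace.

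\emph{Closure under product.} Invariance under simultaneous permutations of the indices of $x$ and $\theta$ is preserved by products, because $\sigma$ acts by an algebra automorphism of $\Q^\theta\dangle{x}$. So $\sNCSym$ is a subalgebra; this is also the content of the product rule for the $m_I$ recalled from \cite{ArGoMa25}.

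\emph{Closure under coproduct.} This is the main step. Proposition~\ref{029} gives
\[\Delta(m_I)=\sum_{A\subseteq[k]}(-1)^{\inv_I(A)}\,m_{\std(I_A)}\otimes m_{\std(I_{A^c})},\]
so it suffices that each $\std(I_A)$ is again a set superpartition. Let $I_A=(I_{i_1},\ldots,I_{i_s})$.
\begin{itemize}
\item The blocks of $I_A$ are pairwise distinct, since they are distinct blocks of $I$.
\item For $p<q$ in $A$, the inequality $\min(I_p\setminus I_q)<\min(I_q\setminus I_p)$ is inherited from $I$.
\item Standardization is an order-preserving relabelling of the positive elements that fixes $0$. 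It therefore preserves set differences and their minima, including the convention $\min(\emptyset)=0$, and it keeps $\{0\}$ as $\{0\}$.
\end{itemize}
So $\std(I_A)$ is a set superpartition, and likewise $\std(I_{A^c})$. Hence $\Delta(\sNCSym)\subseteq\sNCSym\otimes\sNCSym$.

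\emph{Counit and antipode.} The counit is simply restricted. For the antipode, $\sNCSym$ is a graded connected sub-bialgebra, so its antipode is given recursively by Takeuchi's formula in terms of the product and coproduct. That formula stays inside $\sNCSym$ and agrees with the restriction of the antipode $S$ of $\sNCQSym$ from Theorem~\ref{032}. Therefore $\sNCSym$ is a sub-Hopf superalgebra.

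The only genuinely delicate point is the derivation of Proposition~\ref{029}: it relies on the sign bookkeeping that splits $\inv_I(\sigma)$ into $\inv_I(A)+\inv_{I_A}(\sigma_A)+\inv_{I_{A^c}}(\sigma_{A^c})$. Since that proposition is already established, the remaining work is the routine check that restriction and standardization preserve the superpartition condition.
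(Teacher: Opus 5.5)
Your proposal is correct and takes the same route as the paper. The paper deduces the statement directly from the coproduct formula of Proposition~\ref{029}, relying on the known fact that $\sNCSym$ is a subalgebra; you additionally spell out the routine checks that $\std(I_A)$ is again a set superpartition and that the counit and antipode restrict (the recursive antipode formula you describe is the standard one for connected graded bialgebras, not Takeuchi's closed-form formula, but either stays inside a sub-bialgebra).
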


\section{The $Q$-basis of $\sNCQSym$}\label{024}

In this section, we introduce a new basis for $\sNCQSym$, analogous to the $Q$-basis of $\NCQSym$ defined in \cite[Section 6]{BeZa09}. To this end, we first define a partial order on set supercompositions that generalizes the order established in \cite[Section 6]{BeZa09}. Subsequently, we study the product and coproduct rules for this $Q$-basis. Just as the classical Malvenuto--Reutenauer algebra of permutations \cite[Section 3]{MaRe95} can be obtained as the subalgebra generated by the minimal elements of $\NCQSym$ \cite{DuHiTh02} \cite[Section 6]{BeZa09} \cite[Subsection 2.4]{NoThWi10}, we introduce a Malvenuto--Reutenauer algebra in superspace, defined as the sub-Hopf superalgebra generated by the minimal elements of $\sNCQSym$. Finally, via the projection from $\sNCQSym$ onto $\sQSym$, we establish a formula to compute the product of fundamental quasisymmetric functions in superspace (Theorem~\ref{002}).

\subsection{A partial order on set supercompositions}\label{005}

Given two set supercompositions, we say that $J$ \emph{covers} $I=(I_1,\ldots,I_k)$ if there exists an index $i\in[k-1]$ such that neither $I_i$ nor $I_{i+1}$ is fermionic, $\max(I_i)<\min(I_{i+1})$, and $J=(I_1,\ldots,I_{i-1},I_i\cup I_{i+1},I_{i+2},\ldots,I_k)$. The set of all set supercompositions is partially ordered by the reflexive and transitive closure of this covering relation, denoted by $\preceq$. See Figure~\ref{004}. A sequence $(I_p,\ldots,I_q)$ of consecutive non-fermionic blocks of $I$ satisfying $\max(I_i)<\min(I_{i+1})$ for all $i\in[p,q-1]$ is called \emph{increasing}. By definition, if $J\succeq I$, then each non-fermionic block of $J$ is the union of the blocks within some increasing sequence of $I$, and the fermionic blocks of $J$ are exactly those of $I$. Note that, by restricting this relation to set supercompositions with no fermionic blocks, we naturally obtain the partial order on set compositions given in \cite[Section 6]{BeZa09}, which differs from the classical refinement order \cite[Subsections~1.4.3 and~6.2.3]{AguSwa06}.

\begin{figure}[H]\centering
\begin{tikzpicture}[node distance=2cm]
\node(a)at(+0,4.5){$_{(\{10\},\{{\red3},{\red4}\},\{0\},\{9\},\{0,1,5,7\},\{{\blue2},{\blue6},{\blue8}\})}$};
\node(b)at(-5.5,3.0){$_{(\{10\},\{{\red3},{\red4}\},\{0\},\{9\},\{0,1,5,7\},\{{\blue2},{\blue6}\},\{{\blue8}\})}$};
\node(c)at(+0,3.0){$_{(\{10\},\{{\red3},{\red4}\},\{0\},\{9\},\{0,1,5,7\},\{{\blue2}\},\{{\blue6},{\blue8}\})}$};
\node(d)at(+5.5,3.0){$_{(\{10\},\{{\red3}\},\{{\red4}\},\{0\},\{9\},\{0,1,5,7\},\{{\blue2},{\blue6},{\blue8}\})}$};
\node(e)at(-5.5,1.5){$_{(\{10\},\{{\red3},{\red4}\},\{0\},\{9\},\{0,1,5,7\},\{{\blue2}\},\{{\blue6}\},\{{\blue8}\})}$};
\node(f)at(+0,1.5){$_{(\{10\},\{{\red3}\},\{{\red4}\},\{0\},\{9\},\{0,1,5,7\},\{{\blue2},{\blue6}\},\{{\blue8}\})}$};
\node(g)at(+5.5,1.5){$_{(\{10\},\{{\red3}\},\{{\red4}\},\{0\},\{9\},\{0,1,5,7\},\{{\blue2}\},\{{\blue6},{\blue8}\})}$};
\node(h)at(+0,0){$_{(\{10\},\{{\red3}\},\{{\red4}\},\{0\},\{9\},\{0,1,5,7\},\{{\blue2}\},\{{\blue6}\},\{{\blue8}\})}$};
\draw(a)--(b);\draw(a)--(c);\draw(a)--(d);\draw(b)--(f);\draw(c)--(g);\draw(d)--(g);\draw(f)--(h);\draw(g)--(h);\draw(b)--(e);\draw(c)--(e);\draw(c)--(e);\draw(d)--(f);\draw(e)--(h);
\end{tikzpicture}
\caption{Elements greater than or equal to $(\{10\},\{3\},\{4\},\{0\},\{9\},\{0,1,5,7\},\{2\},\{6\},\{8\})$.}\label{004}
\end{figure}
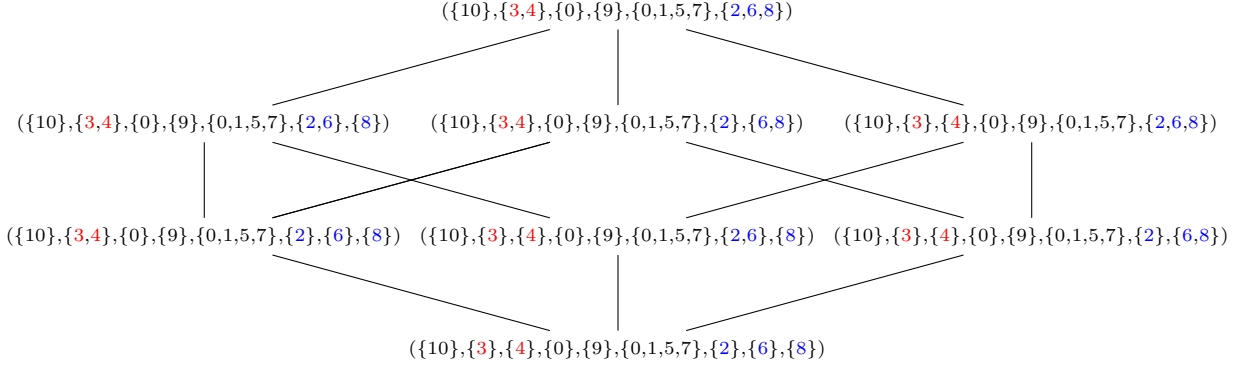
Observe that the minimal elements of this poset are precisely the set supercompositions in which every non-fermionic block is a singleton. We call these minimal elements \emph{superpermutations}. This terminology is naturally justified because if a superpermutation has no fermionic blocks, it can be canonically identified with an ordinary permutation simply by reading its singleton blocks from left to right. In this sense, superpermutations extend the classical notion of permutations to the superspace setting. For the sake of brevity, when a superpermutation has no fermionic blocks, we shall simply refer to it as a \emph{permutation}. The set of all superpermutations is denoted by $\sym$, and the subset of those having bidegree $(n,m)$ is denoted by $\sym_{n,m}$. In particular, $\sym_{n,0}$ coincides with the classical symmetric group $\sym_n$. For any superpermutation $I$, we set $I^{\,\uparrow}=\{J\mid I\preceq J\}$.

Given a superpermutation $I=(I_1,\ldots,I_k)$, a \emph{non-fermionic segment} is a maximal interval of indices $[p,q]\subseteq[k]$ such that $I_i$ is a non-fermionic block for all $i\in[p,q]$. For each non-fermionic segment $[p,q]$, we set $\sigma_{[p,q]}=(a_p,\ldots,a_q)$, where $I_i=\{a_i\}$ for all $i\in[p,q]$. Note that $\sigma_{[p,q]}$ can be naturally viewed as a permutation of its underlying set. We denote by $\alpha_{[p,q]}$ the classical composition determined by the descent set of this permutation \cite[Section~7.19]{St99}. Specifically, if $\Des(\sigma_{[p,q]})=\{i\mid a_{p+i-1}>a_{p+i}\}=\{d_1<\dots<d_s\}$ is nonempty, then $\alpha_{[p,q]}=(d_1,d_2-d_1,\ldots,d_s-d_{s-1},(q-p+1)-d_s)$, otherwise $\alpha_{[p,q]}=(q-p+1)$.

The \emph{dotted composition of a superpermutation} $I$, denoted by $\gamma(I)$, is formed by concatenating, in their natural order of appearance, the dotted parts $(\dot{a})$ with $a=\lvert I_i\rvert-1$ for each fermionic block $I_i$, and the classical compositions $\alpha_{[p,q]}$ for each non-fermionic segment $[p,q]$. Note that if $I$ has no fermionic blocks, then there is a unique non-fermionic segment $[k]$, and the construction of $\gamma(I)$ coincides with the classical association between a permutation and the composition determined by its descents \cite[Section~7.19]{St99}.

For instance, the superpermutation $I=(\{0,4,8\},\{{\red2}\},\{{\red5}\},\{{\red3}\},\{0\},\{{\blue6}\},\{{\blue4}\},\{{\blue7}\})$ has non-fermionic segments $[2,4]$ and $[6,8]$, with associated sequences $\sigma_{[2,4]}=(2,5,3)$ and $\sigma_{[6,8]}=(6,4,7)$. Since the descent sets of these sequences are $\Des(\sigma_{[2,4]})=\{2\}$ and $\Des(\sigma_{[6,8]})=\{1\}$, their corresponding compositions are $\alpha_{[2,4]}=(2,1)$ and $\alpha_{[6,8]}=(1,2)$, respectively. Thus,\[\gamma(I)=(\dot{2},{\red2},{\red1},\dot{0},{\blue1},{\blue2}).\]

We now introduce a map that assigns to each set supercomposition a dotted composition. The \emph{dotted composition associated with a set supercomposition} $I=(I_1,\ldots,I_k)$ is defined as $\alpha(I)=(\alpha_1,\ldots,\alpha_k)$, where, for each $i \in [k]$, we let $a_i=|I_i\setminus\{0\}|$ and set $\alpha_i=a_i$ if $0\notin I_i$, and $\alpha_i=\dot{a}_i$ if $0\in I_i$. Note that if $I$ is a classical set composition, then $\alpha(I)=(|I_1|,\ldots,|I_k|)$. Furthermore, if $J$ is another set composition such that $I\preceq J$, then $\alpha(I)\preceq\alpha(J)$. For instance, given $I=(\{0\},\{{\red3},{\red5}\},\{0,2,4\},\{0,1\})$, we have $\alpha(I)=(\dot{0},{\red2},\dot{2},\dot{1})$.

\begin{proposition}\label{007}
If $I$ is a permutation, then the map $J\mapsto\alpha(J)$ defines an isomorphism between the poset $I^{\,\uparrow}$ of set compositions above $I$ and the poset $\gamma(I)^{\,\downarrow}$ of integer compositions refining $\gamma(I)$. See Figure~\ref{008}.
\end{proposition}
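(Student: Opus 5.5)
Here $I$ is a permutation, i.e.\ a superpermutation with no fermionic blocks, so $I=(\{a_1\},\ldots,\{a_n\})$ and $\gamma(I)=\alpha_{[1,n]}$ is the descent composition of $\sigma=(a_1,\ldots,a_n)$. The plan is to identify both posets with the same Boolean lattice and check that $J\mapsto\alpha(J)$ matches the two identifications.

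\emph{Step 1: the elements of $I^{\,\uparrow}$.} Since $I$ has no fermionic blocks, every $J\succeq I$ is a set composition whose blocks are unions of the blocks of increasing sequences of $I$. Cover steps only merge adjacent blocks, so these sequences are consecutive runs $\{a_p\},\ldots,\{a_q\}$ with $a_p<\cdots<a_q$. Hence each $J\in I^{\,\uparrow}$ is determined by a set of \emph{cut positions} $C(J)\subseteq[n-1]$, namely the positions $j$ between $a_j$ and $a_{j+1}$ where a block ends. This set must contain $\Des(\sigma)$, because a block cannot contain a descent.

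Conversely, I will show that any $C$ with $\Des(\sigma)\subseteq C\subseteq[n-1]$ gives an element of $I^{\,\uparrow}$. The blocks cut out by $C$ are unions of increasing runs, so they are reached from $I$ by successive merges. Each merge is a legal cover because the maximum of an increasing run is its last element and the minimum of the next piece is its first element. Moreover $J\preceq J'$ holds iff $C(J)\supseteq C(J')$: the order is generated by merges, and a merge removes exactly one cut. Thus $I^{\,\uparrow}$ is isomorphic to the interval $\{C\mid \Des(\sigma)\subseteq C\subseteq[n-1]\}$ ordered by reverse inclusion, which is a Boolean lattice of rank $n-1-|\Des(\sigma)|$.

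\emph{Step 2: the elements of $\gamma(I)^{\,\downarrow}$.} By the proof of Proposition~\ref{006}, with no dotted parts, compositions $\beta\preceq\gamma(I)$ correspond to sets $D(\beta)\subseteq[n-1]$ of partial sums with $D(\beta)\supseteq D(\gamma(I))$. Here $D(\gamma(I))=\Des(\sigma)$ by the definition of $\alpha_{[1,n]}$. The order is again reverse inclusion of these sets.

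\emph{Step 3: matching the two sides.} For $J\in I^{\,\uparrow}$, the composition $\alpha(J)=(|J_1|,\ldots,|J_k|)$ has partial-sum set exactly $C(J)$, because the blocks of $J$ occupy consecutive positions of $\sigma$. So under the identifications of Steps 1 and 2, $\alpha$ becomes the identity map on the interval $[\Des(\sigma),[n-1]]$. It is therefore a well-defined bijection that preserves and reflects the order; the forward monotonicity $\alpha(J)\preceq\alpha(J')$ was also noted in the text.

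\emph{Main obstacle.} The delicate point is the first part of Step 1: that every element above $I$ has blocks made of \emph{consecutive} runs in the original order, and that every superset of $\Des(\sigma)$ is realized. I would prove this by induction on the number of cover steps, maintaining the invariant that each block of $J$ is $\{a_p,\ldots,a_q\}$ with $a_p<\cdots<a_q$. Under this invariant, the cover condition $\max(J_i)<\min(J_{i+1})$ is equivalent to the cut between the two blocks not lying in $\Des(\sigma)$. This is the only place where the condition $\max<\min$ is used.
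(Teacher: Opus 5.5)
Your proof is correct, but it takes a different route from the paper's. The paper's argument is abstract. It cites Bergeron--Zabrocki for the fact that $I^{\,\uparrow}$ is a Boolean lattice of rank $|\Asc(\sigma_{[1,n]})|$ and checks that this rank equals $(b_1-1)+\cdots+(b_k-1)$. It then uses Proposition~\ref{006} to see that $\gamma(I)^{\,\downarrow}$ is a Boolean lattice of the same rank. Finally, it concludes that the order-preserving map $J\mapsto\alpha(J)$ must be an isomorphism.

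You instead put explicit coordinates on both sides. The cut sets $C(J)$ and the partial-sum sets $D(\beta)$ both range over the sets between $\Des(\sigma)$ and $[n-1]$, ordered by reverse inclusion. You then observe that $D(\alpha(J))=C(J)$, so $\alpha$ is the identity in these coordinates.

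What your route buys is self-containment and completeness. You prove the Boolean structure of $I^{\,\uparrow}$ directly, via the invariant that blocks are increasing runs of consecutive positions, so that a cover is legal exactly when the removed cut is not a descent. You also get, rather than assume, that $\alpha$ lands in $\gamma(I)^{\,\downarrow}$, is bijective, and reflects the order.

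The paper's last step does not deliver this by itself. A monotone map between Boolean lattices of equal rank need not be injective: on $B_2$, fix the bottom and top and send both atoms to the same atom. So injectivity of $\alpha$ on $I^{\,\uparrow}$ is tacitly needed there, and that is exactly your remark that $J$ is recovered from its block sizes. With injectivity, bijectivity follows by counting. Order reflection then follows because both posets have the same number of comparable pairs.

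The paper's version is shorter and phrases the rank count through ascents, which it reuses in Proposition~\ref{009}. Your coordinate description would extend to that proposition just as easily, one non-fermionic segment at a time.
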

\begin{proof}
Let $I=(\{a_1\},\ldots,\{a_n\})$ with $\Des(I)=\{i_1,\ldots,i_{k-1}\}$. Observe that the maximal element in $I^{\,\uparrow}$ is $K=(\{a_1,\ldots,a_{i_1}\},\ldots,\{a_{i_{k-1}+1},\ldots,a_n\})$. Since $I$ is a permutation, by definition $\alpha(I)=(1,\ldots,1)$, the minimal element in $\gamma(I)^{\,\downarrow}$, and $\alpha(K)=(b_1,\ldots,b_k)=\gamma(I)$, where $b_j=i_j-i_{j-1}$ with $i_0=0$ and $i_k=n$. As mentioned in \cite[Section~5]{BeZa09}, the poset $I^{\,\uparrow}$ is a Boolean lattice of rank $d:=|\Asc(\sigma_{[1,n]})|$, where $\Asc(\sigma_{[1,n]})=\{i\mid a_i<a_{i+1}\}$, the set of ascents of $\sigma_{[1,n]}$. Note that all ascents of $\sigma_{[1,n]}$ occur strictly within the blocks of $K$, and so $d=(b_1-1)+\cdots+(b_k-1)$. On the other hand, Proposition~\ref{006} implies that $\gamma(I)^{\,\downarrow}$ is a Boolean lattice of rank $(b_1-1)+\cdots+(b_k-1)$. Since $J\preceq J'$ implies $\alpha(J)\preceq\alpha(J')$, and both $I^{\,\uparrow}$ and $\gamma(I)^{\,\downarrow}$ are Boolean lattices of the same rank, the map $J\mapsto\alpha(J)$ defines an isomorphism.
\end{proof}
\begin{figure}[H]\centering
\begin{tikzpicture}[node distance=2cm]
\node(a)at(0,2){$_{(\{{\red1},{\red2},{\red4}\},\{3\})}$};
\node(b)at(-1.5,1){$_{(\{{\red1},{\red2}\},\{{\red4}\},\{3\})}$};
\node(c)at(+1.5,1){$_{(\{{\red1}\},\{{\red2},{\red4}\},\{3\})}$};
\node(d)at(0,0){$_{(\{{\red1}\},\{{\red2}\},\{{\red4}\},\{3\})}$};
\node(e)at(3.2,1){$\to$};
\node(f)at(6,2){$_{({\red3},1)}$};
\node(g)at(+4.5,1){$_{({\red2},{\red1},1)}$};
\node(h)at(+7.5,1){$_{({\red1},{\red2},1)}$};
\node(i)at(6,0){$_{({\red1},{\red1},{\red1},1)}$};
\draw(a)--(b);\draw(a)--(c);\draw(b)--(d);\draw(c)--(d);
\draw(f)--(g);\draw(f)--(h);\draw(g)--(i);\draw(h)--(i);
\end{tikzpicture}
\caption{Isomorphism between $I^{\,\uparrow}$ and $\gamma(I)^{\,\downarrow}$ for $I=(\{1\},\{2\},\{4\},\{3\})$.}\label{008}
\end{figure}
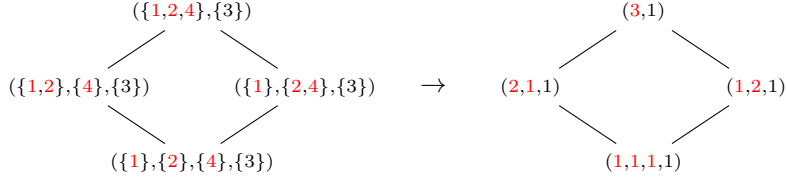

\begin{proposition}\label{009}
If $I$ is a superpermutation, then the map $J\mapsto\alpha(J)$ defines an isomorphism between the poset $I^{\,\uparrow}$ of set compositions above $I$ and the poset $\gamma(I)^{\,\downarrow}$ of integer compositions refining $\gamma(I)$. See Figure~\ref{010}.
\end{proposition}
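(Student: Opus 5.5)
The plan is to reduce to Proposition~\ref{007}, applied segment by segment, together with the product decomposition of Proposition~\ref{006}. Let $I=(I_1,\ldots,I_k)$ be a superpermutation, and let $[p_1,q_1],\ldots,[p_s,q_s]$ be its non-fermionic segments, listed in order. By the definition of the covering relation in Subsection~\ref{005}, merges only ever involve two consecutive non-fermionic blocks. Two such blocks always lie inside the same non-fermionic segment, since a fermionic block separates distinct segments, and no merge ever creates or destroys a fermionic block. Hence every $J\succeq I$ has the form \[J=(F\text{'s unchanged},\,J^{(1)},\ldots,J^{(s)}\text{ interleaved in the original positions}),\] where each $J^{(j)}$ is a set composition above the sequence of singletons $(I_{p_j},\ldots,I_{q_j})$, and $J^{(j)}$ is obtained by merging increasing runs of that sequence. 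The condition $\max<\min$ depends only on the blocks themselves, not on the ambient superpermutation, so the merges in different segments are independent of each other. I would therefore first establish a poset isomorphism
\[I^{\,\uparrow}\;\cong\;\prod_{j=1}^{s}(\sigma_{[p_j,q_j]})^{\uparrow},\]
where each factor is understood after standardizing $\sigma_{[p_j,q_j]}$ to an honest permutation. Standardization preserves relative order, so it preserves the covering relation.

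Next, I would apply Proposition~\ref{007} to each factor. This gives an isomorphism $(\sigma_{[p_j,q_j]})^{\uparrow}\cong\alpha_{[p_j,q_j]}^{\,\downarrow}$ induced by $J^{(j)}\mapsto\alpha(J^{(j)})$; here the descent composition of a sequence is unchanged by standardization, which is why the target is $\alpha_{[p_j,q_j]}^{\,\downarrow}$. On the other side, the proof of Proposition~\ref{006} shows that $\gamma(I)^{\,\downarrow}$ decomposes as a product: its elements are exactly the concatenations in which each dotted part $(\dot a)$ of $\gamma(I)$ stays fixed and each block $\alpha_{[p_j,q_j]}$ is replaced by an element of $\alpha_{[p_j,q_j]}^{\,\downarrow}$. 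This product is ordered componentwise. There is one subtlety: the pieces of $\gamma(I)$ coming from consecutive segments can only be adjacent across a dotted part. Therefore refinement inside $\gamma(I)$ never mixes pieces, and the decomposition of each $\beta\preceq\gamma(I)$ into its pieces is unique. This works because the dotted parts occur at fixed positions and the lengths of the pieces are recovered from their sums $q_j-p_j+1$.

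Finally, I would check that under these two product decompositions the map $J\mapsto\alpha(J)$ equals the product of the segmentwise maps of Proposition~\ref{007}. It sends each fermionic block $I_i$ to $(\dot a)$ with $a=|I_i|-1$, which is exactly the corresponding part of $\gamma(I)$, and it sends $J^{(j)}$ to $\alpha(J^{(j)})$, with the pieces concatenated in the same order. A product of poset isomorphisms is a poset isomorphism, which gives the claim.

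The main obstacle is the first step: justifying carefully that $I^{\,\uparrow}$ really splits as a product. This means checking that any chain of covers from $I$ to $J$ can be reorganized segment by segment, and conversely that independent choices in each segment always assemble into a valid element above $I$. I expect both directions to follow directly from the locality of the covering relation.
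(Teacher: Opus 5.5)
Your proposal is correct and follows essentially the same route as the paper: split $I^{\,\uparrow}$ into a product over the non-fermionic segments (after standardization), apply Proposition~\ref{007} to each segment, and match this with the product decomposition of $\gamma(I)^{\,\downarrow}$ obtained as in Proposition~\ref{006}. Your version is more explicit than the paper's on two points, namely that consecutive pieces of $\gamma(I)$ are always separated by dotted parts, and that $J\mapsto\alpha(J)$ is literally the product of the segmentwise maps.
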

\begin{proof}
Let $I=(\{a_1\},\ldots,\{a_n\})$, and let $[p,q]$ be a non-fermionic segment of $I$. We set $I_{p,q}=(\{a_p\},\ldots,\{a_q\})$ and denote by $J_{p,q}$ the unique permutation $(\{b_p\},\ldots,\{b_q\})$ on $[q-p+1]$ such that $a_i<a_j$ if and only if $b_i<b_j$. This implies that the posets $I_{p,q}^{\,\uparrow}$ and $J_{p,q}^{\,\uparrow}$ are isomorphic, and so their descent compositions coincide, that is, $\gamma(I_{p,q})=\gamma(J_{p,q})$. Since the elements of $I^{\,\uparrow}$ are obtained by merging blocks belonging to each $I_{p,q}$ independently, we obtain that $I^{\,\uparrow}$ is isomorphic to $J_{p_1,q_1}^{\,\uparrow}\times\cdots\times J_{p_r,q_r}^{\,\uparrow}$, where $[p_1,q_1],\ldots,[p_r,q_r]$ are the non-fermionic segments of $I$. On the other hand, proceeding as in the proof of Proposition~\ref{006}, we obtain that $\gamma(I)^{\,\downarrow}$ is isomorphic to $\gamma(J_{p_1,q_1})^{\,\downarrow}\times\cdots\times\gamma(J_{p_r,q_r})^{\,\downarrow}$. By Proposition~\ref{007}, the local map induced by $\alpha$ defines an isomorphism between $J_{p_i,q_i}^{\,\uparrow}$ and $\gamma(J_{p_i,q_i})^{\,\downarrow}$ for every $[p_i,q_i]$. Since the map $J\mapsto\alpha(J)$ acts independently on $I_{p_i,q_i}$ and preserves the fermionic blocks, it defines the desired poset isomorphism, and the result follows.
\end{proof}
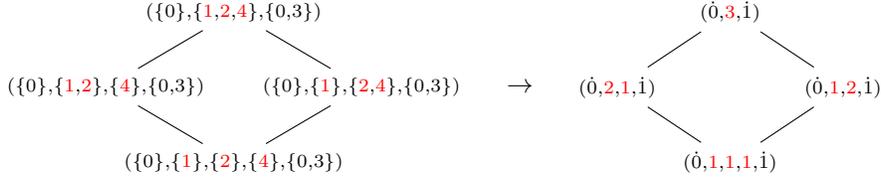
\begin{figure}[H]\centering
\begin{tikzpicture}[node distance=2cm]
\node(a)at(-0.6,2){$_{(\{0\},\{{\red1},{\red2},{\red4}\},\{0,3\})}$};
\node(b)at(-2.3,1){$_{(\{0\},\{{\red1},{\red2}\},\{{\red4}\},\{0,3\})}$};
\node(c)at(+1.1,1){$_{(\{0\},\{{\red1}\},\{{\red2},{\red4}\},\{0,3\})}$};
\node(d)at(-0.6,0){$_{(\{0\},\{{\red1}\},\{{\red2}\},\{{\red4}\},\{0,3\})}$};
\node(e)at(3.2,1){$\to$};
\node(f)at(6,2){$_{(\dot{0},{\red3},\dot{1})}$};
\node(g)at(+4.5,1){$_{(\dot{0},{\red2},{\red1},\dot{1})}$};
\node(h)at(+7.5,1){$_{(\dot{0},{\red1},{\red2},\dot{1})}$};
\node(i)at(6,0){$_{(\dot{0},{\red1},{\red1},{\red1},\dot{1})}$};
\draw(a)--(b);\draw(a)--(c);\draw(b)--(d);\draw(c)--(d);
\draw(f)--(g);\draw(f)--(h);\draw(g)--(i);\draw(h)--(i);
\end{tikzpicture}
\caption{Isomorphism between $I^{\,\uparrow}$ and $\gamma(I)^{\,\downarrow}$ for $I=(\{0\},\{1\},\{2\},\{4\},\{0,3\})$.}\label{010}
\end{figure}

\subsection{Product and coproduct rules}\label{040}

We now introduce the $Q$-basis, which is indexed by set supercompositions and is constructed using the partial order established in Subsection~\ref{005}. The \emph{$Q$-function} associated with a set supercomposition $I$ is defined by\begin{equation}\label{012}Q_I=\sum_{J\succeq I}M_J.\end{equation}For instance, if $I=(\{0,4,5\},\{1\},\{3\},\{2\},\{0\},\{7\},\{5\},\{6\})$, we have\[\begin{array}{rcl}
Q_I&=&M_{(\{0,4,5\},\{1\},\{3\},\{2\},\{0\},\{7\},\{5\},\{6\})}+M_{(\{0,4,5\},\{1,3\},\{2\},\{0\},\{7\},\{5\},\{6\})}\\[0.1cm]&&\quad+\,\,M_{(\{0,4,5\},\{1\},\{3\},\{2\},\{0\},\{7\},\{5,6\})}+M_{(\{0,4,5\},\{1,3\},\{2\},\{0\},\{7\},\{5,6\})}.\end{array}\]Applying the M\"{o}bius inversion formula \cite[Section~3.7]{St97} to the Boolean intervals in the poset of set supercompositions, we obtain\[M_I=\sum_{J\succeq I} (-1)^{\ell(I)-\ell(J)}Q_J,\]and thus the family $\{Q_I\mid I\text{ is a set supercomposition}\}$ forms a linear basis for $\sNCQSym$.

The product of the functions $Q_I$ can be described by an extension of the usual shuffle, which we call super-shuffles. A set supercomposition $K$ is called an \emph{$(I,J)$-super-shuffle} if it can be obtained from an $(I,J)$-shuffle $L$ by a possibly empty sequence of disjoint mergers of consecutive blocks $L_p,\ldots,L_q$ of $L$ into their union $L_p \cup \cdots \cup L_q$, according to the following rules:
\begin{enumerate}
\item $L_p$ is a fermionic block coming from $I$, and $(L_{p+1},\ldots,L_q)$ is an increasing sequence of $J$.
\item $L_q$ is a fermionic block coming from $J$, and $(L_p,\ldots,L_{q-1})$ is an increasing sequence of $I$.
\end{enumerate}
The set of all $(I,J)$-super-shuffles is denoted by $\SSh(I,J)$. As with quasi-shuffles, the \emph{sign} of an $(I,J)$-super-shuffle $K$ is defined as $\sgn(K)=(-1)^{\varepsilon(L)}$, where $L$ is the unique $(I,J)$-shuffle from which $K$ is obtained. For instance, if $I=(\{2,4\},\{0,1\},\{3\},\{5\})$ and $J=(\{{\red1}\},\{{\red0}\})$, the following supercompositions are $(I,J)$-super-shuffles of sign $1$, where the first sequence is the unique $(I,J)$-shuffle from which the others are obtained:
\[\begin{array}{ccc}
(\{2,4\},\{0,1\},\{{\red6}\},\{3\},\{5\},\{{\red0}\}) & (\{2,4\},\{0,1,{\red6}\},\{3\},\{5\},\{{\red0}\}) & (\{2,4\},\{0,1\},\{{\red6}\},\{3\},\{{\red0},5\}) \\[0.1cm]
(\{2,4\},\{0,1\},\{{\red6}\},\{{\red0},3,5\}) & (\{2,4\},\{0,1,{\red6}\},\{3\},\{{\red0},5\}) & (\{2,4\},\{0,1,{\red6}\},\{{\red0},3,5\})
\end{array}\]
Note that if $K$ is an $(I,J)$-super-shuffle, then each non-fermionic block of $K$ coincides with a non-fermionic block of $I$ or $J[n]$. Furthermore, each fermionic block of $K$ is either of the form $(I_p\cup\cdots\cup I_q)\cup J_b[n]$ or $I_a\cup(J_s[n]\cup\cdots\cup J_t[n])$, where $I_a$ and $J_b$ are fermionic blocks of $I$ and $J$, respectively, and $(I_p,\ldots,I_q)$ and $(J_s,\ldots,J_t)$ are possibly empty increasing sequences of $I$ and $J$.

\begin{theorem}\label{011}
Let $I$ and $J$ be two set supercompositions. Then
$$Q_I\,Q_J=\sum_{K\in\SSh(I,J)}\sgn(K)\,Q_K.$$
\end{theorem}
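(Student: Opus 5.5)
We expand both sides in the monomial basis and compare coefficients. By definition~\eqref{012} and Proposition~\ref{001},
\[Q_IQ_J=\sum_{I'\succeq I}\sum_{J'\succeq J}M_{I'}M_{J'}=\sum_{I'\succeq I}\sum_{J'\succeq J}\sum_{K'\in\QSh(I',J')}\sgn(K')M_{K'},\]
while the right-hand side equals $\sum_{K\in\SSh(I,J)}\sgn(K)\sum_{K'\succeq K}M_{K'}$. It therefore suffices to show that, for every set supercomposition $K'$, the signed number of triples $(I',J',K')$ with $I'\succeq I$, $J'\succeq J$, $K'\in\QSh(I',J')$ equals the signed number of pairs $(K,K')$ with $K\in\SSh(I,J)$ and $K\preceq K'$. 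We will construct a sign-preserving bijection between these two index sets.

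\textbf{Forward map.} Start from a triple $(I',J',K')$. The quasi-shuffle $K'$ comes from a unique $(I',J')$-shuffle $L'$ together with mergers of a block of $I'$ with an adjacent block of $J'[n]$, never both fermionic. Each non-fermionic block of $I'$ is a union $I_p\cup\cdots\cup I_q$ of an increasing sequence of $I$, and similarly for $J'$. A merged block of $K'$ then has one of three forms.
\begin{itemize}
\item Non-fermionic $\cup$ non-fermionic. We split it back into its two pieces. Since every element of $I$ is smaller than every element of $J[n]$, the two pieces arranged as (block from $I$, block from $J$) form an increasing sequence, so this block is recovered in $K'$ by a $\preceq$-merge.
\item Fermionic $I_a\cup(J_s\cup\cdots\cup J_t)[n]$. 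This is exactly the super-shuffle rule (1).
\item $(I_p\cup\cdots\cup I_q)\cup J_b[n]$. This is exactly rule (2).
\end{itemize}
Next we refine every non-fermionic block of $I'$ and $J'$ into its constituent increasing sequence. This defines a super-shuffle $K$ of $I$ and $J$ with $K\preceq K'$.

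\textbf{Inverse map.} Given $K\preceq K'$ with $K\in\SSh(I,J)$, each non-fermionic block of $K'$ is the union of an increasing run of $K$, whose blocks come from $I$ or from $J[n]$. Because elements of $J[n]$ exceed those of $I$, such a run consists of an increasing run from $I$ followed by an increasing run from $J[n]$. Coarsening $I$ and $J$ accordingly determines $I'$ and $J'$, and the single possible junction between the two runs is a quasi-shuffle merger. Fermionic blocks of $K'$ coincide with those of $K$, so they determine how $I'$ and $J'$ absorb increasing runs into fermionic blocks.

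\textbf{Signs.} Refinements and coarsenings among non-fermionic blocks leave the relative order of fermionic blocks unchanged. Hence $\sgn(K')$, computed from $L'$, agrees with $\sgn(K)$, computed from $L$.

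\textbf{Main obstacle.} The hardest step is showing that the correspondence is well defined and bijective: the decomposition of each block of $K'$ must be forced uniquely. Under rule (2), for instance, we must check that $I'$ absorbs the run $I_p,\ldots,I_q$ into a single non-fermionic block, rather than that block of $K'$ arising in some other way. The plan is to settle this with the observation that $\max$ of any block from $I$ is less than $\min$ of any block from $J[n]$, which rules out interleavings. If ambiguity remains, I would fall back on an argument via the Boolean interval structure of Proposition~\ref{009}.
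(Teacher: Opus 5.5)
Your proposal is correct and is essentially the paper's own proof: you expand both sides in the $M$-basis and match triples $(I',J',K')$ with pairs $(K,K')$ by splitting each block of $K'$ into its $I$-part and its $J[n]$-part, refining only the non-fermionic blocks of $K'$ to recover $K$. The uniqueness you flag as the main obstacle is immediate, and no Boolean-interval fallback is needed: a block of a quasi-shuffle contains at most one block of $I'$ and at most one of $J'[n]$, so $I'$ and $J'$ are read off directly as the sequences of nonempty $I$-parts and $J$-parts of the blocks of $K'$ (blocks equal to $\{0\}$ are attributed via the underlying shuffle, a bookkeeping point the paper also leaves implicit).
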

\begin{proof}
By the definition of the $Q$-basis in~\eqref{012} and the product rule for the $M$-basis in Proposition~\ref{001}, we can expand both sides of the desired identity in terms of the $M$-basis:\[Q_I Q_J=\Bigl(\sum_{I'\succeq I}M_{I'}\Bigr)\Bigl(\sum_{J'\succeq J}M_{J'}\Bigr)=\sum_{I'\succeq I}\sum_{J'\succeq J}\sum_{L\in\QSh(I',J')}\sgn(L)\,M_L\]and\[\sum_{K\in\SSh(I,J)}\sgn(K)\,Q_K=\sum_{K\in\SSh(I,J)}\sgn(K)\sum_{K'\succeq K}M_{K'}.\]Thus, it suffices to construct a sign-preserving bijection between the set of triples $(I',J',L)$ such that $I'\succeq I$, $J'\succeq J$, and $L\in\QSh(I',J')$, and the set of pairs $(K,K')$ such that $K\in\SSh(I,J)$ and $K'\succeq K$.

Let $K\in\SSh(I,J)$, where $I$ has bidegree $(n,m)$, and let $K'\succeq K$. We characterize the blocks of $K'$. By definition, each non-fermionic block $B$ of $K'$ is the union of an increasing sequence of blocks of $K$. Since each non-fermionic block of $K$ coincides with a non-fermionic block of either $I$ or $J$, we have that\[B=(I_p\cup\cdots\cup I_q)\cup(J_s[n]\cup\cdots\cup J_t[n]),\]where $(I_p,\ldots,I_q)$ and $(J_s,\ldots,J_t)$ are possibly empty increasing sequences of consecutive blocks of $I$ and $J$, respectively. On the other hand, the fermionic blocks of $K'$ are exactly those of $K$. Hence, each fermionic block $B$ of $K'$ is of one of the forms\[B=(I_p\cup\cdots\cup I_q)\cup J_b[n]\quad\text{or}\quad B=I_a\cup(J_s[n]\cup\cdots\cup J_t[n]),\]where $I_a$ and $J_b$ are fermionic blocks of $I$ and $J$, respectively, and the sequences $(I_p,\ldots,I_q)$, $(J_s,\ldots,J_t)$ are as above. Thus, every block $B$ of $K'$ can be written as $B=B_I\cup B_J[n]$, where $B_I$ consists of the elements coming from blocks of $I$, and $B_J$ consists of the elements coming from blocks of $J$. Now, let $I'$ be the set supercomposition formed by extracting the nonempty sets $B_I$ from the blocks of $K'$, preserving their relative order. Similarly, let $J'$ be formed by extracting the nonempty sets $B_J$ and shifting their elements back by $n$, that is, $B_J[-n]$, preserving their relative order. By the definition of the partial order, it follows that $I'\succeq I$ and $J'\succeq J$. Taking $L=K'$, we obtain $L\in\QSh(I',J')$, which yields the desired triple $(I',J',L)$.

Conversely, given $I'\succeq I$, $J'\succeq J$, and $L\in\QSh(I',J')$, each block $B$ of $L$ is uniquely of one of the following types:\[\text{(1) }\,B=(I_p\cup\cdots\cup I_q)\cup(J_s[n]\cup\cdots\cup J_t[n])\text{ (2) }\,B=(I_p\cup\cdots\cup I_q)\cup J_b[n] \text{ (3) }\,B=I_a\cup(J_s[n]\cup\cdots\cup J_t[n]),\]where $(I_p,\ldots,I_q)$ and $(J_s,\ldots,J_t)$ are possibly empty increasing sequences of $I$ and $J$, and $I_a,J_b$ are fermionic blocks of $I$ and $J$, respectively. Blocks of type~(1) are precisely the non-fermionic blocks of $L$. Splitting each block of type~(1) into the corresponding blocks of $I$ and $J$, and preserving the remaining blocks, yields $K\in\SSh(I,J)$; the relative order is preserved and $I$-blocks are placed before $J$-blocks within each split block. By construction, $K':=L\succeq K$. This recovers $(K,K')$ and completes the proof.
\end{proof}

Note that if $I$ and $J$ have only fermionic blocks, then no mergers are allowed. In this case, the product formula in the $Q$-basis reduces to the \emph{signed shuffle product}, where the sign is determined by the relative order of the fermionic blocks. For instance, if $I=(\{0,2\},\{0,1,3\})$ and $J=(\{{\red0},{\red1},{\red2}\},\{0\})$, we have\[\begin{array}{rcl}
Q_I\,Q_J &=& Q_{(\{0,2\},\{0,1,3\},\{{\red0},{\red4},{\red5}\},\{{\red0}\})}-Q_{(\{0,2\},\{{\red0},{\red4},{\red5}\},\{0,1,3\},\{{\red0}\})}+Q_{(\{0,2\},\{{\red0},{\red4},{\red5}\},\{{\red0}\},\{0,1,3\})}\\[0.1cm]&&\quad+\,Q_{(\{{\red0},{\red4},{\red5}\},\{0,2\},\{0,1,3\},\{{\red0}\})}-Q_{(\{{\red0},{\red4},{\red5}\},\{0,2\},\{{\red0}\},\{0,1,3\})}+Q_{(\{{\red0},{\red4},{\red5}\},\{{\red0}\},\{0,2\},\{0,1,3\})}.
\end{array}\]

We now describe the coproduct structure of $\sNCQSym$ in the $Q$-basis. As in the classical theory of quasi-symmetric functions, the coproduct is defined by deconcatenating a set supercomposition into an initial and a final segment, followed by a standardization procedure on each part.
\begin{proposition}\label{013}
Let $I=(I_1,\ldots,I_k)$ be a set supercomposition. Then the coproduct of $Q_I$ is given by\[\Delta(Q_I)=\sum_{i=0}^kQ_{\std(I_1,\ldots,I_i)}\otimes Q_{\std(I_{i+1},\ldots,I_k)}.\]
\end{proposition}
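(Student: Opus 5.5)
The plan is to expand $Q_I$ in the monomial basis via~\eqref{012}, apply the coproduct formula of Proposition~\ref{028} termwise, and then regroup the resulting double sum back into products of $Q$-functions. Explicitly,
\[\Delta(Q_I)=\sum_{J\succeq I}\Delta(M_J)=\sum_{J\succeq I}\;\sum_{j=0}^{\ell(J)}M_{\std(J_1,\ldots,J_j)}\otimes M_{\std(J_{j+1},\ldots,J_{\ell(J)})}.\]
The right-hand side of the claimed identity expands as
\[\sum_{i=0}^k\;\sum_{A\succeq\std(I_1,\ldots,I_i)}\;\sum_{B\succeq\std(I_{i+1},\ldots,I_k)}M_A\otimes M_B.\]
It therefore suffices to exhibit a bijection between the pairs $(J,j)$ with $J\succeq I$ and $0\le j\le\ell(J)$, and the triples $(i,A,B)$ appearing in the second sum. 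This bijection must satisfy $\std(J_1,\ldots,J_j)=A$ and $\std(J_{j+1},\ldots)=B$. No signs arise, since Proposition~\ref{028} carries none.

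The key structural step is to describe how a cut of $J\succeq I$ corresponds to a cut of $I$. Every block of $J$ is either a fermionic block of $I$ or the union of an increasing sequence of consecutive non-fermionic blocks of $I$. So $J$ is obtained from $I$ by grouping consecutive blocks into intervals. Cutting $J$ after its $j$th block therefore means cutting $I$ after some index $i$ that lies at a boundary between these groups. Restricting $J$ to the blocks before the cut gives a set supercomposition $J'\succeq(I_1,\ldots,I_i)$, and the blocks after the cut give $J''\succeq(I_{i+1},\ldots,I_k)$.

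Conversely, given $i$, $A'\succeq(I_1,\ldots,I_i)$ and $B'\succeq(I_{i+1},\ldots,I_k)$, the concatenation $J=A'B'$ satisfies $J\succeq I$, and it is recovered by cutting after $\ell(A')$. The two maps are inverse to each other, because a group never straddles the cut, so the pair $(J,j)$ determines $i$ uniquely.

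Next we must check that standardization commutes with the order. Specifically, the map $A'\mapsto\std(A')$ should be a bijection from $\{A'\succeq(I_1,\ldots,I_i)\}$ onto $\{A\succeq\std(I_1,\ldots,I_i)\}$, and likewise for the tail. This holds because standardization is an order-preserving relabelling of the positive elements. It preserves fermionicity, and it preserves the conditions $\max(I_a)<\min(I_{a+1})$ that define coverings; since it commutes with unions, it preserves the covering relation in both directions.

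I expect the main obstacle to be the first step: verifying carefully that cuts of $J$ correspond bijectively to pairs consisting of a cut of $I$ and a splitting of the coarsening, with no double counting. The point needing care is that a cut of $I$ inside a merged increasing run of $J$ does not correspond to any cut of $J$. It is nonetheless still counted on the right-hand side, through the term in which that run is split into two pieces. The bijection must send such a split run to a different $J$, namely the one where the run is broken at the cut. I would handle this by fixing $i$ first and then noting that $\{J\succeq I:\ J\text{ has a block boundary after } I_i\}$ is exactly the set of concatenations $A'B'$.
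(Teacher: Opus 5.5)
Your proposal is correct and follows essentially the same route as the paper. Both arguments expand $Q_I$ in the $M$-basis, apply Proposition~\ref{028}, group the pairs $(J,s)$ by the cut $i$ of $I$ at which they split (the paper's sets $\Cut(i)$, defined by $J_1\cup\cdots\cup J_s=I_1\cup\cdots\cup I_i$), and match each group bijectively with pairs $K\succeq\std(I_1,\ldots,I_i)$, $L\succeq\std(I_{i+1},\ldots,I_k)$ via order-preserving relabelling and concatenation. Your explicit check that standardization is an order isomorphism on these upper sets spells out a step the paper leaves implicit.
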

\begin{proof}
Applying~\eqref{012} and the coproduct formula in Proposition~\ref{028} to the monomial basis, we obtain\[\Delta(Q_I)=\sum_{J\succeq I}\Delta(M_J)=\sum_{J\succeq I}\sum_{s=0}^{\ell(J)}M_{\std(J_1,\dots,J_s)}\otimes M_{\std(J_{s+1},\dots,J_{\ell(J)})}.\]On the other hand, for each $i\in [k]_0$, we have\[Q_{\std(I_1,\ldots,I_i)}\otimes Q_{\std(I_{i+1},\ldots,I_k)}=\Bigl(\,\sum_{K\succeq\std(I_1,\ldots,I_i)}\!\!M_K\Bigr)\otimes\Bigl(\,\sum_{L\succeq\std(I_{i+1},\ldots,I_k)}\!\!M_L\Bigr).\]

Now, fix $i\in [k]_0$ and define $\Cut(i)=\{(J,s)\mid J\succeq I,\,s\in[\ell(J)]_0,\text{ and }J_1\cup\cdots\cup J_s=I_1\cup\cdots\cup I_i\}$. Since each $J\succeq I$ is obtained by merging adjacent blocks of $I$, it follows that for any such $J$, there is at most one index $s$ satisfying this condition. Moreover, $\Cut(i)$ consists of the pairs $(J,s)$ with $J\succeq I$ such that every block of $J$ is contained either in $I_1\cup\cdots\cup I_i$ or in $I_{i+1}\cup\cdots\cup I_k$. Thus, for $(J,s)\in\Cut(i)$, we have $\std(J_1,\ldots,J_s)\succeq\std(I_1,\dots,I_i)$ and $\std(J_{s+1},\ldots,J_{\ell(J)})\succeq\std(I_{i+1},\dots,I_k)$.

Conversely, given $K\succeq\std(I_1,\ldots,I_i)$ and $L\succeq\std(I_{i+1},\ldots,I_k)$, let $\tilde{K}$ and $\tilde{L}$ be the unique set supercompositions obtained by relabeling the elements of $K$ and $L$ via the unique order-preserving bijections onto $I_1\cup\cdots\cup I_i$ and $I_{i+1}\cup\cdots\cup I_k$, respectively. The concatenation $J:=(\tilde{K},\tilde{L})$ then defines an element $J\succeq I$ such that $(J,\ell(K))\in\Cut(i)$.

This construction establishes a bijection between $\Cut(i)$ and the pairs $(K,L)$ as described above. Therefore, the contribution to $\Delta(Q_I)$ coming from $\Cut(i)$ is\[\sum_{K\succeq\std(I_1,\ldots,I_i)} \sum_{L\succeq\std(I_{i+1},\ldots,I_k)}M_K\otimes M_L = Q_{\std(I_1,\ldots,I_i)}\otimes Q_{\std(I_{i+1},\ldots,I_k)}.\]Since each pair $(J,s)$ in the expansion of $\Delta(Q_I)$ belongs to exactly one $\Cut(i)$, summing these contributions over all $i\in[k]_0$ yields the desired result.
\end{proof}

\subsection{Hopf superalgebra of superpermutations}\label{041}

Here we give a superspace analogue of the algebra of Malvenuto--Reutenauer as the sub-Hopf superalgebra indexed by superpermutations.

Let $\sFQSym$ be the subspace of $\sNCQSym$ spanned by the family of functions $Q_I$ with $I$ a superpermutation. Elements in $\sFQSym$ will be called \emph{free quasisymmetric functions in superspace}.

\begin{proposition}\label{033}
The space $\sFQSym$ is a Hopf subalgebra of $\sNCQSym$.
\end{proposition}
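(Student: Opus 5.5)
The plan is to verify closure of $\sFQSym$ under the product, the coproduct, the unit and the counit, using the explicit formulas of Theorem~\ref{011} and Proposition~\ref{013}. The antipode then comes for free: $\sFQSym$ is a graded sub-bialgebra of the graded connected Hopf superalgebra $\sNCQSym$ (Theorem~\ref{032}), with degree-zero component $\Q\,Q_{()}$, so the antipode is given by the usual recursive formula (Takeuchi's formula) in terms of products and the coproduct. It therefore preserves any graded connected sub-bialgebra. The unit $1=Q_{()}$ is indexed by the empty superpermutation, and the counit restricts trivially. The parity grading~\eqref{030} is respected because each $Q_I$ is homogeneous of bidegree equal to that of $I$.

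\textbf{Coproduct.} By Proposition~\ref{013}, $\Delta(Q_I)$ is a sum of terms $Q_{\std(I_1,\ldots,I_i)}\otimes Q_{\std(I_{i+1},\ldots,I_k)}$. If $I$ is a superpermutation, every non-fermionic block of $I$ is a singleton. Any consecutive subsequence of blocks still has this property, and standardization is an order-preserving relabelling of the nonzero elements that leaves block sizes and fermionic status unchanged. Hence both tensor factors are indexed by superpermutations, and $\Delta(\sFQSym)\subseteq\sFQSym\otimes\sFQSym$.

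\textbf{Product.} By Theorem~\ref{011}, $Q_IQ_J=\sum_{K\in\SSh(I,J)}\sgn(K)Q_K$, so it suffices to show that every $(I,J)$-super-shuffle of two superpermutations is a superpermutation. By the remark following the definition of super-shuffles, each non-fermionic block of $K$ coincides with a non-fermionic block of $I$ or of $J[n]$, and is therefore a singleton. Mergers only produce fermionic blocks, of the form $(I_p\cup\cdots\cup I_q)\cup J_b[n]$ or $I_a\cup(J_s[n]\cup\cdots\cup J_t[n])$, and these impose no singleton condition. Thus $K$ is a superpermutation.

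\textbf{Expected difficulty.} The step requiring the most care is confirming that the rules (1)--(2) defining super-shuffles never merge two non-fermionic blocks. This is immediate from the definition, since every merger contains exactly one fermionic block at its start or end. The remaining check is that $\sgn(K)$ and the super tensor product signs are compatible with the restriction, which is automatic because we are restricting formulas already established in $\sNCQSym$.
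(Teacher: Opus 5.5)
Your proposal is correct and follows the paper's proof. Closure under the product comes from Theorem~\ref{011}, since super-shuffle mergers only create fermionic blocks and non-fermionic blocks stay singletons. Closure under the coproduct comes from Proposition~\ref{013}, since deconcatenation followed by standardization preserves superpermutations. Your extra treatment of the unit and the antipode (via Takeuchi's formula) fills in a point the paper leaves implicit. Note only that connectedness requires grading by total degree $n+m$ rather than by $n$ alone, since $\sNCQSym_{0,m}\neq 0$ for $m>0$.
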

\begin{proof}
By Theorem~\ref{011}, the product $Q_IQ_J$ expands in the $Q$-basis as a sum over $(I,J)$-super-shuffles. By the definition of super-shuffles, if $I$ and $J$ are superpermutations, then every $(I,J)$-super-shuffle is also a superpermutation. Consequently, $\sFQSym$ is closed under the product and forms a subalgebra. Furthermore, by Proposition~\ref{013}, the coproduct $\Delta(Q_I)$ is defined by deconcatenation and standardization. Since the standardization of any subsequence of a superpermutation is clearly a superpermutation, $\sFQSym$ is closed under the coproduct. Thus, $\sFQSym$ is a sub-Hopf superalgebra of $\sNCQSym$.
\end{proof}
We call $\sFQSym$ the \emph{Hopf superalgebra of free quasisymmetric functions in superspace}.

\medskip

Following the construction for the classical Malvenuto--Reutenauer algebra in \cite[Subsection~1.3]{AgSo02}, now we define a monomial basis for the algebra $\sFQSym$. To this goal we first need to introduce the inversion set of a superpermutation and a superspace version of the left weak order.

Given a superpermutation $I$, we define its \emph{associated permutation} $w(I)\in\sym_\infty$ as the word obtained by concatenating the nonzero elements of its blocks from left to right, where the elements within each block are arranged in strictly increasing order. For instance, if $I=(\{3\},\{0,{\blue1},{\blue5}\},\{4\},\{0\},\{2\})$, we have $w(I)=3{\blue15}42$.

The \emph{inversion set} of a superpermutation $I$ is defined as the inversion set of its associated permutation, that is, $\inv(I)=\inv(w(I))$. Given two superpermutations $I$ and $J$, we say that $I$ precedes $J$ in the \emph{super left weak order}, denoted by $I\preceq_WJ$, if $\alpha(I)=\alpha(J)$ and $\inv(I)\subseteq\inv(J)$. This relation defines a partial order on the set of all superpermutations $\sym$. Notice that the condition $\alpha(I)=\alpha(J)$ ensures that comparable superpermutations have the exact same block structure and fermionic components.

Before introducing the monomial basis for $\sFQSym$, we prove the following proposition, which ensures that the poset defined by $\preceq_W$ is isomorphic to a disjoint union of intervals of the classical left weak order on symmetric groups. Consequently, the M\"{o}bius function $\mu_W$ naturally coincides with the classical one, that is, $\mu_W(I,J)=\mu(w(I),w(J))$ if $I\preceq_W J$, and $0$ otherwise.
\begin{proposition}\label{019}
For each $\sigma\in\alpha(\sym)$, we have $\alpha^{-1}(\{\sigma\})=[I, J]$, where
\begin{enumerate}
\item $I=(I_1,\ldots,I_k)$ is the unique superpermutation such that $\alpha(I)=\sigma$ and, for each $i\in[k-1]$, every nonzero element in $I_i$ is strictly smaller than every nonzero element in $I_{i+1}$.
\item $J=(J_1,\ldots,J_k)$ is the unique superpermutation such that $\alpha(J)=\sigma$ and, for each $i\in[k-1]$, every nonzero element in $J_i$ is strictly greater than every nonzero element in $J_{i+1}$.
\end{enumerate}
Furthermore, the poset $[I,J]$ is isomorphic to the interval $[w(I),w(J)]$ in the classical left weak order via the map $w$. See Figure~\ref{018}.
\end{proposition}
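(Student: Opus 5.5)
The plan is to reduce everything to a statement about ordinary permutations via the map $w$, and then work directly with inversion sets. Fix $\sigma\in\alpha(\sym)$, say $\sigma=(\sigma_1,\ldots,\sigma_k)$, and let $c_i=\ubar{\sigma}_i$ be the number of nonzero elements of the $i$th block. A superpermutation $K$ with $\alpha(K)=\sigma$ is determined by $\sigma$ (which fixes which blocks are fermionic and their sizes, including the blocks $\{0\}$) together with the assignment of nonzero values to blocks. Writing the nonzero entries of each block in increasing order shows that $K\mapsto w(K)$ is a bijection from $\alpha^{-1}(\{\sigma\})$ onto
\[W_\sigma=\{u\in\sym_n\mid u \text{ is increasing on each position segment } [c_1+\cdots+c_{i-1}+1,\,c_1+\cdots+c_i]\}.\]
Equivalently, $u\in W_\sigma$ if and only if $\inv(u)$ contains no pair of values occupying positions in the same segment. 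Since $\preceq_W$ on $\alpha^{-1}(\{\sigma\})$ is by definition containment of $\inv(w(\cdot))$, the map $w$ is automatically an order embedding into the left weak order. So the statement reduces to showing that $W_\sigma$ equals the interval $[w(I),w(J)]$.

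First I will identify the two candidate endpoints. For $I$ as in (1), the word $w(I)=12\cdots n$ is the identity, so $\inv(I)=\emptyset$. For $J$ as in (2), the blocks carry consecutive value ranges in decreasing order, so $\inv(J)$ consists of all pairs of values lying in different blocks. Uniqueness of $I$ and $J$ is immediate: the conditions force which consecutive run of values goes into each block, and each block's contents are written in increasing order. It follows that every $u\in W_\sigma$ satisfies $\emptyset\subseteq\inv(u)\subseteq\inv(w(J))$, since $\inv(u)$ avoids same-block pairs. Hence $\alpha^{-1}(\{\sigma\})\subseteq[I,J]$.

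For the reverse inclusion I will take $u$ with $\inv(u)\subseteq\inv(w(J))$. Then $\inv(u)$ contains no pair of values sitting in positions of a common segment. Indeed, if $u$ had a descent inside a segment, the corresponding value pair would lie in the same block of the superpermutation built from $u$, and it would be non-inverted in $w(J)$ because $w(J)$ is increasing inside blocks. Hence $u\in W_\sigma$, and $u=w(K)$ for a unique $K$ with $\alpha(K)=\sigma$. To check that the intervals in the classical left weak order coincide as posets, not merely as sets, I will also note that $W_\sigma$ is a convex subset. Each cover in the left weak order replaces $u$ by $s_iu$, swapping the adjacent values $i,i+1$. Such a swap never breaks monotonicity inside a segment, because $i$ and $i+1$ can be non-adjacent-value neighbors only with each other. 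Consequently saturated chains stay inside $W_\sigma$, and the induced order on $[I,J]$ is exactly the classical interval $[w(I),w(J)]$.

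The main obstacle is the convention check: I must make sure that ``inversion set'' means pairs of values $a<b$ with $b$ appearing before $a$, so that containment corresponds to the left weak order. Under that convention, the same-block condition (increasing positions within a segment) is precisely ``this value pair is not an inversion,'' which is what makes the argument above go through. If instead position pairs were intended, I would rerun the argument with the right weak order, replacing $s_iu$ by $us_i$.
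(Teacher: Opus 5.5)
There is a genuine gap. Your overall route matches the paper's: pass through $w$ to the set $W_\sigma$ of permutations increasing on each position segment, note $w(I)=12\cdots n$, and show $W_\sigma$ is an interval. However, the convention you commit to makes both key inclusions false.

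\textbf{Why the value-pair convention fails.} With value-pair inversions, ``same-block pairs'' is not a fixed set: which values share a block depends on $u$. By contrast, $\inv(w(J))$ is the complement of the pairs sharing a block of $J$. So ``$\inv(u)$ avoids same-block pairs'' does not give $\inv(u)\subseteq\inv(w(J))$. In the reverse direction, a descent inside a segment of $u$ produces a pair that is same-block for $u$, not necessarily for $J$. Concretely, take $\sigma=(\dot 2,1)$. The fiber $\alpha^{-1}(\{\sigma\})$ has $w$-images $123,132,231$, with $231=w(J)$. In value pairs, $\inv(132)=\{\{2,3\}\}\not\subseteq\{\{1,2\},\{1,3\}\}=\inv(231)$. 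Meanwhile $213\notin W_\sigma$ has $\inv(213)=\{\{1,2\}\}\subseteq\inv(231)$. Under your convention the statement itself is false, so no argument along these lines can succeed.

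\textbf{The fix.} Use position pairs, $\inv(u)=\{(p,q)\mid p<q,\ u(p)>u(q)\}$.
\begin{itemize}
\item This is the convention whose containment order is the classical left weak order. Value-pair containment gives the right weak order, so your fallback of switching to the right weak order is backwards.
\item Your own covers $u\mapsto s_iu$ show the mismatch: they add one position inversion but do not in general enlarge the value-inversion set. For example, $132\mapsto231$ is such a cover.
\item It is also the convention matching Figure~\ref{018}, where $132$ lies below $231$.
\end{itemize}
With position pairs, let $S_\sigma$ be the set of position pairs inside a common segment; it depends only on $\sigma$. Then $u\in W_\sigma$ if and only if $\inv(u)\cap S_\sigma=\emptyset$, and $\inv(w(J))$ is exactly the complement of $S_\sigma$. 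Hence $W_\sigma=\{u\mid\inv(u)\subseteq\inv(w(J))\}=[w(I),w(J)]$, which is precisely the paper's proof.

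\textbf{The convexity paragraph.} It is then superfluous, since both posets are ordered by inversion-set containment. As written it is also wrong: swapping values $i,i+1$ that lie in the same segment does break monotonicity, e.g.\ $123\mapsto213$ when positions $1,2$ form a segment.
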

\begin{proof}
Since $I\in\alpha^{-1}(\{\sigma\})$ and $\inv(I)=\emptyset$, it follows that $I\preceq_WK$ for all $K\in\alpha^{-1}(\{\sigma\})$. Let $n$ be the length of $w(I)$, and let $P$ be the set of all position pairs $(i,j)$ with $1\leq i<j\leq n$ that correspond to elements in different blocks of $I$. Because the map $w$ forces elements within the same block to be sorted in strictly increasing order, any $K\in\alpha^{-1}(\{\sigma\})$ must satisfy $\inv(K)\subseteq P$. By construction, $J\in\alpha^{-1}(\{\sigma\})$ and $\inv(J)=P$, which yields $K\preceq_WJ$ for all $K\in\alpha^{-1}(\{\sigma\})$. Therefore, $\alpha^{-1}(\{\sigma\})=[I,J]$.

The restriction of $w$ to $[I,J]$ is clearly an order-preserving injection, and by the definition of the super left weak order, it maps into $[w(I),w(J)]$. It suffices to show that this restricted map is surjective. Let $\tau\in[w(I),w(J)]$. Then $\inv(\tau)\subseteq\inv(w(J))=\inv(J)=P$. Because $\tau$ lacks inversions among positions corresponding to the same block, the superpermutation $K:=(\tau(I_1),\ldots,\tau(I_k))$ correctly preserves the internal increasing order of each block. Thus, $K\in\alpha^{-1}(\{\sigma\})$ and $w(K)=\tau$. Therefore, $[I,J]$ and $[w(I),w(J)]$ are isomorphic.
\end{proof}
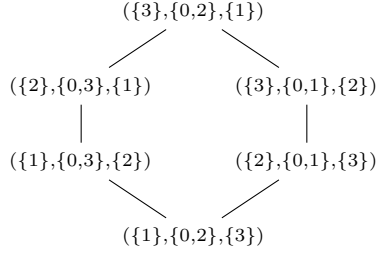
\begin{figure}[H]\centering
\begin{tikzpicture}[node distance=2cm]
\node(a)at(+0,0){$_{(\{1\},\{0,2\},\{3\})}$};
\node(b)at(-1.5,1){$_{(\{1\},\{0,3\},\{2\})}$};
\node(c)at(+1.5,1){$_{(\{2\},\{0,1\},\{3\})}$};
\node(d)at(-1.5,2){$_{(\{2\},\{0,3\},\{1\})}$};
\node(e)at(+1.5,2){$_{(\{3\},\{0,1\},\{2\})}$};
\node(f)at(+0,3){$_{(\{3\},\{0,2\},\{1\})}$};
\draw(a)--(b)--(d)--(f);\draw(a)--(c)--(e)--(f);
\end{tikzpicture}
\caption{Interval of superpermutations $I\in\sym$ with $\alpha(I)=(1,\dot{1},1)$}\label{018}
\end{figure}

\begin{remark}
The super left weak order admits an equivalent characterization in terms of the length function and the left action of the symmetric group, mirroring the classical definition. For a superpermutation $I$, we define its \emph{length} as $\len(I)=|\inv(I)|$. Note that $\len(I)=\len(w(I))$. Furthermore, for every $\sigma\in\sym_\infty$, we denote by $\sigma(I)$ the superpermutation obtained by replacing each nonzero element $x$ in the blocks of $I$ with $\sigma(x)$. Now, by definition, for any two superpermutations $I$ and $J$, we have $I\preceq_WJ$ if and only if $\alpha(I)=\alpha(J)$ and there exists a permutation $\sigma\in\sym_\infty$ such that $J=\sigma(I)$ and $\len(J)=\len(\sigma)+\len(I)$. Notice that the length condition ensures that $\sigma$ strictly adds inversions and never inverts the relative order of elements that form non-inversions in $I$.
\end{remark}

We can now introduce the monomial basis for $\sFQSym$. For each superpermutation $I$, we define the \emph{monomial free quasisymmetric function in superspace} as\[\M_I=\sum_{J\succeq_WI}\mu_W(I,J)Q_J.\]Applying the M\"{o}bius inversion formula \cite[Section~3.7]{St97} to the intervals in the poset of superpermutations, we obtain\[Q_I=\sum_{J\succeq_WI}\M_J,\]and thus the family $\{\M_I\mid I\text{ is a superpermutation}\}$ forms a linear basis for $\sFQSym$. For instance, if $I=(\{1\},\{0,2\},\{3\})$, we have\[\M_I=Q_{(\{1\},\{0,2\},\{3\})}-Q_{(\{2\},\{0,1\},\{3\})}-Q_{(\{1\},\{0,3\},\{2\})}+Q_{(\{3\},\{0,2\},\{1\})}.\]

We first study the product of monomial free quasisymmetric functions.
\begin{proposition}
Let $I$ and $J$ be two superpermutations. The product of their corresponding monomial free quasisymmetric functions is given by\[\M_I\M_J=\sum_{L\in\sym}c_{I,J}^L\,\M_L,\]where the structure coefficients $c_{I,J}^L$ are integers computed by the convolution\[c_{I,J}^L=\sum_{I'\succeq_WI}\sum_{J'\succeq_WJ}\sum_{\substack{K\in\SSh(I',J')\\K\preceq_WL}}\sgn(K)\mu_W(I,I')\mu_W(J,J').\]
\end{proposition}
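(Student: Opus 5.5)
The plan is a direct change of basis: expand both $\M_I$ and $\M_J$ in the $Q$-basis, multiply using Theorem~\ref{011}, and convert the result back to the $\M$-basis by the inverse Möbius relation $Q_K=\sum_{L\succeq_WK}\M_L$. Everything here is finite. The intervals $\{J'\mid J'\succeq_WJ\}$ lie inside the finite fibres $\alpha^{-1}(\{\alpha(J)\})$ described in Proposition~\ref{019}, and $\SSh(I',J')$ is finite, so all rearrangements of sums are legitimate.

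First, by definition,
\[\M_I\M_J=\sum_{I'\succeq_WI}\sum_{J'\succeq_WJ}\mu_W(I,I')\mu_W(J,J')\,Q_{I'}Q_{J'}.\]
Since $I'$ and $J'$ are superpermutations, Theorem~\ref{011} gives $Q_{I'}Q_{J'}=\sum_{K\in\SSh(I',J')}\sgn(K)Q_K$. By the argument in Proposition~\ref{033}, every such $K$ is again a superpermutation. Hence each $Q_K$ lies in $\sFQSym$, and we may write $Q_K=\sum_{L\succeq_WK}\M_L$.

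Substituting this, I will interchange the finite sums and collect the coefficient of each fixed $\M_L$. This coefficient is the sum over all triples $(I',J',K)$ with $I'\succeq_WI$, $J'\succeq_WJ$, $K\in\SSh(I',J')$ and $K\preceq_WL$, weighted by $\sgn(K)\mu_W(I,I')\mu_W(J,J')$. That is exactly the stated $c_{I,J}^L$.

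Integrality comes from two facts. By Proposition~\ref{019}, $\mu_W$ coincides with the classical Möbius function of intervals in the weak order, which is integer valued. Each $\sgn(K)$ is $\pm1$.

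The only step needing care is checking that the sum defining $\M_I\M_J$ is well-defined: for fixed $L$, only finitely many triples contribute, and only finitely many $L$ occur. I would argue this from the bidegrees. Each $K\in\SSh(I',J')$ has bidegree equal to the sum of the bidegrees of $I$ and $J$, since $\preceq_W$ preserves $\alpha$. So only superpermutations $L$ of that fixed bidegree appear, and there are finitely many of them.
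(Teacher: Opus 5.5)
Your proposal is correct and follows the paper's proof: expand $\M_I$ and $\M_J$ in the $Q$-basis, apply Theorem~\ref{011}, re-expand each $Q_K$ as $\sum_{L\succeq_WK}\M_L$, and collect the coefficient of each $\M_L$. Your extra checks are points the paper leaves implicit: that every $K\in\SSh(I',J')$ is again a superpermutation (so this re-expansion makes sense), finiteness via the fixed bidegree, and integrality.
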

\begin{proof}
By definition and applying Theorem~\ref{011}, we obtain\[\begin{array}{rcl}
\M_I\M_J&=&{\displaystyle\Bigl(\sum_{I'\succeq_WI}\mu_W(I,I')Q_{I'}\Bigr)\Bigl(\sum_{J'\succeq_WJ}\mu_W(J,J')Q_{J'}\Bigr)}\\[0.5cm]
&=&{\displaystyle\sum_{I'\succeq_WI}\sum_{J'\succeq_WJ}\mu_W(I,I')\mu_W(J,J')\sum_{K\in\SSh(I',J')}\sgn(K)Q_K.}\\[0.5cm]
&=&{\displaystyle\sum_{I'\succeq_WI}\sum_{J'\succeq_WJ}\mu_W(I,I')\mu_W(J,J')\sum_{K\in\SSh(I',J')}\sgn(K)\sum_{L\succeq_WK}\M_L.}
\end{array}\]Finally, interchanging the order of summation to extract the coefficient of a fixed superpermutation $L$, we group all terms where $K\preceq_W L$. This directly yields the structure coefficients $c_{I,J}^L$ as stated.
\end{proof}

To describe the coproduct of the monomial basis, we first need to introduce some concepts. For a superpermutation $I=(I_1,\ldots,I_k)$, an index $d\in[k-1]$ is called a \emph{global descent} of $I$ if for every nonzero element $a\in I_i$ with $i\leq d$, and every nonzero element $b\in I_j$ with $j>d$, we have $a>b$. We denote the set of global descents of $I$ by $\GDes(I)$. By convention, we also include $0$ and $k$ in $\GDes(I)$. Recall that the standardization of a word $u$ is the unique permutation $\std(u)$ having the same inversions as $u$. 

\begin{proposition}\label{020}
For any superpermutation $I=(I_1,\ldots,I_k)$, the coproduct of the function $\M_I$ is given by deconcatenating $I$ exclusively at its global descents:\[\Delta(\M_I)=\sum_{i\in\GDes(I)}\M_{\std(I_1,\ldots,I_i)}\otimes\M_{\std(I_{i+1},\ldots,I_k)}.\]
\end{proposition}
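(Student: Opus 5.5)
Since $\{\M_J\}$ is obtained from $\{Q_J\}$ by Möbius inversion on the super left weak order, I would prove the formula by applying $\Delta$ to $Q_I=\sum_{J\succeq_W I}\M_J$ and comparing with Proposition~\ref{013}. Concretely, I would show that
\[\sum_{J\succeq_W I}\;\sum_{d\in\GDes(J)}\M_{\std(J_1,\ldots,J_d)}\otimes\M_{\std(J_{d+1},\ldots,J_k)}=\sum_{i=0}^k Q_{\std(I_1,\ldots,I_i)}\otimes Q_{\std(I_{i+1},\ldots,I_k)}.\]
Because the $\M$-basis is triangular with respect to the $Q$-basis, and $\Delta$ is linear, this identity for every $I$ determines $\Delta(\M_I)$ uniquely by induction downward on the (finite) interval $\alpha^{-1}(\{\alpha(I)\})$, starting from its top element. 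So the claimed formula follows once the identity is verified.

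To verify it, I expand the right-hand side. For fixed $i$, write $A=\std(I_1,\ldots,I_i)$ and $B=\std(I_{i+1},\ldots,I_k)$. Then
\[Q_A\otimes Q_B=\sum_{A'\succeq_W A}\;\sum_{B'\succeq_W B}\M_{A'}\otimes\M_{B'}.\]
The key step is a bijection between two sets:
\begin{enumerate}
\item pairs $(J,d)$ with $J\succeq_W I$, $d\in\GDes(J)$, and $\alpha$ splitting at position $d$;
\item triples $(i,A',B')$ with $A'\succeq_W\std(I_1,\ldots,I_i)$ and $B'\succeq_W\std(I_{i+1},\ldots,I_k)$.
\end{enumerate}
The map sends $(J,d)$ to $i=d$, $A'=\std(J_1,\ldots,J_d)$, $B'=\std(J_{d+1},\ldots,J_k)$. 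Since $\alpha(J)=\alpha(I)$, the block structures agree, so $\alpha(A')=\alpha(\std(I_1,\ldots,I_i))$ and likewise for $B'$. The inverse takes $(i,A',B')$, shifts the values of $A'$ up by $|B'|$, and concatenates with $B'$. This produces a superpermutation $J$ having a global descent at $i$.

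The point needing care is that $J\succeq_W I$, i.e.\ $\inv(I)\subseteq\inv(J)$. Via the map $w$ and Proposition~\ref{019}, this is the classical statement from \cite[Subsection~1.3]{AgSo02}. The inversions of $w(J)$ are those of $A'$, those of $B'$, and all cross pairs, because $i$ is a global descent. The inversions of $I$ are those inside each part plus some cross pairs. Since $\std$ preserves inversions, $\inv(I)\subseteq\inv(J)$ holds exactly when $\inv(A)\subseteq\inv(A')$ and $\inv(B)\subseteq\inv(B')$. Conversely, given $(J,d)$, the relation $J\succeq_W I$ yields $A'\succeq_W A$ and $B'\succeq_W B$ by the same inversion bookkeeping.

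I expect the main obstacle to be checking that the fermionic blocks and the zeros cause no trouble. Global descents ignore zeros, and $\std$ only relabels nonzero entries, so the block cut is compatible with $\alpha$ and $w$ commutes with deconcatenation plus standardization. After that, the argument reduces verbatim to the Aguiar--Sottile proof.
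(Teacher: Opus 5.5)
Your proof is correct, but it runs in the opposite direction from the paper's.

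\textbf{The paper's route.} It expands $\Delta(\M_I)=\sum_{J\succeq_W I}\mu_W(I,J)\,\Delta(Q_J)$ and groups terms by the cut $j$ and the pair of standardized factors. It then treats the two kinds of cut separately:
\begin{enumerate}
\item for $j\notin\GDes(I)$, it transports the fiber sum of M\"obius values through the isomorphism $w$ of Proposition~\ref{019} to a classical fiber, and cites \cite[Theorem~3.1]{AgSo02} as a black box to conclude that this sum vanishes;
\item for $j\in\GDes(I)$, it factors the interval, and hence $\mu_W$, as a product.
\end{enumerate}

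\textbf{Your route.} You verify the formula on the $Q$-basis, where no M\"obius function appears. Everything reduces to the bijection $(J,d)\leftrightarrow(i,A',B')$. This is the superspace analogue of the classical criterion behind the Aguiar--Sottile argument: $w\le u\backslash v$ in the left weak order if and only if $\std(w_1\cdots w_p)\le u$ and $\std(w_{p+1}\cdots w_n)\le v$. Here $u\backslash v$ is $u$ shifted up and concatenated with $v$.

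\textbf{Comparison.} Your argument is self-contained: it reproves the classical case rather than citing it. It also works directly with blocks, so you never translate block cuts into position cuts of $w(I)$ and back. The paper's route is shorter once the classical theorem is granted.

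\textbf{Two minor points.}
\begin{enumerate}
\item The downward induction is superfluous. Once your identity holds for every $Q_I$, the linear map defined on the $\M$-basis by the right-hand side of the statement agrees with $\Delta$ on the basis $\{Q_I\}$, hence everywhere.
\item In the inverse map, the shift must be by the number of nonzero entries of $B'$, with zeros left fixed, as in the paper's $n$-shift. It is not by the number of blocks.
\end{enumerate}
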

\begin{proof}
By definition and the coproduct formula for the $Q$-basis in Proposition~\ref{013}, we have\[\Delta(\M_I)=\sum_{J\succeq_W I}\mu_W(I,J)\Delta(Q_J)=\sum_{J\succeq_W I}\mu_W(I,J)\Bigl(\,\sum_{j=0}^{k}Q_{\std(J_1,\ldots,J_j)}\otimes Q_{\std(J_{j+1},\ldots,J_k)}\Bigr).\]Recall that the condition $J\succeq_W I$ forces $\alpha(J)=\alpha(I)$, ensuring that $J$ has the exact same number of blocks and fermionic structure as $I$. Interchanging the order of summation, we can group the terms based on the resulting standardizations of the left and right factors:\[\Delta(\M_I)=\sum_{j=0}^k\sum_{A,B}\Bigl(\,\sum_{J\in S_{A,B}}\mu_W(I,J)\Bigr)Q_A\otimes Q_B,\]where $A\succeq_W\std(I_1,\ldots,I_j)$ and $B\succeq_W\std(I_{j+1},\ldots,I_k)$, and the inner sum runs over the fiber of superpermutations $S_{A,B}=\{J\succeq_WI\mid\std(J_1,\ldots,J_j)=A\text{ and }\std(J_{j+1},\ldots,J_k)=B\}$.

For each split point $j$, we analyze the inner sum. Suppose $j\notin\GDes(I)$. Let $p$ be the total number of nonzero elements in the first $j$ blocks of $I$. By Proposition~\ref{020}, the map $w$ is an order-preserving isomorphism from the super left weak order on the set $\alpha^{-1}(\{\alpha(I)\})$ to a classical interval of the left weak order. The condition that $J$ restricts to the standardizations $A$ and $B$ is entirely determined by the relative order of its elements, meaning it translates directly to the classical standardizations of the word $w(J)$ cut at position $p\leq n$, where $n$ is the number of nonzero elements in the blocks of $I$. That is, $w(\std(J_1,\ldots,J_j))=\std(w(J)|_{[1,p]})$, where $w(J)|_{[1,p]}$ corresponds to the first $p$ entries of the permutation $w(J)\in\sym_n$. Thus,
\[w(S_{A,B})=\{u_1\cdots u_n\succeq_Lw(I)\mid\std(u_1\cdots u_p)=w(A)\text{ and }\std(u_{p+1}\cdots u_n)=w(B)\}=:S_{w(I)}.\]By Proposition~\ref{019}, we obtain\[\sum_{J\in S_{A,B}}\mu_W(I,J)=\sum_{u\in S_{w(I)}}\mu(w(I),u).\]By definition, $j\notin\GDes(I)$ implies that $p\notin\GDes(w(I))$. By \cite[Theorem~3.1]{AgSo02}, the sum above over this specific fiber evaluates to zero when the cut is not a global descent. Consequently, all terms for $j\notin\GDes(I)$ vanish.

Finally, if $j\in\GDes(I)$, every nonzero element in the first $j$ blocks is strictly greater than every nonzero element in the remaining blocks. In this case, no superpermutation $J\succeq_W I$ can add inversions across the cut $j$. The interval $[I,J]$ factors as $[\std(I_1,\ldots,I_j),A]\times[\std(I_{j+1},\ldots,I_k),B]$, yielding $\mu_W(I,J)=\mu_W(\std(I_1,\ldots,I_j),A)\,\mu_W(\std(I_{j+1},\ldots,I_k),B)$. Substituting this factorization back into the sum separates the $A$ and $B$ components entirely, that is,\[\Delta(\M_I)=\sum_{j\in\GDes(I)}\underbrace{\Bigl(\sum_A\mu_W(\std(I_1,\ldots,I_j),A)Q_A\Bigr)}_{\M_{\std(I_1,\ldots,I_j)}}\otimes\underbrace{\Bigl(\sum_B\mu_W(\std(I_{j+1},\ldots,I_k),B)Q_B\Bigr)}_{\M_{\std(I_{j+1},\ldots,I_k)}}.\]This concludes the proof.
\end{proof}
For instance, if $I=(\{0,6\},\{3\},\{0,4,5\},\{1\},\{2\})$, we have $\GDes(I)=\{0,1,3,5\}$, and so\[\Delta(\M_I)=1\otimes\M_I+\M_{(\{0,1\})}\otimes\M_{(\{3\},\{0,4,5\},\{1\},\{2\})}+\M_{(\{0,4\},\{1\},\{0,2,3\})}\otimes\M_{(\{1\},\{2\})}+\M_I\otimes1.\]

\subsection{An application to fundamental quasisymmetric functions in superspace}\label{025}

There is a natural algebra homomorphism $\Q^\theta\dangle{x}\rightarrow\Q^\theta\dbrack{x}$, called the \emph{abelianization morphism}, which sends each noncommutative variable $x_i$ to its commutative counterpart and acts as the identity on the variables $\theta_i$.

We now describe the restriction of the abelianization morphism to $\sNCQSym$.
\begin{proposition}\label{014}
The abelianization morphism restricted to $\sNCQSym$ induces an algebra homomorphism $\pi:\sNCQSym\rightarrow\sQSym$. Moreover, for any set supercomposition $I$, we have $\pi(M_I)= M_{\alpha(I)}$.
\end{proposition}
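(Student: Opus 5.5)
The plan is to compute $\pi(M_I)$ directly on monomials, and then get the rest for free. The homomorphism property is immediate: the abelianization map $\Q^\theta\dangle{x}\to\Q^\theta\dbrack{x}$ is an algebra homomorphism, so its restriction to the subalgebra $\sNCQSym$ is one too. Hence the only real content is:
\begin{enumerate}
\item the formula $\pi(M_I)=M_{\alpha(I)}$;
\item the fact that the image lies in $\sQSym$, which follows from the formula by linearity, since the $M_I$ span $\sNCQSym$ and each $M_{\alpha(I)}$ belongs to $\sQSym$.
\end{enumerate}

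Fix a set supercomposition $I=(I_1,\ldots,I_k)$ of bidegree $(n,m)$. A monomial $u$ occurs in $M_I$ exactly when $I(u)=I$. Such a $u$ is determined by its index set $\ind(u)=\{c_1<\cdots<c_k\}$, so in the notation of Subsection~\ref{037} we have $u=A^I$ with $A=\{c_1,\ldots,c_k\}$. Concretely, the $r$th block $I_r$ records the positions where $x_{c_r}$ appears, and $\theta_{c_r}$ is present iff $0\in I_r$. The $\theta$'s in $u$ are already written in increasing index order.

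Now abelianize $u$:
\begin{itemize}
\item the $x$'s commute, so $x_{c_r}$ collapses to $x_{c_r}^{|I_r\setminus\{0\}|}$;
\item the $\theta$ part $\theta_{c_{r_1}}\cdots\theta_{c_{r_m}}$ with $r_1<\cdots<r_m$ is unchanged and already in normal form, so no sign appears.
\end{itemize}
Thus $\pi(u)=\theta_{c_1}^{\epsilon_1}\cdots\theta_{c_k}^{\epsilon_k}x_{c_1}^{a_1}\cdots x_{c_k}^{a_k}$, where $a_r=|I_r\setminus\{0\}|$ and $\epsilon_r=[0\in I_r]$. This is precisely the monomial $A^{\alpha(I)}$ of Subsection~\ref{034}. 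Note that $a_r+\epsilon_r\ge 1$ because blocks are nonempty, so $\alpha(I)$ is a genuine dotted composition.

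The sum is in fact a bijection onto the terms of $M_{\alpha(I)}$. The map $u\mapsto\ind(u)$ is a bijection from the monomials of $M_I$ to the $k$-subsets of $\N$. The map $A\mapsto A^{\alpha(I)}$ is a bijection from $k$-subsets to the monomials of $M_{\alpha(I)}$, all with coefficient $1$. Composing, $\pi$ sends the monomials of $M_I$ bijectively onto those of $M_{\alpha(I)}$ with coefficient $1$, which gives $\pi(M_I)=M_{\alpha(I)}$.

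The only points needing care are:
\begin{itemize}
\item that no sign arises, which holds because the normal form in $\Q^\theta\dangle{x}$ already orders the $\theta$'s increasingly and $\theta$'s commute with $x$'s;
\item that distinct noncommutative monomials with the same $I$ do not collapse to the same commutative monomial, which holds because they have distinct index sets.
\end{itemize}
I expect neither to be a real obstacle.
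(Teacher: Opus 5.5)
Your proof is correct and follows the same route as the paper. Both compute $\pi$ on each monomial $u$ of $M_I$: it becomes the commutative monomial with the same index set, exponents $|I_r\setminus\{0\}|$, and $\theta_{c_r}$ present iff $0\in I_r$. Both then invert this to get a bijection onto the terms of $M_{\alpha(I)}$. Your explicit checks (no sign arises, bijectivity via index sets, and the image landing in $\sQSym$ by linearity) make precise what the paper summarizes as ``reversing the above construction.''
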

\begin{proof}
Since $\pi$ is the restriction of the abelianization morphism, it is immediately an algebra homomorphism. Hence, it suffices to show that $\pi(M_I)=M_{\alpha(I)}$ for all set supercomposition $I=(I_1,\ldots,I_k)$. Consider a monomial $u=\theta_{i_1}\cdots \theta_{i_m}x_{j_1}\cdots x_{j_n}$ occurring in $M_I$, and let $\pi(u)$ be the image of $u$ under the abelianization morphism. Then $\pi(u)=\theta_{i_1}\cdots \theta_{i_m}x_{a_1}^{b_1}\cdots x_{a_k}^{b_k}$, where $a_1<\cdots<a_k$ are the indices of $u$, $b_t=|I_t|$ if $I_t$ is non-fermionic, and $b_t=|I_t|-1$ if $I_t$ is fermionic. Hence $\pi(u)\in M_{\alpha(I)}$. Conversely, given a monomial $w \in M_{\alpha(I)}$, one reconstructs a monomial $u \in M_I$ mapping to $w$ by reversing the above construction.
\end{proof}

\begin{proposition}\label{015}
For any superpermutation $I$, we have $\pi(Q_I)=L_{\gamma(I)}$.  
\end{proposition}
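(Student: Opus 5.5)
The plan is to combine the definition~\eqref{012} of $Q_I$ with Proposition~\ref{014} and the poset isomorphism of Proposition~\ref{009}. Since $\pi$ is linear, expanding $Q_I$ in the monomial basis and applying Proposition~\ref{014} term by term gives
\[\pi(Q_I)=\sum_{J\succeq I}\pi(M_J)=\sum_{J\succeq I}M_{\alpha(J)}=\sum_{J\in I^{\,\uparrow}}M_{\alpha(J)}.\]
The right-hand side of the statement is, by definition, $L_{\gamma(I)}=\sum_{\beta\preceq\gamma(I)}M_\beta=\sum_{\beta\in\gamma(I)^{\,\downarrow}}M_\beta$. It therefore suffices to show that $J\mapsto\alpha(J)$ is a bijection from $I^{\,\uparrow}$ onto $\gamma(I)^{\,\downarrow}$. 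Once that holds, the two sums agree term by term: each $M_\beta$ with $\beta\preceq\gamma(I)$ occurs exactly once, and no other monomial quasisymmetric function occurs.

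The required bijection is exactly the content of Proposition~\ref{009}. That result states that for a superpermutation $I$, the map $J\mapsto\alpha(J)$ is a poset isomorphism $I^{\,\uparrow}\to\gamma(I)^{\,\downarrow}$, and in particular it is bijective. No order-theoretic information is needed here, only bijectivity.

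The main point to check, and the only place where care is needed, is that Proposition~\ref{009} really supplies a bijection. Its proof rests on the observation that both posets are Boolean lattices of equal rank and that $\alpha$ is order-preserving, and that observation alone does not give injectivity or surjectivity. I would therefore verify bijectivity directly, block by block.

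Elements $J\succeq I$ keep the fermionic blocks of $I$ unchanged. Within each non-fermionic segment $[p,q]$, such a $J$ merges runs of consecutive singletons $\{a_i\}$ along ascents of $\sigma_{[p,q]}$. Such a merging is determined by the subset of ascent positions that are merged. Correspondingly, $\alpha(J)$ restricted to that segment is the composition of $q-p+1$ whose breaks are the unmerged ascents together with all descents. This composition refines $\alpha_{[p,q]}$, and distinct subsets of ascents give distinct compositions. Conversely, every composition refining $\alpha_{[p,q]}$ has breaks forming a superset of $\Des(\sigma_{[p,q]})$, so it arises from exactly one such merging.

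The dotted parts $(\dot a)$ with $a=|I_i|-1$ match the fermionic blocks exactly. Since elements of $\gamma(I)^{\,\downarrow}$ arise by independently refining each non-dotted part, as in the proof of Proposition~\ref{006}, taking the product over the segments gives the bijection and hence $\pi(Q_I)=L_{\gamma(I)}$.
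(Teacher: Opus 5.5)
Your proposal is correct and follows the same route as the paper, whose proof simply combines Proposition~\ref{014}, which gives $\pi(Q_I)=\sum_{J\succeq I}M_{\alpha(J)}$, with the bijection $I^{\,\uparrow}\to\gamma(I)^{\,\downarrow}$ of Proposition~\ref{009}. Your segment-by-segment check of that bijection is a worthwhile supplement: subsets of ascents of each $\sigma_{[p,q]}$ correspond to break sets containing $\Des(\sigma_{[p,q]})$, and every subset of ascents is realizable by successive covers because merged runs are increasing. It matters because the paper's arguments for Propositions~\ref{007} and~\ref{009} only use that $\alpha$ is order-preserving between Boolean lattices of equal rank, which by itself does not force bijectivity.
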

\begin{proof}
This is a direct consequence of Proposition~\ref{009} and Proposition~\ref{014}.
\end{proof}

The following result is analogous to \cite[Proposition~4.6]{FiGaLaPi25}.
\begin{theorem}\label{002}
Let $\alpha$ and $\beta$ be dotted compositions. Then, for any superpermutations $I$ and $J$ satisfying $\gamma(I)=\alpha$ and $\gamma(J)=\beta$, we have\[L_\alpha\,L_\beta=\sum_{K\in\SSh(I,J)}\sgn(K)\,L_{\gamma(K)}.\]Moreover, the right-hand side depends only on $\alpha$ and $\beta$.
\end{theorem}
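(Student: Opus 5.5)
The plan is to transport the product rule of Theorem~\ref{011} through the abelianization morphism. First, I would check that superpermutations $I$ and $J$ with $\gamma(I)=\alpha$ and $\gamma(J)=\beta$ exist for arbitrary dotted compositions. For each fermionic part $\dot a$ of $\alpha$, I would take a fermionic block with $a$ nonzero elements. For each maximal run of non-dotted parts, I would take a sequence of singletons whose descent composition equals that run. This uses the classical fact that every composition is the descent composition of some permutation; the values can then be relabeled globally so that the whole thing is a set supercomposition on $[n]$, since $\gamma$ depends only on relative order inside each segment.

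Next, I would apply the algebra homomorphism $\pi$ of Proposition~\ref{014} to the identity $Q_IQ_J=\sum_{K\in\SSh(I,J)}\sgn(K)Q_K$ of Theorem~\ref{011}. Proposition~\ref{015} gives $\pi(Q_I)=L_\alpha$ and $\pi(Q_J)=L_\beta$. As noted in the proof of Proposition~\ref{033}, every $K\in\SSh(I,J)$ is again a superpermutation, so Proposition~\ref{015} also gives $\pi(Q_K)=L_{\gamma(K)}$. Multiplicativity of $\pi$ then yields the displayed formula directly.

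For the independence claim, the left-hand side $L_\alpha L_\beta$ involves only $\alpha$ and $\beta$, and the right-hand side equals it for every admissible choice of $I$ and $J$. So the right-hand side, read as an element of $\sQSym$, depends only on $\alpha$ and $\beta$. This requires no combinatorial bijection between super-shuffle sets for different choices. If one wants the multiset of pairs $(\sgn(K),\gamma(K))$ itself to be independent, one can further invoke linear independence of the $L_\gamma$; this follows by triangularity of $L_\gamma=\sum_{\delta\preceq\gamma}M_\delta$ with respect to the $M$-basis, and gives independence of the signed coefficient of each $L_\gamma$.

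The main obstacle is verifying that the ingredients apply cleanly. There are two points to check. The first is that $\pi$ is genuinely multiplicative with the signs from anticommuting $\theta$'s, which it is, since it is the identity on the $\theta$ variables. The second is that super-shuffles of superpermutations remain superpermutations, so that Proposition~\ref{015} applies to every $Q_K$. The latter holds because mergers in $\SSh$ only absorb non-fermionic blocks into fermionic ones, so every non-fermionic block of $K$ is still a singleton. I would also double-check the existence construction in the first step in the edge case where a non-fermionic run is empty or $\alpha$ contains $\dot 0$ parts, which correspond to blocks $\{0\}$.
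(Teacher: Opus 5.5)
Your proposal is correct and is essentially the paper's proof: apply the algebra homomorphism $\pi$ to the identity of Theorem~\ref{011}, use Proposition~\ref{015} for $Q_I$, $Q_J$ and every $Q_K$, and read the independence claim off the left-hand side $L_\alpha L_\beta$. Your extra checks are points the paper uses only implicitly: that suitable $I,J$ exist, and that each $K\in\SSh(I,J)$ is again a superpermutation so Proposition~\ref{015} applies. One small caveat: linear independence of the $L_\gamma$ gives independence of the net signed coefficient of each $L_\gamma$, not of the multiset of pairs $(\sgn(K),\gamma(K))$ itself.
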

\begin{proof}
Since $\pi$ is an algebra homomorphism, Proposition~\ref{015} yields $\pi(Q_IQ_J)=\pi(Q_I)\pi(Q_J)=L_{\alpha}L_{\beta}$. On the other hand, using the product formula in Theorem~\ref{011}, and applying again Proposition~\ref{015}, we obtain\[\pi(Q_IQ_J)=\pi\Bigl(\sum_{K\in\SSh(I,J)}\sgn(K)\,Q_K\Bigr)=\sum_{K\in\SSh(I,J)}\sgn(K)\,\pi(Q_K)=\sum_{K\in\SSh(I,J)}\sgn(K)\,L_{\gamma(K)}.\]
Comparing both expressions concludes the proof.
\end{proof}
Note that when no dotted parts occur, the formula in Theorem~\ref{002} reduces to the classical product of fundamental quasisymmetric functions~\cite[Equation (3.13)]{LuMyWi13}.

For instance, if $\alpha=\gamma(I)=(2,\dot{1})$ and $\beta=\gamma(J)=(2)$, where $I=(\{1\},\{2\},\{0,3\})$ and $J=(\{1\},\{2\})$, we have\[\SSh(I,J)=\left\{\begin{array}{cccc}
_{(\{1\},\{2\},\{0,3\},\{4\},\{5\})},&_{(\{1\},\{2\},\{0,3,4\},\{5\})},&_{(\{1\},\{2\},\{0,3,4,5\})},&_{(\{1\},\{2\},\{4\},\{0,3\},\{5\})},\\
_{(\{1\},\{2\},\{4\},\{0,3,5\})},&_{(\{1\},\{4\},\{2\},\{0,3\},\{5\})},&_{(\{1\},\{4\},\{2\},\{0,3,5\})},&_{(\{4\},\{1\},\{2\},\{0,3\},\{5\})},\\
_{(\{4\},\{1\},\{2\},\{0,3,5\})},&_{(\{1\},\{2\},\{4\},\{5\},\{0,3\})},&_{(\{1\},\{4\},\{2\},\{5\},\{0,3\})},&_{(\{1\},\{4\},\{5\},\{2\},\{0,3\})},\\
_{(\{4\},\{1\},\{2\},\{5\},\{0,3\})},&_{(\{4\},\{1\},\{5\},\{2\},\{0,3\})},&_{(\{4\},\{5\},\{1\},\{2\},\{0,3\})}
\end{array}\right\},\]and so\[\begin{array}{rcl}L_\alpha L_\beta&=&L_{(2,\dot{1},2)}+L_{(2,\dot{2},1)}+L_{(2,\dot{3})}+L_{(3,\dot{1},1)}+L_{(3,\dot{2})}+L_{(2,1,\dot{1},1)}+L_{(2,1,\dot{2})}+L_{(1,2,\dot{1},1)}+L_{(1,2,\dot{2})}\\[0.15cm]&&\,+L_{(4,\dot{1})}+L_{(2,2,\dot{1})}+L_{(3,1,\dot{1})}+L_{(1,3,\dot{1})}+ L_{(1,2,1,\dot{1})}+L_{(2,2,\dot{1})}.\end{array}\]

\section*{Concluding remarks}

The theory of combinatorial Hopf superalgebras in superspace remains an active area of development. The construction of the algebra of free quasisymmetric functions in superspace suggests that other fundamental combinatorial Hopf algebras, such as the Loday--Ronco and Solomon descent algebras, may admit natural extensions to this setting. Exploring these extensions could provide further insight into the combinatorics of superspace and potentially clarify structural connections with Lie superalgebras.

We also emphasize that our construction relies on one of the two partial orders on dotted compositions introduced in~\cite[Definition~5.12, Equation~(5.18)]{FiLaPi19}. The alternative order, which allows for more intricate interactions between dotted and non-dotted components, remains less understood and represents a promising avenue for future exploration.

In the context of symmetric functions, fundamental quasisymmetric functions in superspace can be viewed as a refinement of Schur-type structures. However, a satisfactory definition of Schur quasisymmetric functions in superspace and Schur functions in $\sNCSym$, for instance, via a Jacobi--Trudi-type determinant, remains an open problem.

Quasisymmetric functions are closely linked to descent-type statistics and the combinatorics of permutations, including structures arising from the weak and Bruhat orders. In our setting, the partial order introduced in Subsection~\ref{005} depends solely on the non-fermionic components of a superpermutation. Notably, fermionic blocks do not affect the descent structure encoded by this order. This suggests that fermionic components offer a means to refine combinatorial structures while preserving descent-related information, potentially providing a robust framework for studying permutation statistics compatible with such orders.

\subsection*{Acknowledgments}

The first named author acknowledges the financial support of DIDULS/ULS, through
the project PR2553853. The second named author was partially supported by the grant ANID-FONDECYT Iniciaci\'on No. 11241418. The third named author acknowledges the financial support of Fondo de Apoyo a la Investigaci\'on DIUA309-2025.

\bibliographystyle{plainurl}\bibliography{bibtex.bib}
\end{document}